\numberwithin{equation}{section} 
\newcommand{\citeSta}[1]{\cite[Tag \href{https://stacks.math.columbia.edu/tag/#1}{#1}]{Stacks}}
\newtheorem{theorem}{Theorem}[section]
\newtheorem{lemma}[theorem]{Lemma}
\newtheorem{proposition}[theorem]{Proposition}
\newtheorem{corollary}[theorem]{Corollary}
\newtheorem{maintheorem}{Main Theorem}
\newtheorem{question}{Question}
\newtheorem{Problem}{Problem}
\newtheorem{conjecture}[theorem]{Conjecture}
\theoremstyle{definition}
\newtheorem{definition}[theorem]{Definition}
\newtheorem{remark}[theorem]{Remark}
\theoremstyle{remark}
\newtheorem{example}[theorem]{Example}
\newcommand{\im}{\operatorname{Im}}
\newcommand{\Ker}{\operatorname{Ker}}
\newcommand{\Spec}{\operatorname{Spec}}
\newcommand{\id}{\operatorname{id}}
\newcommand{\Hom}{\operatorname{Hom}}
\newcommand{\coker}{\operatorname{coker}}
\newcommand{\Pic}{\operatorname{Pic}}
\newcommand{\Frac}{\operatorname{Frac}}
\newcommand{\ob}{\operatorname{ob}}
\newcommand{\Nr}{\operatorname{Norm}}
\newcommand{\Spf}{\operatorname{Spf}}
\newcommand{\Alb}{\operatorname{Alb}}
\newcommand{\prarrow}[2]{\ar@<0.5ex>[r]^-{#1}\ar@<-0.5ex>[r]_-{#2}} \newcommand{\plarrow}[2]{\ar@<0.5ex>[l]^-{#1}\ar@<-0.5ex>[l]_-{#2}} \newcommand{\pdarrow}[2]{\ar@<0.5ex>[d]^-{#1}\ar@<-0.5ex>[d]_-{#2}} \newcommand{\puarrow}[2]{\ar@<0.5ex>[u]^-{#1}\ar@<-0.5ex>[u]_-{#2}}
\newcommand{\coloneq}{\coloneq}
\begin{document}
\title[Quasi-canonical lifting of projective varieties in positive characteristic]
{Quasi-canonical lifting of projective varieties in positive characteristic}

\author[R. Ishizuka]{Ryo Ishizuka}
\address{Department of Mathematics, Institute of Science Tokyo, 2-12-1 Ookayama, Meguro, Tokyo 152-8551, Japan}
\email{ishizuka.r.ac@m.titech.ac.jp}

\author[K. Shimomoto]{Kazuma Shimomoto}
\address{Department of Mathematics, Institute of Science Tokyo, 2-12-1 Ookayama, Meguro, Tokyo 152-8551, Japan}
\email{shimomotokazuma@gmail.com}

\dedicatory{Dedicated to Prof. Kei-ichi Watanabe on the occasion of his 80th birthday}

\thanks{2020 {\em Mathematics Subject Classification\/}: 13A35, 13B05, 13B35, 14G45}

\keywords{Abelian varieties, Calabi-Yau variety, canonical lifting, cotangent complex, $p$-adic formal schemes, Serre-Tate theory}


\begin{abstract}
The main aim of this article is to give new classes of smooth projective varieties over characteristic \(p>0\) that admit flat liftings over the Witt vectors together with additional data (logarithmic structure and the Frobenius morphism) by showing a descending property of such Frobenius liftability. We establish a refined form of the classical result due to Mehta-Srinivas on the existence of canonical liftings. For this purpose, we also establish a result on the algebraization of certain $p$-adic formal schemes.
\end{abstract}

\maketitle

\setcounter{tocdepth}{3}
\tableofcontents

\section{Introduction}

The aim of this paper is to investigate the problem of finding new classes of smooth projective varieties over a perfect field of positive characteristic that admit a flat lifting over the Witt vectors. We also study the logarithmic version and possibility of lifting the Frobenius morphism over the Witt vectors. As an application, we give some constructions of Noetherian rings of mixed characteristic $p>0$ admitting a ring map which reduces to the $p$-th power map modulo $p$ (so-called \emph{p-torsion free $\delta$-ring} in the recent literature). A typical case is when the ring $R$ is an unramified complete regular local ring. We will investigate singular examples of Noetherian rings with expected applications to singularities of arithmetic varieties.

Let us start with a review on the deformation theory of Abelian varieties of characteristic $p>0$ by Serre-Tate and its conceptual generalization by Mehta-Nori-Srinivas. If $A$ is an Abelian variety over a field $k$ of characteristic $p>0$ and $R$ is a local Artinian ring with residue field $k$, then the classical theory of Serre-Tate asserts that the deformation theory of $A$ along $\Spec(k) \to \Spec(R)$ can be read off from the corresponding data in the associated $p$-divisible group $A[p^\infty]$, which encodes a linear algebra structure of Abelian varieties (see \cite[Theorem 1.2.1]{Ka81}). This theory can be strengthened if moreover $A$ is assumed to be \emph{ordinary} (see \cite[Theorem 2.1]{Ka81} and \cite[Corollary (1.2), p. 177]{Me72} in the case of ordinary Abelian varieties). Recently, this theory has been extended over a more general scheme (see \cite{BorGur19} and \cite{BorGur20}). The authors of \cite{MS87} consider the following situation. Let $X$ be a smooth projective variety over a perfect field $k$ of characteristic $p>0$ whose cotangent bundle is trivial (equivalently the tangent bundle, which is dual to the cotangent bundle, is trivial). If $X$ is ordinary, then there is a distinguished class in the set of all flat liftings of $X$ and Frobenius lifts over the Witt vectors $W(k)$; it is called a \emph{canonical lifting} of $X$ which is characterized as a unique lifting of $(X,F_X)$ over which the Frobenius morphism lifts. On the other hand, if $\mathcal{X}$ is a smooth projective scheme flat over $W(\overline{\mathbb{F}}_p)$ and the Kodaira dimension of $\mathcal{X}$ is positive, then Dupuy \cite{Dup14} proved that $\mathcal{X}$ does not admit a lift of the Frobenius morphism on the special fiber $\mathcal{X} \times_{W(\overline{\mathbb{F}}_p)} \overline{\mathbb{F}}_p$. This is in stark contrast with the case when $X$ has a canonical lifting. Indeed, those varieties with canonical liftings are limited. The main result of \cite{MS87} asserts that if $X$ is ordinary with a trivial cotangent bundle, then $X$ has a finite Galois covering $Y \to X$ of $p$-power degree such that $Y$ is an ordinary Abelian variety.\footnote{In contrast, if the base field is of characteristic 0, then the Albanese mapping $X \to \Alb(X)$ is an isomorphism and thus, $X$ is an Abelian variety.}  We recall the following conjectures (see \cite[Conjecture (at \S 1) and Proposition 3.1.2]{AZ21} for example).

\begin{conjecture}[Achinger-Zdanowicz]
\label{FrobWittLift}
Let $X$ be a smooth projective variety defined over an algebraically closed field $k$ of characteristic $p>0$. Assume that $X$ is globally Frobenius-split with trivial canonical class. Then there is a flat deformation of $X$ over the Witt vectors $W(k)$ (not only \(W_2(k)\)).
\end{conjecture}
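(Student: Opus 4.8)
The plan is to reduce the statement to a formal lifting problem and then algebraize. Fix an ample line bundle $L$ on $X$. By the algebraization result established in this paper it suffices to produce a compatible system of flat liftings $(\mathcal{X}_n, \mathcal{L}_n)$ of the polarized pair $(X, L)$ over $W_n(k)$ for every $n \geq 1$, because such a system assembles into a $p$-adic formal scheme $\widehat{\mathcal{X}}$ over $\Spf W(k)$ carrying an ample line bundle, hence is the completion of a projective scheme flat over $W(k)$ with special fibre $X$. The problem thus becomes to arrange that the successive obstructions vanish: given a lift over $W_n(k)$, the obstruction to lifting $X$ to $W_{n+1}(k)$ lies in $H^2(X, T_X)$, and the obstruction to then lifting $L$ lies in $H^2(X, \mathcal{O}_X)$. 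Since $\omega_X \cong \mathcal{O}_X$ we have $T_X \cong \Omega^{n-1}_X$, so the controlling group is $H^2(X, \Omega^{n-1}_X)$, which is in general nonzero (it is $h^{n-1,2}$); a naive vanishing argument therefore fails, and the Frobenius splitting must be used in an essential way.

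For the base case, by \cite[Proposition 3.1.2]{AZ21} a globally Frobenius-split $X$ with trivial canonical class admits a flat lifting over $W_2(k)$, and since $\kappa(X) = 0$ Dupuy's obstruction to lifting the Frobenius morphism on the special fibre does not apply, so nothing formal excludes going further. Moreover, because $\omega_X$ is trivial, Grothendieck duality for the relative Frobenius gives $\Hom_{\mathcal{O}_X}(F_*\mathcal{O}_X, \mathcal{O}_X) \cong H^0(X, \omega_X^{1-p}) = k$, so the Frobenius splitting of $\mathcal{O}_X$ is unique up to scalar and is therefore a canonical datum attached to $X$. This canonicity is the input that the main constructions of this paper require in order to upgrade the bare $W_2(k)$-lift to a Frobenius-liftability statement in the strong (logarithmic, polarized) sense.

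To pass beyond $W_2(k)$ I would use the two mechanisms supplied by this paper. First, the \emph{descending property of Frobenius liftability}: the canonical section of $F_*\mathcal{O}_X$ above, together with the constraints that $\omega_X \cong \mathcal{O}_X$ and Frobenius-splitness place on $\pi_1(X)$ and on $H^\bullet(X, \mathcal{O}_X)$, should produce a finite covering $\pi \colon Y \to X$ along which $Y$ acquires a trivial---or, after the refinement, a sufficiently controlled---cotangent bundle; the refined \emph{Mehta-Srinivas theorem} then endows $Y$ with a quasi-canonical lifting, i.e. a compatible system of flat liftings over $W_n(k)$ carrying Frobenius lifts and the polarization $\pi^{*}L$; and the descending property transports this liftability back down to $(X, L)$, after which the algebraization result of the first step concludes. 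An alternative route bypasses the covering: establish a characteristic-$p$ Bogomolov-Tian-Todorov unobstructedness theorem for Frobenius-split $X$ with $\omega_X \cong \mathcal{O}_X$, by a $T^{1}$-lifting induction powered by the Frobenius lift modulo $p^2$ from the base case, which would make the deformation space of $(X, L)$ formally smooth over $W(k)$ and hand over the required system directly.

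The hard part will be the construction and control of the covering $Y \to X$. The classical Mehta-Srinivas theorem demands a genuinely trivial cotangent bundle, which a general Frobenius-split Calabi-Yau variety will not acquire on any finite cover, so everything rests on the refined version and on verifying its hypotheses for the cover extracted from the Frobenius splitting; matching the output of the $F$-splitting to the input of the refined theorem is the delicate point. The deformation-theoretic alternative is no easier: the $T^1$-lifting method relies on divided-power structures and degeneration inputs that are fragile in characteristic $p$, so pushing the Kawamata-Ran induction through for a Frobenius-split Calabi-Yau variety is itself a serious obstacle. Making either route unconditional is where the real work lies.
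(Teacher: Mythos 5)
This statement is \Cref{FrobWittLift}, which the paper states as an open \emph{conjecture} of Achinger--Zdanowicz, not as a theorem. The paper does not prove it; it only offers partial results toward it (Main Theorem \ref{mtheorem1} is explicitly described as ``a partial answer to Conjecture \ref{FrobWittLift} \ldots in the logarithmic setting,'' under the additional hypothesis $(\natural)$ that a prime-to-$p$ finite \'etale cover with a quasi-canonical lifting and certain cohomological vanishings exists). Your write-up is therefore not comparable to a paper proof that does not exist, and it is, by your own admission, not a proof either: you flag that ``making either route unconditional is where the real work lies.''

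Beyond that global point, the specific route you sketch has two concrete obstructions. First, the Mehta--Srinivas theorem (and the paper's refinement, \Cref{mtheorem1}) require a finite \'etale cover with a trivial, or at least numerically flat, \emph{(co)tangent bundle}, which is strictly stronger than $\omega_X \cong \mathcal{O}_X$; a generic globally Frobenius-split Calabi--Yau variety (e.g. an ordinary K3, or an ordinary Calabi--Yau threefold) does not acquire a trivial cotangent bundle on any finite cover, so the covering $Y$ you hope to ``extract from the Frobenius splitting'' simply does not exist in general. Second, even where the Mehta--Srinivas cover $Y \to X$ by an ordinary Abelian variety does exist, its degree is a power of $p$ (the paper says so explicitly in recalling \cite{MS87}), whereas the paper's descent machinery --- \Cref{algquotient}, \Cref{SplittingLemma}, and \Cref{mtheorem1} --- all require the degree $[K(Z):K(X)]$ to be \emph{prime to $p$}, because they depend on the normalized trace $-\tfrac{1}{d}\mathrm{Tr}_f$ splitting $\mathcal{O}_X \to f_*\mathcal{O}_X$. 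So the descent mechanism in this paper cannot be applied to the very cover Mehta--Srinivas produces. Your second route via a characteristic-$p$ Bogomolov--Tian--Todorov / $T^1$-lifting theorem is likewise open; the paper cites \cite{BT24} for precisely such unobstructedness results, but those require additional ordinarity and torsion-freeness hypotheses not implied by ``globally $F$-split with $\omega_X$ trivial.'' In short, the proposal is a research program aligned with the paper's direction, not a proof, and its two proposed bridges both fail as stated.
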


If \(X\) is dimension two or if \(X\) is a finite \'etale quotient of an Abelian variety, then the conjecture (and a much stronger result) is proved in \cite[Theorem 1.3 and Proposition 4.12]{BBKW22}. Another intriguing conjecture is the following (see \cite[Conjecture 1]{AWZ21}).

\begin{conjecture}[Achinger-Witaszek-Zdanowicz]
\label{FrobWittLift2}
Let $X$ be a smooth projective variety defined over an algebraically closed field $k$ of characteristic $p>0$ with the Frobenius morphism $F_X:X \to X$. Assume that the pair $(X,F_X)$ admits a flat lifting over $W_2(k)$. Then there exists a finite \'etale Galois cover $f:Y \to X$ such that the Albanese morphism $Y \to \Alb(Y)$ admits a structure of a toric fibration. If $X$ is simply conneted, then $X$ is a toric variety.
\end{conjecture}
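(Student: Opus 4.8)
The plan is to attack Conjecture~\ref{FrobWittLift2} by exploiting the strong rigidity that a lift of the Frobenius morphism modulo $p^2$ imposes, in the spirit that $F$-liftable varieties are ``almost toric''. First I would record the basic linear-algebra consequence of a lifting $\widetilde{F}$ of $F_X$ over $W_2(k)$: the construction $x \mapsto \widetilde{F}(x)/p$ on local functions induces an $\mathcal{O}_X$-linear map $F_X^{*}\Omega^1_{X/k} \to \Omega^1_{X/k}$, and dually a splitting of the Cartier operator $C\colon F_{X\ast}\Omega^1_{X/k} \to \Omega^1_{X/k}$ (equivalently, a splitting of the inclusion of exact $1$-forms into $F_{X\ast}\Omega^1_{X/k}$). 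This already forces $X$ to be globally $F$-split and yields Bott-type vanishing, so that $H^1(X,\mathcal{O}_X)$ behaves as for an ordinary abelian variety; in particular $\Alb(X)$ is ordinary and the Albanese morphism $a_X\colon X \to \Alb(X)$ is tame enough to analyze fiberwise.

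Next I would run a Mehta--Srinivas type argument on the Albanese. Using the Serre--Tate canonical lift of the ordinary abelian variety $\Alb(X)$ together with the compatibility of $\widetilde{F}$ with it after pullback along $a_X$, one produces a finite \'etale Galois cover $Y \to X$ of $p$-power degree --- obtained by pulling back a suitable isogeny of $\Alb(X)$ --- over which the Albanese map $a_Y\colon Y \to \Alb(Y)$ is put into ``canonical'' position and the lifted Frobenius is intertwined with multiplication by $p$ on the base, exactly as in \cite{MS87}. This reduces the problem to a relative statement over a fixed ordinary abelian base, which is the precise analogue of the role played by Abelian varieties in the absolute theorem of \cite{MS87}.

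The heart of the matter is then local over $\Alb(Y)$: one must show that the fibers of $a_Y$, equipped with the induced Frobenius lift, are toric, and that these toric fibers organize into a toric fibration. Here I would combine the local structure theory of $F$-liftable affines --- a lift of Frobenius modulo $p^2$ on a smooth affine produces, Zariski-locally, \'etale coordinates compatible with the induced $\delta$-structure, hence a toric chart --- with its relative version over the abelian base, and then invoke the algebraization theorem for $p$-adic formal schemes established earlier in this paper to upgrade the infinitesimal/formal toric data into an honest toric fibration in the sense of \cite{AWZ21}. The simply connected case is the degenerate one: then $\Alb(Y)=0$ and $Y=X$, so $X$ is covered by toric charts that are forced to glue into a toric variety.

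The step I expect to be the main obstacle is precisely this globalization: passing from the \emph{infinitesimal} toric structure detected by a single Frobenius lift mod $p^2$ to a \emph{global} toric fibration requires controlling the monodromy of the toric charts and excluding nontrivial twisted forms, and it is here that one seems to need either the full strength of $F$-liftability over all of $W(k)$ (or a bootstrapping argument producing it) together with serious algebraization input. Accordingly the conjecture is open in general; the contribution of the present paper is to supply the descent/ascent mechanism for Frobenius liftability and the algebraization of $p$-adic formal schemes that such a program requires, together with the unconditional special cases (trivial canonical class, \'etale quotients of Abelian varieties), refining the Mehta--Srinivas picture accordingly.
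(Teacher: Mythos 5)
The statement you were asked to address is labeled \emph{Conjecture}~\ref{FrobWittLift2} in the paper, and the paper does \emph{not} prove it; it is quoted from \cite{AWZ21} as motivation, with only the surface case and the $F$-liftable toric-fibration framework cited from \cite{AWZ23} and \cite{AWZ21}. Your proposal correctly recognizes this and does not claim a proof --- you explicitly identify the globalization step (upgrading Zariski/\'etale-local toric charts to a global toric fibration structure on $a_Y\colon Y\to\Alb(Y)$, controlling monodromy and twisted forms) as the open obstacle. So there is no ``paper's own proof'' to compare against, and your assessment that the conjecture remains open is accurate.

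A few remarks on the heuristic you sketch. The opening linear-algebra step is sound and standard: a lift $\widetilde{F}$ over $W_2(k)$ gives via $x\mapsto \widetilde{F}(x)/p$ an $\mathcal{O}_X$-linear map $F_X^{*}\Omega^1_{X/k}\to\Omega^1_{X/k}$, hence a splitting of the Cartier operator, global $F$-splitting, and ordinarity of $\Alb(X)$; this is all in \cite{AWZ21} and aligns with Lemma~\ref{ordinarysplit} and Nakkajima's theorem in the present paper. The Mehta--Srinivas--style passage to a $p$-power \'etale cover pulled back from an isogeny of the Albanese is also the right framework (it is the content of \cite[Theorem 5.1.1]{AWZ21} together with \cite{MS87}). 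But be careful with the sentence asserting that a Frobenius lift mod $p^2$ on a smooth affine ``produces, Zariski-locally, \'etale coordinates compatible with the induced $\delta$-structure, hence a toric chart.'' That local statement is vacuous: every smooth variety is \'etale-locally $\mathbb{A}^n$, which is toric, and every smooth affine admits a Frobenius lift. The content of Conjecture~\ref{FrobWittLift2} is entirely in the global coherence --- the existence of a torus action on the fibers of $a_Y$ that is compatible across charts --- and nothing about a single mod-$p^2$ lift supplies that, as you ultimately acknowledge. The algebraization and descent results of this paper (Main Theorem~\ref{algquotient}, Proposition~\ref{liftcanonical}) are indeed the kind of machinery such a program would need, but by themselves they do not touch the monodromy/gluing problem, which is where the conjecture currently stands open.
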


Note that a complete classification of $F$-liftable smooth projective surfaces over $W_2(k)$ is proved in \cite[Theorem 6.9]{AWZ23}. See also \cite{Xin16}. Although we are interested in the lifting problem over $W(k)$, we hope our methods to shed light on \Cref{FrobWittLift2}. Our guiding principle is the following problem.\footnote{Note that an answer to the converse direction of \Cref{DescentProblem} is given in \Cref{liftcanonical}.}

\begin{Problem}
\label{DescentProblem}
Let $X \to Y$ be a finite \'etale surjection of varieties over a perfect field $k$ of characteristic $p>0$ with the ring of Witt vectors $W(k)$. Assume that $X$ has a flat lifting $\mathcal{X}/W(k)$ with a morphism $\widetilde{F}_X:\mathcal{X} \to \mathcal{X}$ that lifts the absolute Frobenius on $X$. Then is it true that $Y$ satisfies the same properties?
\end{Problem}

\subsection{Descending property of liftings}

In order to study schemes over the Witt vectors, it is necessary to consider two main themes. The first one is the existence of $p$-adic formal schemes. The second one is the algebraization problem. First, we prove a fundamental result on algebraizations over the Witt vectors for $p$-adic formal schemes arising from (singular) proper varieties over an algebraically closed field of characteristic $p>0$ without assuming cohomological data.

\begin{maintheorem}[Algebraization of $p$-adic formal schemes]
\label{algquotient}
Let \(k\) be a perfect field of characteristic \(p > 0\). Set $S_n \coloneqq \Spec(W(k)/p^{n}W(k)) = \Spec(W_n(k))$ (so in particular, $S_1= \Spec(k)$). Let \(f:X \to Y\) be a surjective finite \'etale morphism of integral proper varieties over \(k\) such that the degree $d:=[K(X):K(Y)]$ is not divisible by \(p\).
Then the following assertions hold.
\begin{enumerate}
\item
Assume the following condition:
\begin{enumerate}
\item[$\bullet$] If $X = X_1 \to X_2 \to \cdots \to X_k$ is any sequence of schemes such that $X_j$ is a flat $S_j$-scheme and $X_j \cong X_{j+1} \times_{S_{j+1}} S_j$ for any $1 \le j \le k$, then there is a morphism $X_k \to X_{k+1}$ such that $X_{k+1}$ is a flat $S_{k+1}$-scheme and $X_{k} \cong X_{k+1} \times_{S_{k+1}} S_k$.
\end{enumerate}
Then there is a $p$-adic formal scheme $Y = Y_1 \to Y_2 \to \cdots$. In other words,  Every $Y_n$ is a flat $S_n$-scheme and $Y_n \cong Y_{n+1} \times_{S_{n+1}} S_n$ for all $n \ge 1$. Moreover, there is a $p$-adic formal scheme $X=Z_1 \to Z_2 \to \cdots$ that fits into a commutative diagram of schemes:
\[ 
\xymatrix@M=10pt{ 
X \ar[r]\ar[d]_f&Z_2 \ar[r]\ar[d]_{f_2}&Z_3 \ar[d]_{f_3}\ar[r] &\cdots&\\
Y \ar[r]&Y_2 \ar[r]& Y_3 \ar[r] &\cdots \\
}
\]
such that the following properties are satisfied.
\begin{enumerate}
\item Each square 
\[ 
\xymatrix@M=10pt{ 
Z_j \ar[r]\ar[d]_{f_j}&Z_{j+1} \ar[d]_{f_{j+1}}&\\
Y_j \ar[r]&Y_{j+1} &\\
}
\]
is a pullback diagram in the category of schemes. For each $j>1$, $Z_j \to Y_j$ is a morphism of $S_j$-schemes and uniquely determined by the data $f \colon X \to Y$.

\item
Let $d=[K(X):K(Y)]$ be as in the hypothesis. Each morphism $Z_n \to Y_n$ is a surjective finite \'etale morphism of constant degree  $d$ for all $n \geq 1$.\footnote{Any finite \'etale map \(f:X \to Y\) of schemes can be written as \(\bigsqcup_{j=1}^N Y \to Y\) \'etale locally on the target by \citeSta{04HN}. We say that \(f\) has \emph{constant degree \(d\)} if the number \(N\) is constantly \(d\).}
\end{enumerate}
\item
Assume that there exists a scheme $\mathcal{Z}$ that is flat and projective over $W(k)$ and gives an algebraization of the \(p\)-adic formal scheme $\{Z_n\}_{n \geq 1}$ taken in (1) (in particular, $X$ is a projective scheme over $k$). Then $Y_n$ is a projective scheme flat over $S_n$ and the colimit $\varinjlim_n Y_n$ admits an algebraization $\mathcal{Y}$ which is a projective scheme flat over $W(k)$, and $\mathcal{Z} \to \mathcal{Y}$ is a finite \'etale surjective morphism whose reduction along $\Spec(k) \to \Spec(W(k))$ is $X \to Y$.
\end{enumerate}
\end{maintheorem}

The next result gives a partial answer to Conjecture \ref{FrobWittLift} (together with Frobenius lifts) in the logarithmic setting, which also gives a substantial variation of the results of Mehta-Nori-Srinivas on the existence of the canonical lifting of ordinary projective varieties with trivial cotangent bundle \cite{MS87}. Let $(X,D,F_X)$ be a triple, where $X$ is a smooth proper variety over a perfect field $k$ of characteristic $p>0$ together with a normal crossing divisor $D$, and $F_X$ is the Frobenius morphism on $X$. A quasi-canonical lifting of \((X, D, F_X)\) over \(W(k)\) is a triple $(\mathcal{X},\mathcal{D},\widetilde{F}_X)$, where \(\mathcal{X} \to \Spec(W(k))\) is a flat surjective proper morphism \(\mathcal{X} \to \Spec(W(k))\) whose closed fiber is \(X\), $\mathcal{D} \subseteq \mathcal{X}$ is a divisor with normal crossings relative to $\Spec(W(k))$ such that $D=\mathcal{D}|_X$ along the closed immersion $X \hookrightarrow \mathcal{X}$, and \(\widetilde{F}_X:\mathcal{X} \to \mathcal{X}\) is a morphism lifting the Frobenius morphism $F_X$ such that $\widetilde{F}^*_X\mathcal{D}=p\mathcal{D}$ (see \Cref{canolift} below).

\begin{maintheorem}
\label{mtheorem1}
Let $(X,D)$ be a smooth projective nc pair over an algebraically closed field $k$ of characteristic $p>0$. Consider the following condition:
\begin{enumerate}
\item[$(\natural)$]
There is a surjective finite \'etale morphism $f:Z \to X$ such that the nc pair $(Z,D_Z:=f^{*}D)$ admits a quasi-canonical lifting $(\mathcal{Z},\mathcal{D}_Z,\widetilde{F}_{Z})$ over the Witt vectors $W(k)$ (see \Cref{canolift}),\footnote{In the proof of this theorem, we can show that \((\mathcal{Z}, \widetilde{F}_Z)\) is the canonical lifting over \(W(k)\), i.e., this is uniquely determined.} the vanishing holds: $H^0(Z,T_{Z}(-\log D_Z) \otimes B\Omega^1_Z)=H^1(Z,T_{Z}(-\log D_Z) \otimes B\Omega^1_Z)=0$, where $T_Z(-\log D_Z)$ is dual to the logarithmic cotangent bundle $\Omega^1_X(\log D_Z)$ with logarithmic poles on $D_Z$, and the degree $[K(Z):K(X)]$ is not divisible by $p$.
\end{enumerate}

Then we have the following assertions:

\begin{enumerate}
\item
$(X,D)$ admits the canonical lifting $(\mathcal{X},\mathcal{D},\widetilde{F}_X)$ over $W(k)$ and a finite \'etale surjective morphism \(\widetilde{f} \colon \mathcal{Z} \to \mathcal{X}\) compatible with $(\mathcal{Z},\mathcal{D}_Z,\widetilde{F}_Z)$.
\item
Denote by $\Pic(\mathcal{X}) \to \Pic(X)$ the map of Picard groups induced by the closed immersion $X \hookrightarrow \mathcal{X}$. Set $P\coloneqq \{\mathcal{L} \in \Pic(\mathcal{X})~|~\widetilde{F}_X^*(\mathcal{L}) \cong \mathcal{L}^p\}$. Then $P$ is a subgroup of $\Pic(\mathcal{X})$ and the composite mapping  $P \hookrightarrow \Pic(\mathcal{X}) \to \Pic(X)$ is an isomorphism.
\end{enumerate}
\end{maintheorem}

The condition $(\natural)$ is fulfilled (at least over $W_2(k)$) when $Z$ is globally Frobenius-split and the logarithmic tangent bundle $T_Z(-\log D_Z)$ is trivial after \cite[Theorem 5.1.1]{AWZ21} (if the degree condition is satisfied). Moreover, if \(Z\) can be taken as an ordinary Abelian variety, then we can prove a functoriality of canonical liftings in \Cref{AbVarLiftWitt}. In the same proposition, we give a proof of the existence of quasi-canonical liftings of finite \'etale quotients of ordinary Abelian varieties over \(W(k)\) without the degree assumption. This is claimed in \cite[Example 3.1.4 and Remark 3.1.6]{AWZ21} without proof.

The proof of \Cref{mtheorem1} relies on descent of quasi-canonicity along \'etale morphisms and the deformation theory of formal schemes via cotangent complexes as well as \Cref{algquotient}. Along the way, we prove that the quasi-canonical property ascends along a finite \'etale morphism in a compatible manner (see \Cref{liftcanonical}), which is of independent interest. The main results in this paper will be applied in the construction of singularities in mixed characteristic $p>0$ in \cite{ISh25}.

\subsection*{Acknowledgement}
The authors would like to thank Shou Yoshikawa for providing useful comments.

\section{A review of lifting of Frobenius morphisms}

\subsection{Quasi-canonical lifting}
We give a review of the theory of canonical liftings of projective varieties with its Frobenius morphism. Let $X$ be an $\mathbb{F}_p$-scheme and let $F_X:X \to X$ denote the absolute Frobenius morphism. If $f:X \to Y$ is a morphism of $\mathbb{F}_p$-schemes, then there is a commutative diagram of $\mathbb{F}_p$-schemes:
\begin{center}
\begin{tikzcd}
X
\arrow[drr, bend left, "F_X"]
\arrow[ddr, bend right, "f"]
\arrow[dr, dotted,  "F_{X/Y}"description] & & \\
& X^{(1)} \arrow[r, "F_Y^{(1)}"] \arrow[d]
& X \arrow[d, "f"] \\
& Y \arrow[r, "F_Y"]
& Y
\end{tikzcd}
\end{center}
where the square is cartesian. We say that $F_{X/Y}$ is the \emph{relative Frobenius morphism} of $X/Y$. In what follows, we also write $F$ for $F_X$. Let $k$ be a perfect field of characteristic $p>0$. Recall that a variety $X$ over $k$ is \emph{globally Frobenius-split} if the natural $\mathcal{O}_X$-linear map $\mathcal{O}_X \to F_*\mathcal{O}_X$ splits. Assume that $X$ is a smooth variety over $k$ and let $\Omega^1_X$ be a locally free $\mathcal{O}_X$-module consisting of K\"ahler differential 1-forms. We also write $\Omega^1_X$ for $\Omega^1_{X/k}$ if no confusion is likely to occur. We have the de Rham complex which consists of sheaves of differential forms $\{\Omega^i_X\}_{i \ge 0}$ and the differential maps $d^i:\Omega^i_X \to \Omega^{i+1}_X$. By pushing forward along the Frobenius $F:X \to X$, we get a system of coherent $\mathcal{O}_X$-modules $\{F_*\Omega^i_X\}_{i \ge 0}$. Let
$$
B\Omega^i_X\coloneqq \im \big(F_*d^{i-1}:F_*\Omega^{i-1}_X \to F_*\Omega^{i}_X\big)~\mbox{and}~Z\Omega^i_X\coloneqq \Ker \big(F_*d^{i}:F_*\Omega^{i}_X \to F_*\Omega^{i+1}_X\big).
$$
Then there is an exact sequence of locally free $\mathcal{O}_X$-modules
\begin{equation}
\label{CartierExact1}
  0 \to B\Omega^i_X \to Z\Omega^i_X \xrightarrow{C} \Omega^i_X \to 0,
\end{equation}
where $C$ is the Cartier operator. This induces an isomorphism known as ``Cartier isomorphism'' $\bigoplus_{i \ge 0}\mathcal{H}^i((F_{X/k})_*\Omega_{X/k}^\bullet) \cong \bigoplus_{i \ge 0} \Omega^i_{X^{(p)}/k}$. After applying $\Hom_{\mathcal{O}_X}(-,\Omega^n_X)$ to $(\ref{CartierExact1})$ for $i=n\coloneqq \dim X$, we obtain the fundamental exact sequence
\begin{equation}
\label{CartierExact2}
0 \to \mathcal{O}_X \to F_* \mathcal{O}_X \to B\Omega^1_X \to 0
\end{equation}
because of \(Z\Omega_X^n = \Ker(F_*\Omega_X^n \to 0) = F_*\Omega_X^n\).
There is another exact sequence
\begin{equation}
\label{CartierExact3}
0 \to Z\Omega^i_X \to F_*\Omega^i_X \to B\Omega^{i+1}_X \to 0.
\end{equation}
Let $\omega_X\coloneqq \bigwedge^n \Omega^1_{X}$ be the canonical sheaf of $X$. This is an invertible sheaf. We recall the ordinarity condition after Bloch-Kato \cite{BK86} and Illusie-Raynaud \cite{IR83} following \cite[Definition 8.8]{BT24}.

\begin{definition}
\label{ordinaryDef}
Let \(n \geq 0\). We say that a smooth projective variety over $k$ is \emph{\(n\)-ordinary} if $H^i(X,B\Omega^j_X)=0$ for all $i \ge 0$ and $j \leq n$. If $X$ is \(n\)-ordinary for all \(n \geq 0\), then we say that \(X\) is \emph{ordinary}, namely, \(H^i(X, B\Omega_X^j) = 0\) for all \(i \geq 0\) and \(j \geq 0\).
\end{definition}

The next lemma is a variation of \cite[Lemma 1.1]{MS87}.

\begin{lemma}[Mehta-Srinivas]
\label{ordinarysplit}
Suppose $X$ is a smooth projective variety over an algebraically closed field $k$ of characteristic $p>0$. Assume that $\dim_k\Gamma(X,\omega_X)=1$ and there exists a surjective finite \'etale morphism $f \colon Y \to X$ such that $\omega_Y$ is trivial. Then the following conditions are equivalent.
\begin{enumerate}
\item
$X$ is globally Frobenius-split.

\item
$X$ is ordinary.

\item
The Frobenius action on $H^{\dim X}(X,\mathcal{O}_X)$ is bijective.

\item
The exact sequence $0 \to B\Omega^1_X \to Z\Omega^1_X \xrightarrow{C} \Omega^1_X \to 0$ splits as $\mathcal{O}_X$-modules.

\item
\((X, F)\) has a lifting \((X_2, F_2)\) over \(W_2(k)\) (in the sense of \Cref{canolift}).
\end{enumerate}
\end{lemma}

\begin{proof}
We prove that the canonical sheaf $\omega_X$ is trivial. Since $Y \to X$ is finite \'etale, there are isomorphisms $\mathcal{O}_Y \cong \omega_Y \cong f^*\omega_X$. By assumption, there is a non-zero global form $\alpha \in \Gamma(X,\omega_X)$ whose pullback to $Y$ is everywhere non-vanishing. Since $Y \to X$ is surjective, $\alpha$ defines a trivialization $\omega_X \cong \mathcal{O}_X$. Now the proof of the lemma can be found in \cite[Proposition 3.1.2]{AZ21}, \cite[Lemma 1.1]{MS87}, and the work of Nori--Srinivas in \cite[Appendix: Canonical liftings]{MS87} (see also \cite[Proposition 3.3.1 (c)]{AWZ21}).
\end{proof}

The notion of (quasi-)canonical liftings will play a central role. There seems to be several different versions in the literature of (quasi-)canonical liftings depending on the purpose. Here, we employ the following definition.

\begin{definition}
\label{canolift}
Let $X$ be a scheme over a perfect field $k$ of characteristic $p>0$. 
\begin{enumerate}
\item
A scheme $\mathcal{X}$ is said to be a \emph{flat lifting} of $X$ over $W(k)$ if there is a flat surjective morphism $f:\mathcal{X} \to \Spec(W(k))$ such that the closed fiber of $f$ is isomorphic to $X$ as a \(k\)-scheme. If \(\mathcal{X}\) is projective over \(W(k)\), we say that \(\mathcal{X}\) is a \emph{projective flat lifting} of \(X\).

\item
A pair $(\mathcal{X},\widetilde{F}_X)$ is a \emph{quasi-canonical lifting} of $X$ if $\mathcal{X}$ is a flat lifting of $X$ and $\widetilde{F}_X:\mathcal{X} \to \mathcal{X}$ is a lifting of the absolute Frobenius $F_X: X \to X$ for which the diagram
\[
\xymatrix@M=10pt{
\mathcal{X} \ar[r]^{\widetilde{F}_X} \ar[d] &\mathcal{X} \ar[d]\\
\Spec(W(k)) \ar[r]^{\widetilde{F}_k} & \Spec(W(k)) \\
}
\]
commutes, where $\widetilde{F}_k$ is the unique lifting of the Frobenius map on $\Spec(k)$. If 
$(\mathcal{X},\widetilde{F}_X)$ exists uniquely up to isomorphism, then we say that it is a \emph{canonical lifting}. We call $\widetilde{F}_X$ a \emph{Frobenius lifting} of $F_X$. 
\end{enumerate}
\end{definition}

Note that Definition \ref{canolift} extends mutatis mutandis to flat lifting of $X$ over $W_n(k)$ with $n \in \mathbb{N}$. We will tacitly assume the following.
\begin{enumerate}
\item[$\bullet$]
When we consider the case that $X$ is proper, then a quasi-canonical lifting $f:\mathcal{X} \to \Spec(W(k))$ is assumed to be a \textit{proper} morphism. 
\end{enumerate}
If $\mathcal{X}$ is a projective scheme over $\Spec(W(k))$, then we say that $(\mathcal{X},\widetilde{F}_X)$ is a  \emph{projective quasi-canonical lifting}. Instead of working with the absolute Frobenius morphism $F_X:X \to X$, one can define a lifting of the relative Frobenius morphism $F_{X/k}:X \to X^{(1)}$ over $W(k)$ (or over $W_n(k)$). Indeed, the universality of the diagram defining $F_{X/k}$ implies that these two liftings are essentially equivalent. Notice that $F_{X/k}$ is a morphism of $k$-schemes, while $F_X$ is not.

\subsection{Logarithmic differentials}

We follow \cite[Definition 2.3.1]{AWZ21} for the following definition.

\begin{definition}
Let \(S\) be a scheme and let $X$ be a smooth $S$-scheme. An \textit{nc pair} over $S$ is a pair $(X,D)$ such that $D \subseteq X$ is a divisor with normal crossings relative to $S$.
\end{definition}

Recall that $D \subseteq X$ is a divisor with normal crossings relative to $S$ if \'etale-locally on $X$ there is an \'etale morphism $h:X \to \mathbb{A}_S^n$ such that $D=h^*(\{x_1 \cdots x_n=0\})$, where $x_1,\ldots,x_n$ is the standard coordinate functions on $\mathbb{A}_S^n$. We have a logarithmic variant of Definition \ref{canolift}. For instance, we can make the following.

\begin{definition}
Let $(X,D,F_X)$ be a triple, where $X$ is a smooth proper variety over a perfect field $k$ of characteristic $p>0$ and $(X,D)$ is an nc pair. Then the triple $(\mathcal{X},\mathcal{D},\widetilde{F}_X)$ is a \textit{quasi-canonical lifting} if $(\mathcal{X},\widetilde{F}_X)$ is a quasi-canonical lifting of $(X,F_X)$ in the sense of Definition \ref{canolift}, $\mathcal{D} \subseteq \mathcal{X}$ is a divisor with normal crossings relative to $\Spec(W(k))$ such that $\mathcal{D}|_X=D$ and $\widetilde{F}_X^*(\mathcal{D})=p\mathcal{D}$.
\end{definition}

Let $(X,D)$ be an nc pair for a smooth variety over $k$. We define the \textit{logarithmic tangent sheaf} as the subsheaf
$$
T_X(-\log D) \subseteq T_X
$$
that consists of those derivatives that preserve the ideal sheaf $\mathcal{O}_X(-D)$. The \textit{logarithmic cotangent sheaf} $\Omega_X^1(\log D)$ is defined as the dual sheaf of $T_X(-\log D)$. These are locally free sheaves on $X$. The sheaf $T_X(-\log D)$ can be described as follows. Fix a point $x \in X$ and let $x_1,\ldots,x_n$ be a system of local coordinate functions on $\mathcal{O}_{X,x}$. Assume for simplicity that $D=\{x_1 \cdots x_n=0\}$ (without pulling back from the \'etale coordinate). Then a local basis at $x \in X$ of $T_X(-\log D)$ is given by the set: $x_1 \partial_1,\ldots,x_n \partial_n$, where $\partial_1,\ldots,\partial_n$ is a dual basis of $dx_1,\ldots,dx_n$ of $\Omega^1_X$.

We have the following result (see \Cref{liftcanonical} for the preservation of quasi-canonicity under \'etale morphisms).

\begin{lemma}
\label{CanonicalUnique}
Let $X$ be a smooth proper variety over a perfect field of characteristic $p>0$. If $X$ admits a quasi-canonical lifting and $H^0(X,T_X \otimes B\Omega_X^1)=0$, then it is canonical. Moreover, if $(X,D)$ is an nc pair admitting a quasi-canonical lifting such that $H^0(X,T_X(-\log D) \otimes B\Omega_X^1)=0$, then it is canonical.
\end{lemma}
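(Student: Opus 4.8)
The plan is to prove uniqueness of the quasi-canonical lifting by a standard obstruction-theoretic argument: show that if $(\mathcal{X},\widetilde{F}_X)$ and $(\mathcal{X}',\widetilde{F}'_X)$ are two quasi-canonical liftings (over $W(k)$, or inductively over $W_n(k)$), then they are isomorphic, and moreover the isomorphism is unique, so that the automorphism group of a quasi-canonical lifting is trivial. The key point is that the difference of two liftings of a fixed pair $(X,F_X)$ from $W_n(k)$ to $W_{n+1}(k)$ is controlled by a cohomology group built out of the tangent sheaf $T_X$ together with the ``Frobenius'' twist, and the relevant group is precisely $H^0(X, T_X \otimes B\Omega_X^1)$ (resp. its logarithmic analogue $H^0(X, T_X(-\log D) \otimes B\Omega_X^1)$ in the nc pair case).

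First I would set up the deformation theory: the flat liftings of $X$ from $S_n = \Spec(W_n(k))$ to $S_{n+1}$ form a torsor under $H^1(X, T_X)$ (or $H^1(X, T_X(-\log D))$ in the log case, using that $\mathcal{D}$ is a relative normal crossings divisor), while the automorphisms of such a lifting over its reduction form a group isomorphic to $H^0(X, T_X)$. Imposing the Frobenius lift condition cuts this down: a Frobenius lifting $\widetilde{F}_X$ modulo $p^{n+1}$, reducing to a fixed one modulo $p^n$, exists and its ambiguity is measured using the exact sequence $0 \to \mathcal{O}_X \to F_*\mathcal{O}_X \to B\Omega_X^1 \to 0$ from \eqref{CartierExact2}; the relevant observation (due to Mehta--Srinivas, and in the form used by Achinger--Witaszek--Zdanowicz) is that the obstruction/ambiguity spaces for lifting the pair $(X, F_X)$ are governed by $H^i(X, T_X \otimes B\Omega_X^1)$ rather than by $H^i(X, T_X)$ alone, because the Frobenius constraint forces the deformation class to lie in the image of $T_X \otimes \mathcal{O}_X \cong T_X \hookrightarrow T_X \otimes F_*\mathcal{O}_X$ modulo $T_X$, i.e., in $T_X \otimes B\Omega_X^1$. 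Concretely, I would argue inductively on $n$: given an isomorphism of quasi-canonical liftings modulo $p^n$, the set of isomorphisms modulo $p^{n+1}$ lifting it (compatibly with the Frobenius lifts) is either empty or a torsor under $H^0(X, T_X \otimes B\Omega_X^1)$; the vanishing hypothesis makes this group zero, so the lift is unique when it exists, and a separate (standard) argument shows it does exist because the two liftings $(\mathcal{X},\widetilde{F}_X)$ and $(\mathcal{X}',\widetilde{F}'_X)$ are abstractly isomorphic modulo $p^{n+1}$. Passing to the limit over $n$ and invoking formal GAGA / the algebraization already available in this setting gives the isomorphism over $W(k)$.

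The hard part will be pinning down precisely which cohomology group controls the ambiguity of compatible isomorphisms of the pair $(\mathcal{X},\widetilde{F}_X)$ — i.e., verifying carefully that the Frobenius-compatibility condition intertwines the $H^0(X,T_X)$-action on automorphisms with the action coming from $F_*\mathcal{O}_X$ in such a way that only the subquotient $H^0(X, T_X \otimes B\Omega_X^1)$ survives. This is exactly the computation carried out in the appendix of \cite{MS87} and in \cite[\S3]{AWZ21}, so I would cite those rather than redo it, but I would need to check that their argument, stated for trivial (log) tangent bundle, goes through verbatim under the weaker hypothesis that only $H^0$ of the twisted sheaf vanishes. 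For the logarithmic statement, the additional point is that an isomorphism of liftings must also respect $\mathcal{D}$ and $\mathcal{D}'$; since both restrict to $D$ and both satisfy $\widetilde{F}_X^*\mathcal{D}=p\mathcal{D}$, the relevant deformations are those of the pair, controlled by $T_X(-\log D)$, and the identical obstruction analysis applies with $T_X$ replaced by $T_X(-\log D)$ throughout. Finally I would remark that this lemma is exactly what is invoked, in combination with \Cref{liftcanonical} and \Cref{algquotient}, to upgrade ``quasi-canonical'' to ``canonical'' in \Cref{mtheorem1}.
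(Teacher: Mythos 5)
Your argument is essentially the paper's: both reduce the statement to the formal deformation result of Mehta--Srinivas (Appendix, Proposition~1 of \cite{MS87}; logarithmic version in \cite[Variant 3.3.2]{AWZ21}), which says the vanishing of $H^0(X,T_X\otimes B\Omega_X^1)$ (resp.\ its log analogue) forces the $p$-adic formal lifting $(\mathcal{X}^\wedge,\widetilde{F}_X^\wedge)$ of $(X,F_X)$ to be unique, and then conclude by uniqueness of algebraization of proper formal schemes (\cite[Corollary 8.4.7]{I05}). The paper simply cites these results outright rather than re-sketching the obstruction-theoretic torsor computation as you do, but there is no substantive difference in method.
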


\begin{proof}
Let $(\mathcal{X},\widetilde{F}_X)$ be a quasi-canonical lifting of $X$ and let $(\mathcal{X}^{\wedge},\widetilde{F}_X^{\wedge})$ be the $p$-adic completion. Then by \(H^0(X, T_X \otimes B\Omega_X^1) = 0\), $(\mathcal{X}^{\wedge},\widetilde{F}_X^{\wedge})$ is the unique formal lifting of $(X,F_X)$ along $\Spf(W(k))$ in view of \cite[Appendix: Canonical liftings, Proposition 1]{MS87}. But the pair $(\mathcal{X},\widetilde{F}_X)$ is the unique algebraization of the $p$-adic formal scheme $(\mathcal{X}^{\wedge},\widetilde{F}_X^{\wedge})$ by \cite[Corollary 8.4.7]{I05}. We refer to \cite[Variant 3.3.2]{AWZ21} for the logarithmic case.
\end{proof}

We note the following fact.

\begin{lemma}
\label{pullbacknc}
Let $(X,D)$ be an nc pair over $S$ and let $f:Y \to X$ be a smooth morphism. Then $(Y,f^*D)$ is an nc pair over $S$.
\end{lemma}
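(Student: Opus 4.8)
The plan is to reduce immediately to the étale-local picture, since "divisor with normal crossings relative to $S$" is an étale-local condition on the source. Given the nc pair $(X,D)$ over $S$, we may work étale-locally on $X$ and assume there is an étale morphism $h \colon X \to \mathbb{A}^n_S$ with $D = h^*(\{x_1\cdots x_n = 0\})$. After further étale base change on $Y$ (which is harmless, as normal crossings relative to $S$ is étale-local on $Y$ as well), we want to produce, locally on $Y$, an étale morphism $Y \to \mathbb{A}^m_S$ pulling the coordinate hyperplane divisor back to $f^*D$.

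First I would recall the structure theory of smooth morphisms: étale-locally on $Y$, the smooth morphism $f \colon Y \to X$ factors as $Y \xrightarrow{g} \mathbb{A}^r_X \to X$ with $g$ étale (for instance by \citeSta{054L} or the standard fact that a smooth morphism is étale-locally standard smooth). Composing $g$ with $h \times \mathrm{id} \colon \mathbb{A}^r_X \to \mathbb{A}^r_{\mathbb{A}^n_S} = \mathbb{A}^{n+r}_S$ and noting that $h \times \mathrm{id}$ is étale (base change of an étale map) gives an étale morphism $\tilde h \colon Y \to \mathbb{A}^{n+r}_S$. With coordinates $x_1,\dots,x_n$ on the $\mathbb{A}^n_S$ factor and $y_1,\dots,y_r$ on the $\mathbb{A}^r$ factor, the composite $\tilde h$ satisfies $\tilde h^*(\{x_1\cdots x_n = 0\}) = g^*((h\times\mathrm{id})^*\{x_1\cdots x_n=0\}) = g^*(\{x_1\cdots x_n = 0\}$ pulled back to $\mathbb{A}^r_X) = f^*(h^*\{x_1\cdots x_n=0\}) = f^*D$, using that the relevant squares commute. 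This exhibits $(Y, f^*D)$ étale-locally in the required normal-form, so $f^*D$ is a divisor with normal crossings relative to $S$; together with smoothness of $Y \to S$ (composite of smooth morphisms $Y \to X \to S$), this shows $(Y, f^*D)$ is an nc pair over $S$.

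The one point that needs a little care — and which I expect to be the only genuine obstacle — is checking that $f^*D$ is again a \emph{Cartier divisor} (i.e.\ that the pullback of the effective divisor makes sense and is still cut out by a non-zero-divisor), and more precisely that pulling back commutes with the étale localizations in the way asserted. Since $f$ is flat, pullback of Cartier divisors is well-behaved: $f^*D$ is an effective Cartier divisor on $Y$ because flatness preserves the property that the defining local equation is a non-zero-divisor. The compatibility of the various pullbacks is then just functoriality of $h \mapsto h^*$ along the commuting square $Y \to \mathbb{A}^r_X \to \mathbb{A}^r_{\mathbb{A}^n_S}$, with $X \to \mathbb{A}^n_S$ the map $h$; no cohomological input is needed. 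Once these bookkeeping points are dispatched, the statement follows.
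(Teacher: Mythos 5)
Your proof is correct and takes essentially the same route as the paper: the paper's proof is a one-line citation to Stacks Project Tag 0CBQ together with the remark that the argument works in the relative setting, and your writeup spells out precisely that argument (reduce to the \'etale-local normal form of an nc pair, use the \'etale-local factorization $Y \to \mathbb{A}^r_X \to X$ of a smooth morphism, and compose with $h \times \mathrm{id}$ to land in $\mathbb{A}^{n+r}_S$). One tiny stylistic note: with the paper's definition as literally stated, the target chart should be read as allowing $D$ to be the pullback of $\{x_1 \cdots x_m = 0\}$ for some $m \le$ (relative dimension), which is exactly the form your étale map $Y \to \mathbb{A}^{n+r}_S$ produces, so nothing is actually missing.
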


\begin{proof}
The proof of \citeSta{0CBQ} works in the relative setting.
\end{proof}

It makes sense to define and consider a flat lifting (or quasi-canonical lifting) of $k$-algebras in the same manner as in \Cref{canolift}.

\begin{example}
Here is a non-trivial example of a quasi-canonical lifting of algebras. Let $A$ be a smooth algebra of finite type over a perfect field of characteristic $p>0$, let $A^{(1)}:=A \otimes_{k,F_k} k$ and let $F_{A/k}:A^{(1)} \to A$ be the relative Frobenius map on $A$ over \(k\). By the main results of Arabia (just combine \cite[Th\'eor\`eme 3.3.2 and Th\'eor\`eme 3.3.4]{Ara01}) which improves the results of Elkik, there is a $p$-adically complete smooth flat $W(k)$-algebras $\mathcal{A}^{(1)}$ and $\mathcal{A}$ together with a $W(k)$-algebra map $\widetilde{F}_{X/k}:\mathcal{A}^{(1)} \to \mathcal{A}$ lifting $F_{A/k}$. This implies that any smooth variety can be lifted locally in the Zariski topology (see also \cite[Example 3.1.1]{AWZ21}). This is also a consequence of Serre vanishing for affine schemes and \cite[Appendix: Canonical liftings, Proposition 1]{MS87}.
\end{example}

The ordinarity condition is necessary for the existence of Frobenius lifting, as the following theorem shows (see \cite[Theorem 1.2]{Nak97}).

\begin{theorem}[Nakkajima]
Let $X$ be a smooth proper scheme over a perfect field \(k\) of characteristic $p>0$. Assume that $X$ admits a smooth flat lifting $\mathcal{X}_1$ over $W_2(k)$ and a lifting $\widetilde{F}_{X,1}:\mathcal{X}_1 \to \mathcal{X}_1$ of the Frobenius morphism $F_X:X \to X$. Then $X$ is ordinary. In particular, if $X$ admits a quasi-canonical lifting, then $X$ is ordinary.    
\end{theorem}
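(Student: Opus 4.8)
The plan is to first reduce the last sentence to the main assertion, and then follow the strategy of Nakkajima \cite{Nak97}: use the Frobenius lift to produce Deligne--Illusie-type sections of the Cartier operators, and then bootstrap the resulting structure on de Rham (and de Rham--Witt) cohomology into the vanishing of \Cref{ordinaryDef}. For the reduction, if $(\mathcal{X},\widetilde{F}_X)$ is a quasi-canonical lifting of $X$ over $W(k)$ in the sense of \Cref{canolift}, put $\mathcal{X}_1 := \mathcal{X}\times_{\Spec(W(k))}\Spec(W_2(k))$; this is flat over $W_2(k)$ with closed fibre the smooth $k$-scheme $X$, hence smooth over $W_2(k)$ (smoothness being fibrewise over a flat base), and the reduction of $\widetilde{F}_X$ is a lift $\widetilde{F}_{X,1}\colon\mathcal{X}_1\to\mathcal{X}_1$ of $F_X$, so the last sentence follows from the first.

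Now suppose $\mathcal{X}_1$ and $\widetilde{F}_{X,1}$ are as in the hypothesis. First I would run the Deligne--Illusie computation: since $\mathcal{X}_1$ is flat over $W_2(k)$ one has $p\mathcal{O}_{\mathcal{X}_1}\cong\mathcal{O}_X$, and since $\widetilde{F}_{X,1}$ reduces modulo $p$ to $F_X$, which annihilates K\"ahler differentials, $d\widetilde{F}_{X,1}$ is divisible by $p$ and $\psi := \tfrac{1}{p}\,d\widetilde{F}_{X,1}\bmod p$ is a well-defined section of the Cartier operator $C$ in $(\ref{CartierExact1})$ for $i=1$. Taking exterior powers and using that both the de Rham complex and $C$ are multiplicative, the maps $\psi_i:=\bigwedge^i\psi$ are sections of $C$ in $(\ref{CartierExact1})$ for every $i$ --- and here, crucially, no bound on $\dim X$ is required, since with a \emph{global} Frobenius lift one merely wedges a fixed $\mathcal{O}_X$-linear map rather than patching local lifts or invoking divided powers. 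Consequently $\bigoplus_i\psi_i$ exhibits $\bigoplus_i\Omega^i_X[-i]\to F_*\Omega^\bullet_X$ as a quasi-isomorphism compatible with the naive filtration $\sigma_{\ge\bullet}$ and the canonical filtration $\tau_{\le\bullet}$, which induce respectively the Hodge and the conjugate filtrations on hypercohomology; passing to hypercohomology, the Hodge and the conjugate spectral sequences of $X$ degenerate, and on each $H^n_{\mathrm{dR}}(X/k)$ the Hodge filtration and the conjugate filtration are split by one and the same decomposition, hence are transverse.

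It remains to deduce $H^i(X,B\Omega^j_X)=0$ for all $i,j$. Combining the degenerations just obtained with the long exact sequences of $(\ref{CartierExact1})$, $(\ref{CartierExact2})$, and $(\ref{CartierExact3})$ and inducting on $j$, one reduces this to the statement that each inclusion $Z\Omega^j_X\hookrightarrow F_*\Omega^j_X$ is an isomorphism on cohomology in every degree; the sections $\psi_i$ together with a dimension count deliver this, after which ordinarity --- equivalently, Newton polygon $=$ Hodge polygon in every cohomological degree --- is literally the vanishing of \Cref{ordinaryDef}. The step I expect to be the main obstacle is exactly this last one: the mod-$p^2$ datum only yields formality of the de Rham complex (hence the degenerations and the transversality of the two filtrations), and bridging from there down to the sheaf-level vanishing of \emph{all} the $H^i(X,B\Omega^j_X)$ requires controlling the $p$-torsion in crystalline cohomology and working with the de Rham--Witt complex, via the Illusie--Raynaud analysis \cite{IR83} of the slope spectral sequence; for that part I would simply invoke \cite{Nak97}.
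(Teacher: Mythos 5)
The paper does not prove this theorem at all --- it is stated with a citation to \cite[Theorem 1.2]{Nak97} and nothing more --- so your proposal, which sets up the Deligne--Illusie splittings and then explicitly defers to \cite{Nak97} for the deduction of ordinarity, is consistent with the paper's treatment and in fact more informative. Your reduction of the ``in particular'' clause to the $W_2(k)$-statement is correct, and the construction of the global Cartier sections $\psi_i := \bigwedge^i\bigl(\tfrac{1}{p}d\widetilde{F}_{X,1}\bigr)$ is correct; in particular you are right that a \emph{global} Frobenius lift removes the $\dim X < p$ restriction of the usual Deligne--Illusie patching argument.

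One caution about an intermediate remark: the suggestion that ``the sections $\psi_i$ together with a dimension count deliver'' the isomorphisms $H^\bullet(Z\Omega^j_X) \xrightarrow{\sim} H^\bullet(F_*\Omega^j_X)$ does not work as stated. The Deligne--Illusie decomposition $\bigoplus_i \Omega^i_X[-i] \simeq F_*\Omega^\bullet_X$ splits $(\ref{CartierExact1})$ and forces degeneration of both spectral sequences, but it does not split $(\ref{CartierExact3})$, and the long exact sequence arising from $(\ref{CartierExact3})$ shows that $H^\bullet(Z\Omega^j_X) \to H^\bullet(F_*\Omega^j_X)$ being an isomorphism is \emph{equivalent} to $H^\bullet(X,B\Omega^{j+1}_X)=0$, not a reduction of it. Already in the base case $j=0$ the desired isomorphism is bijectivity of the Frobenius on $H^i(X,\mathcal{O}_X)$ for all $i$, which is not a formal consequence of the decomposition (it fails, for instance, on supersingular abelian varieties, which in small dimension do admit Deligne--Illusie decompositions arising from $W_2(k)$-lifts of the variety alone). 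The Frobenius lift must therefore be exploited more essentially --- via its interaction with the de Rham--Witt complex and the slope spectral sequence of Illusie--Raynaud --- and that is precisely the content of Nakkajima's proof. Since you already flag this as the main obstacle and invoke \cite{Nak97} for it, your overall proposal is sound; just do not expect the ``dimension count'' to close the gap.
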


We have a fundamental result on quasi-canonical liftings.

\begin{lemma}
\label{FlatCanonical}
Let $k$ be a perfect field of characteristic $p>0$. Then the Frobenius lifting $\widetilde{F}_k:\Spec(W(k)) \to \Spec(W(k))$ is an isomorphism. Moreover, if $(\mathcal{X}, \widetilde{F}_X)$ is a quasi-canonical lifting of a smooth proper variety $X$ over $k$, then $\widetilde{F}_X:\mathcal{X} \to \mathcal{X}$ is a finite flat morphism and $\mathcal{X}$ is a smooth proper scheme over $W(k)$.
\end{lemma}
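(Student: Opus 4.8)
The plan is to prove the three assertions — that $\widetilde F_k$ is an isomorphism, that $\mathcal X$ is smooth proper over $W(k)$, and that $\widetilde F_X$ is finite flat — in that order, since the smoothness of $\mathcal X$ will make the last claim almost formal. For the first assertion, recall that $k$ is perfect, so the Frobenius $F_k\colon k \to k$, $a \mapsto a^{p}$, is a bijection, hence a ring isomorphism; the canonical Frobenius lift $\widetilde F_k$ is by construction $\Spec$ of the map $W(F_k)\colon W(k) \to W(k)$ induced by functoriality of Witt vectors (this does lift $F_k$ because $W(k)/pW(k)=k$ functorially), and since the Witt functor carries isomorphisms to isomorphisms, $\widetilde F_k$ is an isomorphism.

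For the smoothness of $\mathcal X$, the main tool — which I will reuse twice more — is the following propagation principle: since $\mathcal X \to \Spec(W(k))$ is proper and $W(k)$ is local, any point of $\mathcal X$ that is closed in $\mathcal X$ maps to the closed point, hence lies in the special fiber $X$; consequently \emph{any open subscheme of $\mathcal X$ containing $X$ is all of $\mathcal X$}, because its complement is closed and, if nonempty, would contain a point closed in $\mathcal X$, i.e.\ a point of $X$. I then apply this to the smooth locus of the structure morphism $\mathcal X \to \Spec(W(k))$: this locus is open (the morphism is flat and of finite presentation) and contains $X$ (as $\mathcal X$ is $W(k)$-flat and $X$ is $k$-smooth), so it is everything. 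Thus $\mathcal X$ is smooth over $W(k)$; together with the standing properness hypothesis this gives that $\mathcal X$ is smooth proper over $W(k)$, hence regular and — being connected, since $X$ is connected and $\mathcal X$ is proper over the local ring $W(k)$ — integral.

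For the finite flatness of $\widetilde F_X$: it is a morphism of proper $W(k)$-schemes (via the isomorphism $\widetilde F_k$), hence proper, and it preserves the special and generic fibers because $\widetilde F_k$ does, restricting on $X$ to the absolute Frobenius $F_X$, which is finite since $X$ is smooth over the perfect field $k$. Hence $\widetilde F_X$ is quasi-finite at every point of $X$, and since the quasi-finite locus of a morphism locally of finite type is open, the propagation principle shows $\widetilde F_X$ is quasi-finite on all of $\mathcal X$, hence finite. Now $\mathcal G \coloneqq (\widetilde F_X)_{*}\mathcal O_{\mathcal X}$ is a coherent $\mathcal O_{\mathcal X}$-module, flat over $W(k)$ (because $\mathcal O_{\mathcal X}$ is $p$-torsion free), whose reduction modulo $p$ is $(F_X)_{*}\mathcal O_X$ — a locally free $\mathcal O_X$-module, again because $X$ is smooth over the perfect field $k$. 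The local criterion of flatness then shows $\mathcal G$ is $\mathcal O_{\mathcal X}$-flat, hence locally free, at every point of $X$; as the locally free locus of a coherent sheaf is open, a final application of the propagation principle gives that $\mathcal G$ is locally free on all of $\mathcal X$, i.e.\ $\widetilde F_X$ is finite flat. (Alternatively, once $\mathcal X$ is regular and $\widetilde F_X$ is finite and surjective, flatness follows from miracle flatness.)

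The single genuine difficulty is that every hypothesis available to us is a statement about the special fiber modulo $p$, whereas the conclusions must hold on all of $\mathcal X$, in particular on the generic fiber where $p$ is invertible and the mod-$p$ information disappears. This is precisely what the propagation principle above repairs, and it rests only on properness over the local base $W(k)$; I anticipate no other obstacle.
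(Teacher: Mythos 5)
Your proof is correct and follows essentially the same route as the paper's: the paper proves the smoothness of $\mathcal{X}$ and the finiteness of $\widetilde{F}_X$ by invoking its Lemmas \ref{OpenNeighborhoodofClosedFiber} and \ref{FiniteLiftings}, which are precisely the ``propagation principle'' you isolate and apply (openness of the smooth and quasi-finite loci, plus properness over the local base $W(k)$), and then deduces flatness of $\widetilde{F}_X$ from miracle flatness, which you also mention as the alternative to your local-criterion-of-flatness argument.
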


\begin{proof}
It is clear that $\widetilde{F}_k$ is bijective, because the associated ring map $W(k) \to W(k)$ is an automorphism which lifts the Frobenius bijection on $k$. Since $\mathcal{X}$ is a proper $W(k)$-scheme by the definition of flat liftings for proper \(k\)-schemes, it follows that the composite morphism $\mathcal{X} \to \Spec(W(k)) \xrightarrow{\widetilde{F}_k} \Spec(W(k))$ is proper by \citeSta{01W3}. Hence $\widetilde{F}_X$ is automatically a proper morphism by \citeSta{01W6} and thus the restricted morphism $F_X:X \to X$ of $\widetilde{F}_X$ is finite since the Frobenius morphism is affine. These assumptions say that \(\widetilde{F}_X\) is also finite by \Cref{FiniteLiftings}.
Finally, since $X$ is a smooth proper $k$-variety, the generic fiber of $\mathcal{X} \to \Spec(W(k))$ is smooth by the theorem of generic smoothness (see \cite[Proposition 3.11]{Sh17} and \Cref{OpenNeighborhoodofClosedFiber}). Then $\mathcal{X}$ is a smooth $W(k)$-scheme. So we find that $\widetilde{F}_X$ is flat by so-called ``miracle flatness'' (\citeSta{00R4}), as desired.
\end{proof}

We record the following lemma for convenience.

\begin{lemma}[{\cite[Lemma 24.96]{GW2}}]
\label{OpenNeighborhoodofClosedFiber}
Let \(f \colon \mathcal{X} \to \Spec(V)\) be a closed morphism of schemes, where \((V, sV, k)\) is a rank-$1$ valuation ring. Set the closed fiber \(f_{s} \colon \mathcal{X}_{s} \to \Spec(k)\). Then there is no open neighborhood of \(X\) in \(\mathcal{X}\) other than \(\mathcal{X}\).
\end{lemma}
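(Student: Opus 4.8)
The plan is to reduce the statement to the elementary topology of $\Spec(V)$ together with the defining property of a closed morphism. First I would record the description of $\Spec(V)$ for a rank-$1$ valuation ring $(V, sV, k)$: it has exactly two points, the generic point $\eta$ corresponding to the zero ideal and the closed point $s$ corresponding to the maximal ideal $sV$, and therefore its only closed subsets are $\varnothing$, $\{s\}$ and $\Spec(V)$. The key consequence I would extract is that \emph{every nonempty closed subset of $\Spec(V)$ contains $s$}; in fact this already holds for the spectrum of any local ring, since a proper ideal is contained in the maximal ideal, and only this much is really used.

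Next I would argue in contrapositive form. Let $U \subseteq \mathcal{X}$ be an open subset with $\mathcal{X}_s \subseteq U$; since the underlying topological space of the fiber $\mathcal{X}_s$ is $f^{-1}(s)$, this means $f^{-1}(s) \subseteq U$. Set $Z \coloneqq \mathcal{X} \setminus U$, which is closed in $\mathcal{X}$, and observe that $Z \cap f^{-1}(s) = \varnothing$, so $s \notin f(Z)$. Now I would invoke the hypothesis that $f$ is a closed morphism: $f(Z)$ is a closed subset of $\Spec(V)$ not containing $s$, hence $f(Z) = \varnothing$ by the first step, hence $Z = \varnothing$ and $U = \mathcal{X}$. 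This is precisely the assertion that $\mathcal{X}$ is the only open neighbourhood of $\mathcal{X}_s$ in $\mathcal{X}$.

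I do not anticipate a genuine obstacle: the proof collapses to a two-line argument once the topology of $\Spec(V)$ is in hand, and the only point demanding (minimal) care is the claim that a nonempty closed subset of $\Spec(V)$ must pass through the closed point $s$, which is immediate from $V$ being local.
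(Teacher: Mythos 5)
Your argument is correct, and it is essentially the standard one; the paper itself gives no proof and simply cites \cite[Lemma 24.96]{GW2}, so there is nothing in the paper's text to compare against. The reduction to the fact that every nonempty closed subset of the spectrum of a local ring contains the closed point, combined with the closedness of $f$, is exactly what one expects, and your observation that the rank-$1$ hypothesis is irrelevant (only locality of $V$ is used) is accurate. One small point worth flagging for the record: the statement as printed says ``open neighborhood of $X$'', which is evidently a typo for the closed fiber $\mathcal{X}_s$, and you read it correctly.
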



We will use the following lifting property of finite morphisms.

\begin{lemma}
\label{FiniteLiftings}
Let \(\widetilde{f} \colon \mathcal{X} \to \mathcal{Y}\) be a morphism of proper schemes over \(\Spec(V)\), where \((V, sV, k)\) is a rank-$1$ valuation ring. Assume that the closed fiber \(\widetilde{f}_s \colon \mathcal{X}_{s} \to \mathcal{Y}_{s}\) is finite (resp., \'etale). Then \(\widetilde{f}\) is also finite (resp., \'etale).
\end{lemma}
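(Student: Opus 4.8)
The plan is to run both cases through a single mechanism. Since $\mathcal{X}\to\Spec(V)$ is proper, it is a closed morphism, so by \Cref{OpenNeighborhoodofClosedFiber} the only open subset of $\mathcal{X}$ containing the closed fiber $\mathcal{X}_{s}$ is $\mathcal{X}$ itself. Hence any property of $\widetilde{f}$ that is local on $\mathcal{X}$ and cuts out an \emph{open} locus there holds on all of $\mathcal{X}$ as soon as it holds at the points of $\mathcal{X}_{s}$, where it is controlled by $\widetilde{f}_{s}$. First I would record that $\widetilde{f}$ is proper, since $\mathcal{X}\to\Spec(V)$ is proper and $\mathcal{Y}\to\Spec(V)$ is separated (\citeSta{01W6}); in particular $\widetilde{f}$ is closed and of finite type.

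For the finite case: as $\widetilde{f}_{s}$ is finite, every point of $\mathcal{X}_{s}$ is isolated in its $\widetilde{f}_{s}$-fiber, and because $\mathcal{X}_{s}=\mathcal{X}\times_{\mathcal{Y}}\mathcal{Y}_{s}$ the $\widetilde{f}$-fiber over a point of $\mathcal{Y}_{s}$ coincides with the corresponding $\widetilde{f}_{s}$-fiber, so these points are isolated in their $\widetilde{f}$-fibers as well. The quasi-finite locus of $\widetilde{f}$ is open in $\mathcal{X}$ (upper semicontinuity of fiber dimension for a morphism locally of finite type), and it contains $\mathcal{X}_{s}$, hence equals $\mathcal{X}$ by \Cref{OpenNeighborhoodofClosedFiber}; a proper quasi-finite morphism is finite by Zariski's Main Theorem, which settles this case.

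For the étale case: an étale morphism is quasi-finite, so the previous paragraph already shows $\widetilde{f}$ is finite, and it remains to see that it is unramified and flat. For unramifiedness, $\widetilde{f}_{s}$ étale gives $\Omega^{1}_{\mathcal{X}_{s}/\mathcal{Y}_{s}}=0$, and base change of K\"ahler differentials along $\mathcal{X}_{s}=\mathcal{X}\times_{\mathcal{Y}}\mathcal{Y}_{s}\hookrightarrow\mathcal{X}$ identifies this sheaf with $\Omega^{1}_{\mathcal{X}/\mathcal{Y}}\otimes_{\mathcal{O}_{\mathcal{X}}}\mathcal{O}_{\mathcal{X}_{s}}$; since $\Omega^{1}_{\mathcal{X}/\mathcal{Y}}$ is of finite type and $\mathfrak{m}_{V}\mathcal{O}_{\mathcal{X},x}\subseteq\mathfrak{m}_{\mathcal{X},x}$ for $x\in\mathcal{X}_{s}$, Nakayama yields $(\Omega^{1}_{\mathcal{X}/\mathcal{Y}})_{x}=0$ at all such $x$. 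Thus the (closed) support of $\Omega^{1}_{\mathcal{X}/\mathcal{Y}}$ is disjoint from $\mathcal{X}_{s}$; its open complement is the unramified locus of $\widetilde{f}$ and contains $\mathcal{X}_{s}$, hence is all of $\mathcal{X}$ by \Cref{OpenNeighborhoodofClosedFiber} once more. For flatness, $\widetilde{f}$ finite makes $\widetilde{f}_{*}\mathcal{O}_{\mathcal{X}}$ a finite-type $\mathcal{O}_{\mathcal{Y}}$-module whose restriction to $\mathcal{Y}_{s}$ is locally free (affine base change together with flatness of $\widetilde{f}_{s}$), and one upgrades this to flatness over $\mathcal{Y}$ by the fiberwise criterion for flatness over $\Spec(V)$. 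Altogether $\widetilde{f}$ is finite, unramified and flat, i.e. finite étale.

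I expect the flatness step to be the only genuine obstacle. Fiberwise flatness does not by itself imply flatness, and the fiberwise criterion requires $\mathcal{X}$ (equivalently $\mathcal{X}$ and $\mathcal{Y}$) to be flat over $V$; this is harmless in our situation, since in every application the relevant schemes are flat liftings over $W(k)$, and over $W(k)$ flatness is just torsion-freeness. (For a non-discrete rank-$1$ valuation ring one should additionally ensure $\widetilde{f}$ is of finite presentation, so that the loci used above are genuinely open and the flatness criterion applies; for $V=W(k)$ this is automatic.) Everything else is soft --- properness by cancellation, openness of the quasi-finite and unramified loci, and the identification of a proper quasi-finite morphism with a finite one --- the decisive geometric input being the elementary \Cref{OpenNeighborhoodofClosedFiber}.
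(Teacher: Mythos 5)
Your proof is essentially correct for the finite case and takes a slightly different route from the paper's; for the étale case, the concern you raise about flatness is not just a gap in your argument but in the lemma itself as stated.

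For finiteness, you apply \Cref{OpenNeighborhoodofClosedFiber} to the source $\mathcal{X}$: the quasi-finite locus is open in $\mathcal{X}$, contains $\mathcal{X}_s$, hence is all of $\mathcal{X}$, and proper plus quasi-finite gives finite. The paper instead uses \citeSta{0D4I} to produce an open $U\subseteq\mathcal{Y}$ over which fibers have dimension zero, then applies \Cref{OpenNeighborhoodofClosedFiber} to the target $\mathcal{Y}$, and concludes via \citeSta{02OG}. Both are sound and amount to the same mechanism on different sides of $\widetilde{f}$. For the étale case, the paper cites \cite[Prop.\ 17.15.15]{EGAIV} (or \cite[Prop.\ 3.11]{Sh17}) as a single off-the-shelf ``smooth closed fiber gives smooth morphism over an open neighborhood'' result, whereas you decompose \'etale $=$ finite $+$ unramified $+$ flat and establish each piece. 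Your Nakayama argument for the unramified locus is correct and only uses that $\Omega^1_{\mathcal{X}/\mathcal{Y}}$ is coherent with vanishing restriction to $\mathcal{X}_s$.

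The flatness step, however, is a genuine obstruction that you have correctly isolated. Without assuming $\mathcal{X}$ flat over $V$, the étale half of the lemma is false: take $V=W(k)$, $\mathcal{Y}=\Spec(W(k))$, $\mathcal{X}=\Spec(k)$, and $\widetilde{f}$ the closed immersion. Both schemes are proper over $V$, the closed fiber $\widetilde{f}_s$ is the identity of $\Spec(k)$ and hence étale, but $\widetilde{f}$ is not flat. (Note that the finite half survives: a closed immersion is finite.) The fibrewise flatness criterion you invoke requires $\mathcal{X}\to\Spec(V)$ flat, and the EGA result cited by the paper likewise assumes flatness of $\widetilde{f}$ along the fiber, so the paper's proof does not close this gap either. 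Since in every application in the paper $\mathcal{X}$ is a flat lifting over $W(k)$, the downstream results are unaffected, but your instinct that the lemma should carry a flatness hypothesis (and that adding ``$\widetilde{f}$ of finite presentation'' is needed for non-discrete $V$) is exactly right.
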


\begin{proof}
Since \(\widetilde{f}\) is a morphism between proper schemes, it is a proper morphism by \citeSta{01W6}. Set \(X \coloneqq \mathcal{X}_s\) and \(Y \coloneqq \mathcal{Y}_s\). Since the restriction morphism \(\widetilde{f}_s\) is finite, the dimension of the fiber \((\widetilde{f})^{-1}(y)\) is zero for any \(y \in Y\). By \citeSta{0D4I}, there exists an open subset \(U\) of \(\mathcal{Y}\) such that \(U\) contains \(Y\) and \(\dim((\widetilde{f})^{-1}(y)) = 0\) for any \(y \in U\). Since \(\mathcal{Y}\) is proper over \(\Spec(V)\), \(U\) should be the whole \(\mathcal{Y}\) by \Cref{OpenNeighborhoodofClosedFiber}. This implies that \(\widetilde{f}\) is finite by \citeSta{02OG}.

If \(\widetilde{f}_s \colon X \to Y\) is \'etale, for any point \(y \in Y \hookrightarrow \mathcal{Y}\), the fiber \(\mathcal{X}_y = X_y \to \Spec(\kappa(y))\) is \'etale (i.e., smooth of relative dimension \(0\)). Then by \cite[Proposition 17.15.15]{EGAIV} (or the proof of \cite[Proposition 3.11]{Sh17}), there exists an open subset \(U \subseteq \mathcal{Y}\) that contains \(Y\) such that \(\widetilde{f}^{-1}(U) \to U\) is \'etale. By \Cref{OpenNeighborhoodofClosedFiber}, \(U\) is necessarily equal to \(\mathcal{Y}\).
This implies that \(\widetilde{f} : \mathcal{X} \to \mathcal{Y}\) is \'etale.
\end{proof}

If \(X\) has a quasi-canonical lifting over \(W(k)\), its Picard group \(\Pic(X)\) can be lifted to specific subgroups of the Picard group \(\Pic(\mathcal{X})\) as follows.

\begin{lemma}
\label{LineBundleLift}
Let \(X\) be a proper variety over a perfect field \(k\) of characteristic \(p > 0\). Assume that \(X\) admits a quasi-canonical lifting \((\mathcal{X}, \widetilde{F}_X)\) over \(W(k)\). Denote by $\Pic(\mathcal{X}) \to \Pic(X)$ the map of Picard groups induced by the closed immersion $X \hookrightarrow \mathcal{X}$. Set $P\coloneqq \{\mathcal{L} \in \Pic(\mathcal{X})~|~\widetilde{F}_X^*(\mathcal{L}) \cong \mathcal{L}^p\}$. Then $P$ is a subgroup of $\Pic(\mathcal{X})$ and the composite mapping  $P \hookrightarrow \Pic(\mathcal{X}) \to \Pic(X)$ is an isomorphism.
\end{lemma}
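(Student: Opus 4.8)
The plan is to descend everything to the $p$-adic formal completion and to build the claimed correspondence one infinitesimal step at a time. Write $\mathcal X_n \coloneqq \mathcal X \times_{\Spec W(k)} \Spec W_n(k)$, so that $\mathcal X_1 = X$, and first record three facts. (a) Since $\widetilde F_k$ fixes $p$, the morphism $\widetilde F_X$ restricts to morphisms $\widetilde F_{X,n}\colon \mathcal X_n \to \mathcal X_n$ compatible with the truncations $\mathcal X_{n+1}\to\mathcal X_n$ and reducing on $\mathcal X_1 = X$ to the absolute Frobenius $F_X$. (b) Since $X$ admits a quasi-canonical lifting, it is ordinary by Nakkajima's theorem recalled above; hence $H^i(X,B\Omega^1_X)=0$ for all $i$, and the exact sequence \eqref{CartierExact2} shows that $F_X^*\colon H^i(X,\mathcal O_X)\to H^i(X,\mathcal O_X)$ is bijective for every $i$, in particular for $i=1,2$. (c) As $\mathcal X$ is proper over the complete Noetherian local ring $W(k)$ (proper, by our standing convention for liftings of proper varieties), Grothendieck's existence theorem identifies $\Pic(\mathcal X)$ with $\Pic(\widehat{\mathcal X}) = \varprojlim_n \Pic(\mathcal X_n)$: a line bundle on $\mathcal X$ is trivial once all its restrictions to the $\mathcal X_n$ are, and a compatible system of line bundles on the $\mathcal X_n$ algebraizes to one on $\mathcal X$. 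That $P$ is a subgroup is clear, since $\widetilde F_X^*$ and $\mathcal L\mapsto\mathcal L^p$ are group endomorphisms of $\Pic(\mathcal X)$.

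Next I would set up the deformation theory of line bundles along each square-zero thickening $\mathcal X_n\hookrightarrow\mathcal X_{n+1}$ ($n\ge 1$): its ideal $p^n\mathcal O_{\mathcal X_{n+1}}$ is canonically identified with $\mathcal O_X$ via $\bar a\mapsto p^n a$, so the obstruction to lifting a line bundle from $\mathcal X_n$ to $\mathcal X_{n+1}$ is a class $\partial(-)\in H^2(X,\mathcal O_X)$, the map $\partial\colon\Pic(\mathcal X_n)\to H^2(X,\mathcal O_X)$ is a group homomorphism, and, when it vanishes, the fibres of $\Pic(\mathcal X_{n+1})\to\Pic(\mathcal X_n)$ are torsors under $K_n\coloneqq \ker(\Pic(\mathcal X_{n+1})\to\Pic(\mathcal X_n))$, an $F_X^*$-stable subquotient of $H^1(X,\mathcal O_X)$. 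Two points are crucial: (i) the coefficient groups $H^i(X,\mathcal O_X)$ are killed by $p$, so $\partial(\mathcal L^p)=p\,\partial(\mathcal L)=0$ and the $p$-th power map kills $K_n$; and (ii) compatibility of the Frobenius lifts with the truncations makes the obstruction map and the torsor structure $\widetilde F^*$-equivariant, and under the identification $p^n\mathcal O_{\mathcal X_{n+1}}\cong\mathcal O_X$ the operator $\widetilde F_{X,n+1}^*$ acts on $H^i(X,\mathcal O_X)$ through $F_X^*$ (on a Čech representative $1+p^n h$ one has $\widetilde F_{X,n+1}^\#(1+p^n h)=1+p^n h^p+\cdots\equiv 1+p^n h^p$ mod $p^{n+1}$). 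Combining (ii) with (b), $\widetilde F_{X,n+1}^*$ acts bijectively on $H^2(X,\mathcal O_X)$ and on $K_n$.

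With this in hand, injectivity of $P\hookrightarrow\Pic(\mathcal X)\to\Pic(X)$ is an induction: if $\mathcal L\in P$ restricts to $\mathcal O$ on $X$ and, inductively, on $\mathcal X_n$, then $[\mathcal L|_{\mathcal X_{n+1}}]\in K_n$ and, using (ii) and the relation $\widetilde F_X^*\mathcal L\cong\mathcal L^p$,
$F_X^*[\mathcal L|_{\mathcal X_{n+1}}] = [\widetilde F_{X,n+1}^*(\mathcal L|_{\mathcal X_{n+1}})] = [\mathcal L^p|_{\mathcal X_{n+1}}] = p\,[\mathcal L|_{\mathcal X_{n+1}}] = 0$ in $K_n$; since $F_X^*$ is injective on $K_n$ this forces $[\mathcal L|_{\mathcal X_{n+1}}]=0$, so $\mathcal L|_{\mathcal X_n}\cong\mathcal O$ for all $n$ and $\mathcal L\cong\mathcal O_{\mathcal X}$ by (c). For surjectivity, given $L\in\Pic(X)$ I would construct compatible lifts $\mathcal L_n\in\Pic(\mathcal X_n)$ with $\widetilde F_{X,n}^*\mathcal L_n\cong\mathcal L_n^p$. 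Take $\mathcal L_1=L$: this is legitimate since $F_X^*L\cong L^p$ for the absolute Frobenius of any $\mathbb{F}_p$-scheme. Given $\mathcal L_n$ with the Frobenius relation, applying $\partial$ to $\widetilde F_{X,n}^*\mathcal L_n\cong\mathcal L_n^p$ and using (ii), (i) gives $F_X^*(\partial\mathcal L_n)=\partial(\mathcal L_n^p)=0$, hence $\partial\mathcal L_n=0$; thus $\mathcal L_n$ lifts to some $\mathcal M\in\Pic(\mathcal X_{n+1})$. The discrepancy $\delta\coloneqq[\widetilde F_{X,n+1}^*\mathcal M\otimes\mathcal M^{-p}]$ restricts to $\mathcal O$ on $\mathcal X_n$, so lies in $K_n$; since $\widetilde F_{X,n+1}^*$ is surjective on $K_n$ there is $\mathcal N\in K_n$ with $\widetilde F_{X,n+1}^*[\mathcal N]=-\delta$, and then $\mathcal L_{n+1}\coloneqq\mathcal M\otimes\mathcal N$ still lifts $\mathcal L_n$ and, because the $p$-th power kills $K_n$, satisfies $\widetilde F_{X,n+1}^*\mathcal L_{n+1}\cong\mathcal L_{n+1}^p$. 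Algebraizing $\{\mathcal L_n\}$ to $\mathcal L\in\Pic(\mathcal X)$, the bundles $\widetilde F_X^*\mathcal L$ and $\mathcal L^p$ have the same class in each $\Pic(\mathcal X_n)$, hence are isomorphic by (c); so $\mathcal L\in P$ and $\mathcal L|_X\cong L$.

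The step I expect to cost the most care is point (ii): verifying that the obstruction class and the $H^1$-torsor structure for the thickenings $\mathcal X_n\hookrightarrow\mathcal X_{n+1}$ are $\widetilde F^*$-equivariant, and that under the canonical identification of the conormal ideal $p^n\mathcal O_{\mathcal X_{n+1}}$ with $\mathcal O_X$ the operator $\widetilde F_{X,n+1}^*$ becomes the absolute-Frobenius pullback $F_X^*$. It is exactly this reduction that lets Nakkajima's ordinarity --- bijectivity of $F_X^*$ on $H^1(X,\mathcal O_X)$ and $H^2(X,\mathcal O_X)$ --- simultaneously kill the obstructions and correct the lifts. The remaining ingredients --- $P$ being a subgroup, $F_X^*L\cong L^p$, and the comparison $\Pic(\mathcal X)\cong\varprojlim_n\Pic(\mathcal X_n)$ via Grothendieck existence --- are standard.
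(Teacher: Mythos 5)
Your proof is correct and follows essentially the same strategy as the paper's: establish bijectivity of $F^*$ on coherent cohomology, run the deformation theory of line bundles along the square-zero thickenings $\mathcal X_n\hookrightarrow\mathcal X_{n+1}$ compatibly with the Frobenius lifts, and algebraize via Grothendieck existence. The difference is that the paper packages the entire formal-lifting step into a citation of \cite[Appendix: Canonical liftings, Proposition 2]{MS87} and the algebraization into \cite[Proposition 24.95]{GW2}, whereas you reconstruct the former from scratch. Your reconstruction is sound: the identification of the conormal ideal $p^n\mathcal O_{\mathcal X_{n+1}}$ with $\mathcal O_X$, the observation that $\widetilde F_{X,n+1}^*$ acts on this ideal as $F_X^*$ (your \v{C}ech computation $\widetilde F^\#(1+p^nh)\equiv 1+p^nh^p$), the fact that $H^i(X,\mathcal O_X)$ is $p$-torsion so that $\partial(\mathcal L^p)=0$ and $K_n^{[p]}=0$, and the ``kill obstruction / correct discrepancy'' induction are exactly the content of the cited MS87 proposition.

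Two small remarks. First, to get bijectivity of $F_X^*$ on $H^i(X,\mathcal O_X)$ you invoke Nakkajima's theorem and the exact sequence \eqref{CartierExact2}, both of which require $X$ smooth, while the lemma is stated for a proper (not necessarily smooth) variety; the paper sidesteps this by deducing $F$-splitting from the mod-$p^2$ Frobenius lift via \cite[Theorem 5.5]{Z17}, which needs no smoothness, and then concluding bijectivity of the semilinear Frobenius from finite-dimensionality of $H^i(X,\mathcal O_X)$ and perfectness of $k$. Second, you quietly identify $K_n=\ker(\Pic(\mathcal X_{n+1})\to\Pic(\mathcal X_n))$ with a group on which $F_X^*$ acts bijectively; in general $K_n$ is a quotient of $H^1(X,\mathcal O_X)$ by the image of the connecting map from $H^0(\mathcal O_{\mathcal X_n}^\times)$, and one should either note that this image vanishes (as it does when $H^0(\mathcal X_n,\mathcal O_{\mathcal X_n})=W_n(k')$ for a finite extension $k'/k$, which is the situation in the paper's applications) or argue that $F_X^*$ descends bijectively to the quotient. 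Neither point affects the validity of your approach in the cases the lemma is actually used.
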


\begin{proof}
We have known that \(X\) is globally Frobenius-split by the existence of \((\mathcal{X}, \widetilde{F}_X)\) and \cite[Theorem 5.5]{Z17}. Since $H^i(X,\mathcal{O}_X)$ are finite-dimensional $k$-vector spaces \citeSta{0205}, we have that the natural Frobenius action $F^*:H^i(X,\mathcal{O}_X) \to H^i(X,\mathcal{O}_X)$ are bijective for all $i \ge 0$. Then we can apply \cite[Appendix: Canonical liftings, Proposition 2]{MS87}. Namely, let us fix a line bundle $L=L_1 \in \Pic(X_1)$. Noticing that $X_1=X$, we have $F_X^*(L_1) \cong L_1^p$. Then we can get a $p$-adic formal invertible sheaf $\{L_n\}_{n \ge 1}$ such that $L_{n+1}$ is the unique invertible sheaf on $X_{n+1}$ lifting of $L_n$ satisfying $\widetilde{F}_{X,n+1}^*(L_{n+1}) \cong L_{n+1}^p$, where $\widetilde{F}_{X,n+1}:X_{n+1} \to X_{n+1}$ is the reduction of the Frobenius lifting $\widetilde{F}_X$ along $X_{n+1} \hookrightarrow \mathcal{X}$. By the theorem of algebraizing line bundles \cite[Proposition 24.95]{GW2}, we can find $\mathcal{L} \in \Pic(\mathcal{X})$ in a unique way such that $\mathcal{L}|_X \cong L$ and $\widetilde{F}_X^*(\mathcal{L}) \cong \mathcal{L}^p$. This completes the proof of the lemma.
\end{proof}



\subsection{Extending canonical structures over finite \'etale covers}

Let $Z$ be a scheme. We can define the scheme $W_2(Z) \coloneqq (Z,\mathcal{O}_{W_2(Z)})$ in such a way that the structure sheaf is defined by $U \mapsto \mathcal{O}_{W_2(U)} \coloneqq W_2(\mathcal{O}(U))$ for an affine open subset $U \subseteq Z$. This makes sense because one can glue $W_2(\mathcal{O}(U))$ and $W_2(\mathcal{O}(V))$ along $U \cap V$ in the unique way (see \cite{Bor11} for the geometry of Witt sheaves on schemes, or more generally algebraic spaces). A more organized way of doing this is to consider $W_n(-)$ as a contravariant functor from the category of affine schemes with \'etale topology to the category of sets, and then prove its representablity by a scheme (see \cite[Theorem 1.5.1, Theorem 1.5.2 and Theorem 1.5.3]{BorGur20} for precise statements). Indeed, this point of view will be essential to define $W(Z)$, but we will not touch on this topic. 

Let $i:Z \to W_2(Z)$ be a natural morphism. As in the case of rings, one can define a $\delta$-scheme structure on $Z$ by specifying a map of sheaf of rings $i_*\mathcal{O}_Z \to \mathcal{O}_{W_2(Z)}$ on the topological space $|Z|$, which is the section of the projection map $\mathcal{O}_{W_2(Z)} \to i_*\mathcal{O}_Z$. The following proposition is a slight generalization of \cite[Lemma (1.2)]{MS87}.

\begin{proposition}
\label{liftcanonical}
Let $X$ and \(Y\) be projective varieties over an algebraically closed field $k$ of characteristic $p>0$. Assume that $f:Y \to X$ is a surjective finite \'etale morphism and there exists a \(p\)-adic formal lifting \(\{(X_n, F_{X, n})\}_{n \geq 1}\) of \((X, F_X)\) with compatible Frobenius lifts. Then $Y$ admits a \(p\)-adic formal lifting \(\{(Y_n, F_{Y, n})\}_{n \geq 1}\) of \((Y, F_Y)\) with compatible Frobenius lifts which is uniquely determined under the following conditions.
\begin{enumerate}
\item
$f \colon Y \to X$ lifts to a surjective finite \'etale morphism $\{f_n\}_{n \geq 1} : \{(Y_n, F_{Y, n})\}_{n \geq 1} \to \{(X_n, F_{X, n})\}_{n \geq 1}$ in a unique way:
\[
\xymatrix@M=10pt{ 
Y_n \ar[r]^{f_n} &X_n\\
Y \ar[r]^{f} \ar[u] & X. \ar[u]. \\
}
\]

\item
The following diagram commutes
\[
\xymatrix@M=10pt{ 
Y_n \ar[r]^{F_{Y, n}} \ar[d]_{f_n} &Y_n \ar[d]_{f_n}\\
X_n \ar[r]^{F_{X, n}} & X_n. \\
}
\]
\end{enumerate}
Furthermore, if \(X\) has a projective quasi-canonical lifting \((\mathcal{X}, \widetilde{F}_X)\) which gives an algebraization of \(\{(X_n, F_{X, n})\}_{n \geq 1}\), then the above conditions are algebraizable. Namely, \(Y\) admits a projective quasi-canonical lifting \((\mathcal{Y}, \widetilde{F}_Y)\) and a finite \'etale surjective morphism \(\widetilde{f} \colon \mathcal{Y} \to \mathcal{X}\) satisfying $\widetilde{f} \circ \widetilde{F}_Y=\widetilde{F}_X\circ \widetilde{f}$, which gives an algebraization of the above diagrams and it is uniquely determined under the conditions (1).

\end{proposition}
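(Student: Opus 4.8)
The plan is to treat the formal construction and the algebraization separately, in each case propagating the cover $f$ and the Frobenius compatibility by the rigidity of étale morphisms. For the formal tower I would induct on $n$, starting from $(Y_1,f_1,F_{Y,1})=(Y,f,F_Y)$. Assume $f_n\colon Y_n\to X_n$ is finite étale. The closed immersion $X_n\hookrightarrow X_{n+1}$ is a square-zero thickening (its ideal is $p^n\mathcal O_{X_{n+1}}$ and $p^{n+1}=0$ on $X_{n+1}$), so by topological invariance of the étale site (\citeSta{04DY}) there is a unique finite étale $f_{n+1}\colon Y_{n+1}\to X_{n+1}$ restricting to $f_n$; it is flat over $S_{n+1}$ (being étale over the flat $S_{n+1}$-scheme $X_{n+1}$), satisfies $Y_{n+1}\times_{S_{n+1}}S_n=Y_{n+1}\times_{X_{n+1}}X_n=Y_n$ by uniqueness, and is surjective of constant degree $d$ since $Y_{n+1}\to X_{n+1}$ shares its underlying spaces and residue fields with $f$. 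This gives $\{f_n\}$ as in (1), uniquely. For the Frobenius lifts I would use the infinitesimal lifting property of the formally étale $f_{n+1}$: because $f\circ F_Y=F_X\circ f$ (naturality of the absolute Frobenius) and $Y_n\hookrightarrow Y_{n+1}$ is square-zero, the lifting problem posed by $F_{Y,n}\colon Y_n\to Y_n\hookrightarrow Y_{n+1}$ over $F_{X,n+1}\circ f_{n+1}\colon Y_{n+1}\to X_{n+1}$ has a unique solution $F_{Y,n+1}\colon Y_{n+1}\to Y_{n+1}$; by construction it lifts $F_{Y,n}$ (hence $F_Y$) and satisfies $f_{n+1}\circ F_{Y,n+1}=F_{X,n+1}\circ f_{n+1}$, i.e.\ (2), while the compatibility of $F_{Y,n+1}$ with $\widetilde F_k$ over $S_{n+1}$ follows automatically from that of $F_{X,n+1}$ together with $f_{n+1}$ being an $S_{n+1}$-morphism. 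Uniqueness at each step yields uniqueness of the entire formal datum under (1)--(2).

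For the algebraization, assume $(\mathcal X,\widetilde F_X)$ is a projective quasi-canonical lifting algebraizing $\{(X_n,F_{X,n})\}$. By Grothendieck's existence theorem (\cite[Corollary 8.4.7]{I05}), applied to $\mathcal X$ proper over the $p$-adically complete ring $W(k)$, the compatible system of coherent $\mathcal O_{X_n}$-algebras $f_{n,*}\mathcal O_{Y_n}$ algebraizes to a coherent sheaf of $\mathcal O_{\mathcal X}$-algebras $\mathcal A$ with $\mathcal A/p^n\cong f_{n,*}\mathcal O_{Y_n}$, so that $\mathcal Y:=\underline{\Spec}_{\mathcal X}(\mathcal A)$ is finite over $\mathcal X$, hence projective over $W(k)$, with $\mathcal Y\times_{W(k)}W_n(k)=Y_n$. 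The closed fiber of $\widetilde f\colon\mathcal Y\to\mathcal X$ is the finite étale $f\colon Y\to X$, so \Cref{FiniteLiftings} shows $\widetilde f$ is finite étale; it is surjective of constant degree $d$ because its locally constant degree function equals $d$ on the closed fiber $X$, and $X$ meets every connected component of $\mathcal X$ by flatness of $\mathcal X$ over $W(k)$. Flatness of $\widetilde f$ and of $\mathcal X\to\Spec W(k)$ makes $\mathcal Y$ a projective flat lifting of $Y$. For the Frobenius, formal GAGA (\cite[Corollary 8.4.7]{I05}) makes the functor from finite étale $\mathcal Y$-schemes to compatible systems of finite étale $Y_n$-schemes an equivalence, so the finite étale $\mathcal Y$-scheme $\mathcal V:=\mathcal Y\times_{\widetilde F_X\circ\widetilde f,\,\mathcal X,\,\widetilde f}\mathcal Y$ (first projection) corresponds to $\{Y_n\times_{F_{X,n}\circ f_n,\,X_n,\,f_n}Y_n\}$, and the compatible sections given by $\{F_{Y,n}\}$ come from a unique section $s\colon\mathcal Y\to\mathcal V$; then $\widetilde F_Y:=\mathrm{pr}_2\circ s$ satisfies $\widetilde f\circ\widetilde F_Y=\widetilde F_X\circ\widetilde f$, reduces to $F_{Y,n}$ over each $Y_n$ (so to $F_Y$ over $Y$), and lies over $\widetilde F_k$. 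Thus $(\mathcal Y,\widetilde F_Y)$ is a projective quasi-canonical lifting, $\widetilde f$ is as required, and uniqueness under (1) holds since $\mathcal Y\to\mathcal X$ is the unique finite étale lift of $f$ and $\widetilde F_Y$ the unique corresponding section.

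The formal tower is the easy part: rigidity of the étale site and the infinitesimal lifting criterion do everything, and surjectivity, constancy of the degree, and compatibility with $\widetilde F_k$ are automatic. The main obstacle will be the algebraization — one must promote the formal finite étale cover and the formal Frobenius to genuine schemes and morphisms. This needs Grothendieck's existence theorem to produce $\mathcal Y$ as a coherent $\mathcal O_{\mathcal X}$-algebra, then \Cref{FiniteLiftings} to recover that $\widetilde f$ is finite étale (formal GAGA by itself yields only a finite morphism), and finally the formal-GAGA comparison of finite étale covers of $\mathcal Y$ with those of the formal scheme $\{Y_n\}$ in order to algebraize $\widetilde F_Y$.
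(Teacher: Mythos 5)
Your proof is correct and takes a genuinely parallel but technically different route from the paper's. For the formal tower $\{(Y_n,f_n,F_{Y,n})\}$, the paper exploits the vanishing $\mathbb{L}_{Y/X}\simeq_{q.i.}0$ and the cotangent-complex deformation machinery of Zdanowicz/Illusie (\cite[Theorem A.4]{Z17}, \citeSta{08UZ}, \citeSta{06AG}) to produce each $f_{n+1}$, while you instead invoke the topological invariance of the \'etale site, which gives the unique finite \'etale lift $f_{n+1}$ in one stroke without cotangent complexes; both arguments then use essentially the same infinitesimal lifting criterion for formally \'etale morphisms to obtain $F_{Y,n+1}$ (the paper cites \citeSta{08U8}), and your verification that the relevant outer square commutes comes down to the inductive hypothesis $f_n\circ F_{Y,n}=F_{X,n}\circ f_n$, exactly as in the paper. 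For the algebraization of $\mathcal{Y}$, the paper pulls back an ample line bundle from $\mathcal{X}$ along $f_n$ and applies Grothendieck's algebraization theorem for formal schemes with formal ample sheaves (\citeSta{089A}), whereas you algebraize the coherent $\mathcal{O}_{\mathcal{X}}$-algebra $\{f_{n,*}\mathcal{O}_{Y_n}\}$ directly and set $\mathcal{Y}=\underline{\Spec}_{\mathcal{X}}(\mathcal{A})$; the relative-Spec route is arguably more natural for finite covers and avoids the ample-sheaf bookkeeping, and both proofs then rely on \Cref{FiniteLiftings} and \Cref{OpenNeighborhoodofClosedFiber} to upgrade the algebraized lift of $f$ to a finite \'etale surjection. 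Finally, for $\widetilde{F}_Y$ the paper algebraizes the formal morphism $\varinjlim F_{Y,n}$ directly via \cite[Corollary 8.4.7]{I05} (viewing the Frobenius lift as a $W(k)$-morphism to the twist $\mathcal{X}^{(1)}$), while you reformulate the formal Frobenius as a compatible system of sections of the finite \'etale cover $\mathcal{V}\to\mathcal{Y}$ and algebraize the section; your fiber-product detour is correct but is a slightly more roundabout way of deploying the same formal-GAGA statement, since $\mathcal{V}$ is proper over $W(k)$ and the section is just a morphism between proper $W(k)$-schemes, so the direct algebraization of morphisms would have applied immediately.
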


\begin{proof}
First we construct a unique \(p\)-adic formal flat lifting \(\{(Y_n, F_{Y, n})\}_{n \geq 1}\) of \((Y, F_Y)\) which satisfies (1) and (2). Since $Y \to X$ is \'etale, it follows from \cite[Tag \href{https://stacks.math.columbia.edu/tag/08R2}{08R2} and Tag \href{https://stacks.math.columbia.edu/tag/08T3}{08T3}]{Stacks} that $\mathbb{L}_{Y/X} \simeq_{q.i.} 0$. We want to construct a unique family of surjective finite \'etale morphisms $f_n \colon Y_n \to X_n$ such that $Y_n$ is flat over $W_n(k)$, fitting into the commutative diagram with cartesian squares:
\[
\xymatrix@M=10pt{
Y_1\ar[r] \ar[d]^f   & \ar@{.}[r] & \ar[r] & Y_n \ar[r]\ar[d]^{f_n} & ? \ar[d] \\
X_1\ar[r] & \ar@{.}[r] & \ar[r] & X_n \ar[r] & X_{n+1} \\
}
\]
Suppose that we have constructed the desired $f_n:Y_n \to X_n$.

Let us construct $Y_{n+1}$. By \'etaleness of $f_n:Y_n \to X_n$, we have $\mathbb{L}_{Y_n/X_n} \simeq_{q.i.} 0$, which gives a morphism $f_{n+1}:Y_{n+1} \to X_{n+1}$ uniquely in view of \cite[Theorem A.4]{Z17} (\citeSta{08UZ} or \cite[Th\'eor\`eme 2.1.7 (ii)]{Ill1}) where $\mathcal{O}_{Y_{n+1}}$ is flat over \(W_{n+1}\) and is defined as an extension of the sheaf of ring $\mathcal{O}_{Y_n}$ by a square-zero ideal. By applying \citeSta{06AG}, we find that $f_{n+1}$ is finite \'etale, because $f_n$ is so. In particular, $\mathbb{L}_{Y_{n+1}/X_{n+1}} \simeq_{q.i.} 0$. This proves (1).

To show that \(\{Y_n\}_{n \geq 1}\) admits a lift of Frobenius, we use \citeSta{08U8} as \(X_1 \coloneqq X_n\), \(X_1' \coloneqq X_{n+1}\), \(S_1 \coloneqq Y_n\), and \(S_1' \coloneqq Y_{n+1}\): the \'etaleness of \(f_n\) implies the unique existence of a Frobenius lift \(\{F_{Y, n}\}_{n \geq 1}\) of \(\{Y_n\}_{n \geq 1}\) compatible with \(\{F_{X, n}\}\). This shows (2).

Next, we assume that \(X\) has a projective quasi-canonical lifting \((\mathcal{X}, \widetilde{F}_X)\) which gives an algebraization of \(\{(X_n, F_{X, n})\}_{n \geq 1}\).
  We already have a finite \'etale surjective morphism \(\{f_n\}_{n \geq 1} \colon \{(Y_n, F_{Y, n})\}_{n \geq 1} \to \{(X_n, F_{X, n})\}_{n \geq 1}\) which satisfies (1) and (2). Let \(\mathcal{L}\) be an ample line bundle relative to \(\mathcal{X} \to \Spec(W(k))\). There is a system $\{X_n,L_n\}_{n \ge 1}$ such that $X_n$ (resp. $L_n$) is the reduction of $\mathcal{X}$ (resp. $\mathcal{L}$) along $\Spec(W_n(k)) \to \Spec(W(k))$. So \(X_n\) is a flat proper \(W_n(k)\)-scheme and \(L_n\) is an ample line bundle over \(X_n\) by \citeSta{0892}. Let $L'_n \coloneqq f_n^*L_n$ be the pullback of $L_n$ along $f_n$. Then $L'_n$ is an ample line bundle because $f_n$ is a surjective finite (\'etale) surjective by \citeSta{0B5V}. Now the unique $p$-adic formal scheme $\{Y_n,L'_n\}_{n \ge 1}$ admits an algebraization $\mathcal{Y}$ and \(\mathcal{L}'\) where \(\mathcal{Y}\) is a projective scheme over $\Spec(W(k))$ and \(\mathcal{L}'\) is an ample line bundle on \(\mathcal{Y}\) by Grothendieck's algebraization theorem (\citeSta{089A}). Moreover, $f:Y \to X$ lifts uniquely to a morphism $\widetilde{f}:\mathcal{Y} \to \mathcal{X}$ of proper \(W(k)\)-schemes by \citeSta{0A42} (or \cite[Corollary 8.4.7]{I05}). In other words, $\widetilde{f}$ restricts to the finite \'etale morphism $f \colon Y \to X$. By \Cref{FiniteLiftings}, the lifting $\widetilde{f}$ is finite \'etale. This is an open map and thus $\widetilde{f}$ is a surjective finite \'etale morphism by \Cref{OpenNeighborhoodofClosedFiber}. This proves that $\{f_n\}_{n \ge 1}$ constructed in (1) is algebraizable.

It remains to prove the existence of a Frobenius lift on \(\mathcal{Y}\) and the commutativity in $(2)$. We can apply \cite[Corollary 8.4.7]{I05} to the case $Y:=\Spec(W(k))$, $X:=\mathcal{X}$ and $Z:=\mathcal{X}^{(1)}$, which is the base change of $\mathcal{X}$ with respect to the Witt-Frobenius morphism on $W(k)$. So $\varinjlim_n F_{X,n}$ on \(\varinjlim_n X_n \cong \widehat{\mathcal{X}}\) can be algebraized to give a unique Frobenius lift $\widetilde{F}_X$ on $\mathcal{X}$. Thus, we have constructed a quasi-canonical lifting $(\mathcal{Y},\widetilde{F}_Y)$ of $(Y,F_Y)$. Again using \cite[Corollary 8.4.7]{I05} to the case $Y:=\Spec(W(k))$, $X:=\mathcal{Y}$ and $Z:=\mathcal{X}^{(1)}$, we can prove the commutativity $\widetilde{f} \circ \widetilde{F}_Y=\widetilde{F}_X\circ \widetilde{f}$ by $f_n \circ F_{Y,n}=F_{X,n} \circ f_n$ for $n \in \mathbb{N}$. 
\end{proof}

We have a weak analogue of Proposition \ref{liftcanonical} in the logarithmic setting.

\begin{corollary}
\label{liftcanonical2}
Let $(X,D)$ be a smooth projective nc pair defined over an algebraically closed field $k$ of characteristic $p>0$. Assume that $(\mathcal{X},\mathcal{D},\widetilde{F}_X)$ is a quasi-canonical lifting of $(X,D,F_X)$. Let $f:(Y,D_Y) \to (X,D)$ be a surjective finite \'etale morphism with $D_Y:=f^{*}D$. Let $\widetilde{f}:\mathcal{Y} \to \mathcal{X}$ be as in Proposition \ref{liftcanonical} and let $\mathcal{D}_Y:=\widetilde{f}^*\mathcal{D}$. Then $(\mathcal{Y},\mathcal{D}_Y,\widetilde{F}_Y)$ is a quasi-canonical lifting of $(Y,D_Y,F_Y)$.
\end{corollary}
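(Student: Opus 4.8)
The plan is to feed $f \colon Y \to X$ into Proposition \ref{liftcanonical} to obtain $\mathcal{Y}$, $\widetilde{f}$ and $\widetilde{F}_Y$, and then to verify by hand the three further conditions that promote the pair $(\mathcal{Y}, \widetilde{F}_Y)$ to a quasi-canonical lifting of the \emph{triple} $(Y, D_Y, F_Y)$ with the divisor $\mathcal{D}_Y \coloneqq \widetilde{f}^{*}\mathcal{D}$.

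First I would check that Proposition \ref{liftcanonical} applies. Since $X$ is projective, Lemma \ref{LineBundleLift} lifts an ample line bundle on $X$ to a line bundle on the proper flat $W(k)$-scheme $\mathcal{X}$, which is then relatively ample; hence $(\mathcal{X}, \widetilde{F}_X)$ is a \emph{projective} quasi-canonical lifting, and it algebraizes the $p$-adic formal system $\{(X_n, F_{X,n})\}_{n \ge 1}$ given by its reductions modulo $p^{n}$. Proposition \ref{liftcanonical} then produces a projective quasi-canonical lifting $(\mathcal{Y}, \widetilde{F}_Y)$ of $(Y, F_Y)$ together with a surjective finite \'etale morphism $\widetilde{f} \colon \mathcal{Y} \to \mathcal{X}$ whose closed-fibre restriction is $f$ and which satisfies $\widetilde{f} \circ \widetilde{F}_Y = \widetilde{F}_X \circ \widetilde{f}$. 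By Definition \ref{canolift} and the logarithmic variant following it, what remains is to verify, for $\mathcal{D}_Y = \widetilde{f}^{*}\mathcal{D}$: (i) $\mathcal{D}_Y$ is a divisor with normal crossings relative to $\Spec(W(k))$; (ii) $\mathcal{D}_Y|_Y = D_Y$; and (iii) $\widetilde{F}_Y^{*}(\mathcal{D}_Y) = p\mathcal{D}_Y$.

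Condition (i) is Lemma \ref{pullbacknc}: $\widetilde{f}$ is finite \'etale, hence smooth, and $(\mathcal{X}, \mathcal{D})$ is an nc pair over $\Spec(W(k))$, so $(\mathcal{Y}, \widetilde{f}^{*}\mathcal{D})$ is an nc pair over $\Spec(W(k))$. Condition (ii) follows because the closed-fibre square is cartesian: $X = \mathcal{X} \times_{\Spec(W(k))} \Spec(k)$, so $\mathcal{Y} \times_{\mathcal{X}} X = \mathcal{Y} \times_{\Spec(W(k))} \Spec(k) = Y$ with the induced map $Y \to X$ equal to $f$; since $\mathcal{D}$ is a relative effective Cartier divisor and the relevant maps are flat, pulling it back to $\mathcal{Y}$ and restricting to $Y$ is the same as restricting $\mathcal{D}$ to $X$ and pulling back along $f$, giving $\mathcal{D}_Y|_Y = f^{*}(\mathcal{D}|_X) = f^{*}D = D_Y$. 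Condition (iii) is the computation
\[
\widetilde{F}_Y^{*}(\mathcal{D}_Y) = \widetilde{F}_Y^{*}\widetilde{f}^{*}(\mathcal{D}) = \widetilde{f}^{*}\widetilde{F}_X^{*}(\mathcal{D}) = \widetilde{f}^{*}(p\mathcal{D}) = p\,\widetilde{f}^{*}(\mathcal{D}) = p\,\mathcal{D}_Y ,
\]
which uses the commutativity $\widetilde{f} \circ \widetilde{F}_Y = \widetilde{F}_X \circ \widetilde{f}$ from Proposition \ref{liftcanonical} and the relation $\widetilde{F}_X^{*}(\mathcal{D}) = p\mathcal{D}$ contained in the hypothesis that $(\mathcal{X}, \mathcal{D}, \widetilde{F}_X)$ is a quasi-canonical lifting.

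The argument is essentially formal once Proposition \ref{liftcanonical} is in hand, so I do not expect a serious obstacle; the only delicate points are checking that $\widetilde{f}^{*}\mathcal{D}$ legitimately defines a relative effective Cartier divisor (immediate, as $\mathcal{D}$ is one and $\widetilde{f}$ is flat) and the functoriality bookkeeping in (ii), where cartesianness of the closed-fibre square is the essential input. It is also worth remarking why this is only a \emph{weak} analogue of Proposition \ref{liftcanonical}: one obtains a quasi-canonical lifting of $(Y, D_Y)$ only for the pulled-back divisor $D_Y = f^{*}D$, and any uniqueness is inherited solely through that of the pair $(\mathcal{Y}, \widetilde{f})$.
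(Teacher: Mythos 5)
Your proof is correct and follows essentially the same route as the paper: use Lemma \ref{pullbacknc} for the nc property of $\mathcal{D}_Y$, and the commutation $\widetilde{f} \circ \widetilde{F}_Y = \widetilde{F}_X \circ \widetilde{f}$ from Proposition \ref{liftcanonical} to get $\widetilde{F}_Y^{*}\mathcal{D}_Y = p\mathcal{D}_Y$ by pulling $\mathcal{D}$ back in two ways. You are a bit more thorough than the paper in explicitly verifying $\mathcal{D}_Y|_Y = D_Y$ and in flagging why the projectivity hypothesis of Proposition \ref{liftcanonical} is available (via Lemma \ref{LineBundleLift} together with the standard fact that a line bundle on a proper flat $W(k)$-scheme is relatively ample once its closed-fibre restriction is), but these are minor elaborations, not a different method.
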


\begin{proof}
By Proposition \ref{liftcanonical}, since $\widetilde{f}$ is finite \'etale, we see from Lemma \ref{pullbacknc} that $(\mathcal{Y},\mathcal{D}_Y)$ is an nc pair relative to $S=\Spec(W(k))$. So it remains to show that $\widetilde{F}_Y^*\mathcal{D}_Y=p\mathcal{D}_Y$. Again by Proposition \ref{liftcanonical}, we have $\widetilde{f} \circ \widetilde{F}_Y=\widetilde{F}_X\circ \widetilde{f}$. We pull back $\mathcal{D}$ in two different ways. First, we get
$$
(\widetilde{F}_X \circ \widetilde{f})^*\mathcal{D}=\widetilde{f}^*(\widetilde{F}_X^*\mathcal{D})=\widetilde{f}^*(p\mathcal{D})=p\mathcal{D}_Y.
$$
On the other hand,
$$
(\widetilde{f} \circ \widetilde{F}_Y)^* \mathcal{D}= \widetilde{F}_Y^*(\widetilde{f}^*\mathcal{D})=\widetilde{F}_Y^*\mathcal{D}_Y.
$$
Now we complete the proof.
\end{proof}

\begin{remark}
The reader might be curious to know if Corollary \ref{liftcanonical2} holds in the setting of logarithmic geometry. For example, if $f:(Y,\mathcal{M}_Y) \to (X,\mathcal{M}_X)$ is a log \'etale morphism, then the sheaf of log differentials $\Omega^{1,\log}_{Y/X}$ is zero (see \cite[Proposition 3.1.3]{Ogus18}). It will be an interesting problem to extend \cite[Variant 3.3.2]{AWZ21} to the framework of log schemes (see also \cite{Kato96} for the deformation theory of log smooth schemes).
\end{remark}

\section{A construction of Frobenius lifts via geometric methods}

\subsection{Finite \'etale quotients and fppf sheaves}
Let $S$ be a base scheme and let $f:X \to Y$ be a surjective \'etale morphism of $S$-schemes. Set $R:=X \times_Y X$ and $j:R \to X \times_S X$ be the natural morphism and let $p_i:X \times_S X  \to X$ ($i=1,2$) be the projection into the $i$-th factor. Then $j$ defines an \'etale equivalence relation (see \citeSta{022P} for relevant notions). Let $\mathrm{Sh}_{fppf}({\mathrm{Sch}}/S)$ be the category of sheaves of sets on the category of S-schemes with respect to fppf topology. Set $\sigma_i:=p_i \circ j$. Then it will be convenient to interpret the quotient $X \to Y$ as the coequalizer of the diagram: $\sigma_1,\sigma_2:R \rightrightarrows X$ (\Cref{algspacepre} below). Let $F:({\mathrm{Sch}}/S)_{fppf} \to \mathrm{Sets}$ be the coequalizer of this diagram. Equivalently, $F$ is the sheafification of the correspondence $U \in ({\mathrm{Sch}}/S)_{fppf} \mapsto X(U)/R(U)$. Then the following result on the presentation of the functor $F$ is essential.

\begin{proposition}
\label{algspacepre}
Let the notation and the hypotheses be as above. Namely, let \(S\) be a scheme and let \(f:X \to Y\) be a surjective \'etale morphism of \(S\)-schemes. Set \(R := X \times_Y X\) and the projections $\sigma_1, \sigma_2: R \rightrightarrows X$. Then $\sigma_1,\sigma_2$ are \'etale morphisms and the diagram $R \rightrightarrows X \to Y$ is a coequalizer diagram in the category of sheaves on $({\mathrm{Sch}}/S)_{fppf}$. If moreover $f:X \to Y$ is finite, then $\sigma_1,\sigma_2$ are finite.
\end{proposition}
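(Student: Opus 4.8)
The plan is to peel the statement into two independent parts: the claims about $\sigma_1,\sigma_2$ are pure base change, while the coequalizer claim is exactly the sheaf axiom for the fppf topology applied to the single covering $\{f\colon X\to Y\}$. First I would observe that the square
\[
\xymatrix@M=10pt{
R \ar[r]^{\sigma_2} \ar[d]_{\sigma_1} & X \ar[d]^{f} \\
X \ar[r]_{f} & Y
}
\]
is cartesian: by definition $R=X\times_Y X$, and the composite $p_i\circ j\colon R\to X\times_S X\to X$ is nothing but the $i$-th projection $\mathrm{pr}_i\colon X\times_Y X\to X$, so $\sigma_i=\mathrm{pr}_i$. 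Hence each $\sigma_i$ is the base change of $f\colon X\to Y$ along $f$ itself. Since \'etaleness is stable under base change, $\sigma_1$ and $\sigma_2$ are \'etale; and if $f$ is moreover finite, then $\sigma_1,\sigma_2$ are finite for exactly the same reason. This settles the first and the last sentences of the statement, leaving only the coequalizer assertion.

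For that I would use three standard inputs: (i) a surjective \'etale morphism is faithfully flat and locally of finite presentation, so $\{f\colon X\to Y\}$ is an fppf covering in $({\mathrm{Sch}}/S)_{fppf}$; (ii) by faithfully flat descent, the functors of points $h_X$, $h_Y$, $h_R$ are sheaves on $({\mathrm{Sch}}/S)_{fppf}$; and (iii) a fibre product of representable functors is representable, so the kernel pair of $h_X\to h_Y$ in $\mathrm{Sh}_{fppf}({\mathrm{Sch}}/S)$ is $h_X\times_{h_Y}h_X=h_R$. Granting these, I would verify the universal property directly: for an arbitrary object $T$ of $\mathrm{Sh}_{fppf}({\mathrm{Sch}}/S)$, the sheaf axiom for $T$ relative to the covering $\{f\colon X\to Y\}$, together with the identification $X\times_Y X=R$, yields an equalizer
\[
T(Y)\;\xrightarrow{\ \sim\ }\;\mathrm{Eq}\bigl(T(X)\rightrightarrows T(R)\bigr)
\]
whose two parallel maps are pullback along $\sigma_1$ and $\sigma_2$. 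Writing $T(Z)=\Hom_{\mathrm{Sh}_{fppf}}(h_Z,T)$ for $Z\in\{X,Y,R\}$ by Yoneda, the right-hand side becomes $\{\,g\in\Hom_{\mathrm{Sh}_{fppf}}(h_X,T)\mid g\circ\sigma_1=g\circ\sigma_2\,\}$, and the displayed bijection sends a morphism $h_Y\to T$ to its composite with $f$. Naturality in $T$ of this bijection is precisely the statement that $(h_Y,f)$ is the coequalizer of $\sigma_1,\sigma_2\colon h_R\rightrightarrows h_X$ in $\mathrm{Sh}_{fppf}({\mathrm{Sch}}/S)$.

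I do not expect a genuine obstacle: this is a shadow of the general fact that in a topos every epimorphism is the coequalizer of its kernel pair, and $f$ being an fppf covering is exactly the assertion that $h_X\to h_Y$ is an epimorphism of fppf sheaves with kernel pair $h_R$. The only points requiring care are checking that $\{f\colon X\to Y\}$ really is an fppf covering — which is where the surjectivity hypothesis on $f$ enters — and invoking faithfully flat descent so that $h_X,h_Y,h_R$ are honest sheaves; the rest is formal. Finally, since a coequalizer in $\mathrm{Sh}_{fppf}({\mathrm{Sch}}/S)$ is computed as the sheafification of the presheaf coequalizer $U\mapsto X(U)/R(U)$, the proposition recovers the description of the functor $F$ given immediately before its statement.
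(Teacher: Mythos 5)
Your proof is correct, and it is essentially the same argument that underlies the cited reference. The paper itself gives no proof beyond a pointer to \citeSta{0262}; your writeup fills in the content of that citation. The two halves of your argument are both sound: the observation that each $\sigma_i$ is literally the $i$-th projection $X\times_Y X\to X$, hence the base change of $f$ along $f$, gives \'etaleness (and finiteness, when $f$ is finite) for free; and the coequalizer claim is exactly the unwinding of the sheaf condition for the single-object fppf covering $\{f\colon X\to Y\}$, which by Yoneda is the universal property in $\mathrm{Sh}_{fppf}(\mathrm{Sch}/S)$. Your remark that this is the general topos-theoretic fact that fppf-epis are effective is the right conceptual framing. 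The one hypothesis you rightly flag as doing real work is surjectivity, which is what makes $\{X\to Y\}$ a covering (\'etale already gives flat and locally of finite presentation); without it $h_X\to h_Y$ need not be an epimorphism of sheaves and the coequalizer would be a proper subsheaf of $h_Y$.
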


We refer the reader to \citeSta{0262} for the proof of \Cref{algspacepre}. Under the above hypotheses, $\sigma_1,\sigma_2$ are \'etale morphisms, so the method of formal deformation theory via the cotangent complex works nicely for our purpose. Although we do not need algebraic spaces, we hope to generalize the main results of this article to the setting of deformation of formal algebraic spaces.\footnote{Such a study is essential in view of Matsumoto's example of a K3 surface having an integral model with good reduction only in the category of algebrac spaces. See \cite[Example 5.2]{Matsumoto15} for details.} Now let us turn our attention to the basic setting. We will be interested in the case where $X \to Y$ is a surjective finite \'etale morphism between projective varieties (see \Cref{HironakaExamp} below).
We need another lemma on the characterization of sheaf topos (known as Giraud's axioms).

\begin{lemma}
\label{Giraud}
Let $S$ be a scheme and let $R \rightrightarrows U \to X$ be a coequalizer in the category of sheaves of sets on $({\mathrm{Sch}}/S)_{fppf}$. For a scheme map $S' \to S$, the induced diagram
$$
R \times_S S' \rightrightarrows U \times_S S' \to X \times_S S'
$$
is a coequalizer in the category of sheaves of sets on $({\mathrm{Sch}}/S')_{fppf}$.
\end{lemma}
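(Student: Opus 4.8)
The plan is to identify the operation $(-)\times_S S'$ with the pullback (inverse image) functor attached to a morphism of fppf topoi, and then to use that such a functor is exact, hence commutes with coequalizers.

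First I would set up the localization. Regarding $S'$ as an object of $\mathrm{Sch}/S$ via the given morphism $S'\to S$, there is a canonical equivalence between $(\mathrm{Sch}/S')_{fppf}$ and the localized site $((\mathrm{Sch}/S)/S')_{fppf}$, hence an equivalence of topoi $\mathrm{Sh}_{fppf}(\mathrm{Sch}/S')\simeq \mathrm{Sh}_{fppf}(\mathrm{Sch}/S)/h_{S'}$. Under this identification the localization functor $j^{\ast}$ is the restriction $\mathcal{F}\mapsto\bigl(T'\mapsto\mathcal{F}(T')\bigr)$, where an $S'$-scheme $T'$ is viewed as an $S$-scheme through $T'\to S'\to S$; this is exactly what is meant by $\mathcal{F}\times_S S'$ together with its structure morphism to $h_{S'}$. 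I would then record the two facts I need from the theory of localization of sites (\cite[Sites, Section on localization]{Stacks}): (i) $j^{\ast}$ admits both a left adjoint $j_{!}$ and a right adjoint $j_{\ast}$, so in particular it is exact and preserves all colimits; (ii) for a scheme $T$ over $S$ one has $j^{\ast}h_{T}\cong h_{T\times_S S'}$, so on representable sheaves the functor $(-)\times_S S'$ is ordinary scheme-theoretic base change. Fact (ii) is what makes the notation in the statement unambiguous in the applications of this lemma, where $U$ and $R$ are honest schemes (e.g.\ $U=X'$ and $R=X'\times_{Y'}X'$, with $X'$ a base change of the situation in \Cref{algspacepre}).

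With this in hand the proof is immediate: apply $j^{\ast}$ to the coequalizer diagram $R\rightrightarrows U\to X$. Being a left adjoint, $j^{\ast}$ preserves coequalizers, so $j^{\ast}X$ is the coequalizer of $j^{\ast}R\rightrightarrows j^{\ast}U$ in $\mathrm{Sh}_{fppf}(\mathrm{Sch}/S')$; rewriting $j^{\ast}(-)=(-)\times_S S'$ yields precisely the assertion that $R\times_S S'\rightrightarrows U\times_S S'\to X\times_S S'$ is a coequalizer diagram on $(\mathrm{Sch}/S')_{fppf}$.

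I do not expect a serious obstacle: the content is entirely the identification of base change with a topos-theoretic pullback plus the formal exactness of localization functors. The only points needing a little care are (a) checking that the restriction functor $j^{\ast}$ really agrees with the fibre-product notation $(-)\times_S S'$ at the level of arbitrary sheaves, not merely representables --- this follows from the standard isomorphism $j_{!}j^{\ast}\mathcal{F}\cong\mathcal{F}\times h_{S'}$ --- and (b) the usual set-theoretic caveat that $\mathrm{Sch}/S$ and $\mathrm{Sch}/S'$ are formed inside a fixed universe so that $S'$ is a legitimate object of $\mathrm{Sch}/S$; both are routine. As an alternative, more hands-on route one could unwind the definition of a coequalizer of sheaves --- $X$ is the fppf-sheafification of the presheaf quotient $T\mapsto U(T)/R(T)$ --- observe that $(U\times_S S')(T')=U(T')$ and $(R\times_S S')(T')=R(T')$ for $S'$-schemes $T'$, and invoke that restriction along $S'\to S$ commutes with fppf-sheafification; but that amounts to re-proving the exactness of $j^{\ast}$ by hand, so the topos-theoretic argument above is the cleaner one.
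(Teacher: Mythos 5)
Your proposal is correct and matches the paper's argument: the paper's proof likewise reduces the claim to the exactness (colimit-preservation) of the base change/localization functor $-\times_S S'$ on fppf sheaves, citing \citeSta{0DTF} and Lurie, exactly the $j^{\ast}$ you describe. The extra care you take in identifying $j^{\ast}$ with the fibre-product notation is a reasonable elaboration of the same route, not a different one.
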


\begin{proof}
This follows from the exactness of the base change functor \(- \times_S S' : ({\mathrm{Sch}}/S)_{fppf} \to ({\mathrm{Sch}}/S')_{fppf}\) (see \citeSta{0DTF}). Another exposition can be found in \cite[Proposition 6.1.0.1]{L09}.\footnote{We use this result only for calculating coequalizer with finite \'etale equivalences. However, if the equivalence relation in consideration is not \'etale, the resulting coequalizer in the category of sheaves with respect to some topology is, in general, different from the coequalizer in the category of schemes. See also \Cref{HironakaExamp}.}   
\end{proof}

\subsection{Algebraization of formal schemes}

We need a lemma on the trace map for finite flat morphisms.

\begin{lemma}
\label{tracemap}
Let \(f \colon X \to Y\) be a surjective finite morphism of integral Noetherian schemes.
Assume that \(Y\) is normal and let \(\mathrm{Tr}_{K(X)/K(Y)} \colon K(X) \to K(Y)\) be the usual trace map of the finite field extension \(K(X)/K(Y)\) (see \citeSta{0BIE}).
Then \(\mathrm{Tr}_{K(X)/K(Y)}\) induces a map of $\mathcal{O}_Y$-modules: \(\mathrm{Tr}_f \colon f_*\mathcal{O}_X \to \mathcal{O}_Y\) under the natural inclusions \(\Gamma(f^{-1}(U), \mathcal{O}_X) \subseteq K(X)\) and \(\Gamma(U, \mathcal{O}_Y) \subseteq K(Y)\) for any affine open \(U \subseteq Y\).
Suppose further that $d \coloneqq [K(X):K(Y)]$ is invertible in $\Gamma(Y,\mathcal{O}_Y)$. Then the normalized trace $-\frac{1}{d}\mathrm{Tr}_f: f_*\mathcal{O}_X \to \mathcal{O}_Y$ is a splitting map of the natural map $f^\sharp: \mathcal{O}_Y \to f_*\mathcal{O}_X$.
\end{lemma}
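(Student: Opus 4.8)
The plan is to work affine-locally on $Y$ and reduce everything to the classical behaviour of the field trace under a finite extension of normal Noetherian domains. First I would fix an affine open $U = \Spec(A) \subseteq Y$. Since $f$ is finite, $f^{-1}(U) = \Spec(B)$ for a module-finite $A$-algebra $B$; since $X$ is integral and $f$ is surjective, $A \to B$ is injective, both $A$ and $B$ are Noetherian domains with $\Frac(A) = K(Y)$ and $\Frac(B) = K(X)$, and $K(X)/K(Y)$ is a finite field extension. Under these identifications $(f_*\mathcal{O}_X)(U) = B \subseteq K(X)$ and $\mathcal{O}_Y(U) = A \subseteq K(Y)$, and $f^\sharp$ restricted to $U$ is just the ring inclusion $A \hookrightarrow B$. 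Hence, to produce $\mathrm{Tr}_f$ it suffices to check that $\mathrm{Tr}_{K(X)/K(Y)}$ carries $B$ into $A$: $K(Y)$-linearity of the field trace then makes the resulting map $B \to A$ an $A$-module homomorphism, and, as $U$ varies over affine opens, these maps patch to a morphism of $\mathcal{O}_Y$-modules, because both sheaves have the evident inclusions into their function fields as transition maps and $\mathrm{Tr}_{K(X)/K(Y)}$ is one fixed map on $K(X)$.

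The heart of the argument — and the step I expect to be the only non-formal one — is the containment $\mathrm{Tr}_{K(X)/K(Y)}(B) \subseteq A$, which is exactly where normality of $Y$ enters, since it guarantees that $A$ is integrally closed in $K(Y)$. Given $b \in B$, it is integral over $A$, so it satisfies a monic polynomial over $A$; consequently the minimal polynomial $m_b(T) \in K(Y)[T]$, which divides that monic polynomial, has all its roots integral over $A$, and its coefficients — symmetric functions of those roots — are integral over $A$ and lie in $K(Y)$, hence lie in $A$ by normality. Since $K(X)$ is free of rank $[K(X):K(Y)[b]]$ over the field $K(Y)[b] \cong K(Y)[T]/(m_b)$, the characteristic polynomial of multiplication by $b$ on the $K(Y)$-vector space $K(X)$ equals $m_b(T)^{[K(X):K(Y)[b]]} \in A[T]$; in particular $\mathrm{Tr}_{K(X)/K(Y)}(b)$, which is up to sign the coefficient of $T^{d-1}$ in it, lies in $A$. (One may instead simply cite the standard fact that the characteristic polynomial of an element of a finite extension of a normal Noetherian domain has coefficients in the base ring.) This argument needs no separability hypothesis: when $K(X)/K(Y)$ is inseparable the trace vanishes identically, and in any case that situation is excluded once $d$ is assumed invertible in characteristic $p$.

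For the final assertion, suppose $d = [K(X):K(Y)]$ is invertible in $\Gamma(Y,\mathcal{O}_Y)$, hence in every such $A$. For $a \in A$, multiplication by $f^\sharp(a) = a$ on the $K(Y)$-vector space $K(X)$ is multiplication by the scalar $a$, so $\mathrm{Tr}_{K(X)/K(Y)}(a) = d\,a$; globalizing, $\mathrm{Tr}_f \circ f^\sharp$ is multiplication by $d$ on $\mathcal{O}_Y$. Since $d$ is a unit, $\tfrac{1}{d}\mathrm{Tr}_f$ (the normalized trace, up to the choice of sign in the statement) is therefore a left inverse of $f^\sharp$, i.e.\ a splitting of $\mathcal{O}_Y \to f_*\mathcal{O}_X$, which is exactly what is claimed. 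Apart from the normality input in the middle paragraph, every step is routine bookkeeping with function fields and affine charts.
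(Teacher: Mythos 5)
Your proof is correct and follows essentially the same route as the paper's: reduce to an affine chart $U = \Spec(A) \subseteq Y$, use normality of $A$ to show $\mathrm{Tr}_{K(X)/K(Y)}$ carries $B = \Gamma(f^{-1}(U),\mathcal{O}_X)$ into $A$, and then compute the trace of an element of $A$. The only difference is presentational: where the paper cites Stacks \citeSta{0BIH} for the trace formula and Swanson--Huneke for integrality of the coefficient $a_1$, you spell out that the characteristic polynomial of multiplication by $b$ is a power of the minimal polynomial, whose coefficients lie in $A$ by normality — the same content, just more self-contained. One substantive remark worth flagging: your computation $\mathrm{Tr}_{K(X)/K(Y)}(a)=da$ for $a\in A$ is the correct one, so the splitting is $\tfrac{1}{d}\mathrm{Tr}_f$ rather than $-\tfrac{1}{d}\mathrm{Tr}_f$; the minus sign in the paper's statement (and the claim in its proof that the trace of $x$ is $-dx$) contradicts the very formula $\mathrm{Tr}=-ea_1$ that the paper invokes, since for $x\in A$ the minimal polynomial is $T-x$ with $a_1=-x$ and $e=d$, giving $\mathrm{Tr}(x)=-d(-x)=dx$. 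So you were right to be suspicious of the sign, and you should drop the minus sign rather than hedge about it.
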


\begin{proof}
It suffices to check that for an open affine $U \subseteq Y$, the map $\mathrm{Tr}_f(U)$ is well-defined. Since $f$ is an affine morphism, $f^{-1}(U)$ is affine. So we may assume that $U:=\Spec(R)$ is a Noetherian normal domain and $R \to S:= \Gamma(f^{-1}(U), \mathcal{O}_X)$ is a module-finite extension. Moreover, we have $\Frac(R)=K(Y)$ and $\Frac(S)=K(X)$. Let $x \in S$ and let $T^n + a_1T^{n-1} + \cdots + a_n \in K(Y)[T]$ be the minimal polynomial of $x$ over $K(Y)$ with \(en = [K(X): K(Y)]\). Then we need to show that $\mathrm{Tr}_{K(X)/K(Y)}(x) = -ea_1 \in R$ by \citeSta{0BIH}. Since $R$ is integrally closed in $K(Y)$, it follows that $a_1 \in R$ in view of \cite[Theorem 2.1.17]{SwHu}. Now we see that the restriction of the $K(Y)$-module map \(\mathrm{Tr}_{K(X)/K(Y)}\) to $U=\Spec(R)$ is the $R$-module map \(\mathrm{Tr}_f(U)\), so we get a well-defined map \(\mathrm{Tr}_f\). 

If we take \(x \in \Gamma(U, \mathcal{O}_Y) = R \subseteq S\), then its minimal polynomial over \(K(Y)\) is \(T - x\) and the trace is \(-[K(X):K(Y)]x\). This shows that the normalized trace map is a splitting map of the natural map \(f^\sharp\).
\end{proof}

\begin{lemma}[{cf. \cite[XVIII Théorème 2.9(Var 4)(I)]{SGAIV}}]
\label{TraceTimesd}
Let \(f \colon X \to Y\) be a surjective finite \'etale morphism of constant degree \(d\) between Noetherian schemes. Then the \emph{trace map} \(\mathrm{Tr}_f \colon f_*\mathcal{O}_X \to \mathcal{O}_Y\) exists which is a map of $\mathcal{O}_Y$-modules and the composition
\begin{equation*}
\mathcal{O}_Y \xrightarrow{f^\sharp} f_*\mathcal{O}_X \xrightarrow{\mathrm{Tr}_f} \mathcal{O}_Y
\end{equation*}
is multiplication by \(d\).
\end{lemma}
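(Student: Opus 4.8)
The plan is to build $\mathrm{Tr}_f$ as the fibrewise linear-algebra trace of multiplication operators on the sheaf $f_*\mathcal{O}_X$, and then read off the composition from the rank. The first step is to observe that a finite \'etale morphism is finite locally free, so $f_*\mathcal{O}_X$ is a finite locally free $\mathcal{O}_Y$-module; the hypothesis that $f$ has constant degree $d$ (in the sense of the footnote above: \'etale-locally on $Y$ we have $f = \bigsqcup_{j=1}^{d} Y \to Y$ by \citeSta{04HN}) forces this module to be of rank exactly $d$ at every point of $Y$.

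Next I would define the trace locally. For an open $U \subseteq Y$ and a section $s \in \Gamma(U, f_*\mathcal{O}_X) = \Gamma(f^{-1}(U), \mathcal{O}_X)$, multiplication by $s$ is an $\mathcal{O}_Y(U)$-linear endomorphism $m_s$ of the finite projective rank-$d$ module $\Gamma(U, f_*\mathcal{O}_X)$; set $\mathrm{Tr}_f(U)(s) \coloneqq \operatorname{tr}(m_s)$, where $\operatorname{tr}$ is the trace of an endomorphism of a finite projective module, defined after passing to a free localization and checked to be independent of the trivialization. Since $m_{s + s'} = m_s + m_{s'}$ and $m_{as} = a\, m_s$, and $\operatorname{tr}$ is additive and linear in the operator, the map $s \mapsto \mathrm{Tr}_f(U)(s)$ is $\mathcal{O}_Y(U)$-linear; being compatible with restriction to smaller opens and with localization, these maps glue to an $\mathcal{O}_Y$-linear morphism $\mathrm{Tr}_f \colon f_*\mathcal{O}_X \to \mathcal{O}_Y$. (Alternatively one may reduce to the split case $f = \bigsqcup_{j=1}^{d} Y \to Y$, where the assertion is immediate, and then glue; or simply invoke the trace map of \cite[XVIII Th\'eor\`eme 2.9]{SGAIV}.)

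Finally, for the composition: $f^\sharp$ sends the section $1$ of $\mathcal{O}_Y$ to the section $1$ of $f_*\mathcal{O}_X$, and $m_1$ is the identity endomorphism of a projective module of rank $d$, whose trace is $d$. Hence $\mathrm{Tr}_f(f^\sharp(a)) = \mathrm{Tr}_f(a \cdot 1) = a\, \mathrm{Tr}_f(1) = a d$ for every local section $a$ of $\mathcal{O}_Y$, so $\mathrm{Tr}_f \circ f^\sharp$ is multiplication by $d$. I do not expect any genuine obstacle here; the only point deserving a word of care is that the locally defined traces are independent of the chosen trivializations of $f_*\mathcal{O}_X$ and therefore glue, which is standard because the trace of an endomorphism of a finite projective module is intrinsic.
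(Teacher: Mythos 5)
Your proof is correct, but it takes a genuinely different route from the paper. You construct $\mathrm{Tr}_f$ as the elementary linear-algebra trace of the multiplication endomorphism $m_s$ on the finite locally free rank-$d$ $\mathcal{O}_Y$-module $f_*\mathcal{O}_X$, and then read off $\mathrm{Tr}_f(1)=d$ from the trace of the identity on a rank-$d$ projective module. The paper instead works in $D_{\mathrm{q.coh}}$ and defines $\mathrm{Tr}_f$ as the counit of the adjunction $Rf_*\dashv f^\times$, using that $f^!=f^\times=f^*$ because $f$ is proper and \'etale, and then checks the composition is multiplication by $d$ by reducing via flat base change to the split case $\bigsqcup_{i=1}^d Y \to Y$. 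Your approach buys simplicity: it avoids derived categories and coherent duality entirely, and in fact only needs $f$ to be finite locally free of rank $d$ (the \'etale hypothesis is not used), whereas the paper's route leans on \'etaleness to identify $f^\times$ with $f^*$. The paper's approach has the advantage of producing the trace as a map of functors that manifestly commutes with flat base change and fits into the six-functor formalism, which can matter if the trace needs to interact with the duality machinery elsewhere; for the present lemma, which only asserts that the composition is multiplication by $d$, either construction serves, since the two notions of trace agree for finite locally free morphisms (this is the content of the SGA~4~XVIII compatibility the paper cites).
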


\begin{proof}
In this setting, the derived pushforward functor \(Rf_* \colon D_{\mathrm{q.coh}}(X) \to D_{\mathrm{q.coh}}(Y)\) has the right adjoint functor \(f^\times\). This notation is based on \cite[Theorem 25.17]{GW2}. Since \(f\) is proper and \'etale, $f^! = f^\times = f^*$ by \cite[Theorem and Definition 25.61 and Corollary 25.69]{GW2}. Therefore, \(f^{\times}\mathcal{O}_{Y} \simeq_{q.i.} \mathcal{O}_{X}\). Note that any affine morphism has the vanishing of the higher direct images (see, for example, \citeSta{01XC}). It follows from \cite[(25.3.2)]{GW2} that there is a well-defined \emph{trace} ${\mathrm{Tr}_f}:f_*\mathcal{O}_{X} = Rf_*f^\times\mathcal{O}_Y \to \mathcal{O}_{Y}$. By \cite[Reminder 25.24 and Theorem 25.31]{GW2}, \(f^\times\) (and thus \(\mathrm{Tr}_f\)) commutes with any pullback of flat morphisms \(U \to Y\). So we can assume that \(f\) is the canonical map \(X = \bigsqcup_{i=1}^d Y \to Y\) since \(f\) is of constant degree \(d\). For each \(i = 1, \dots, d\), the inclusion \(\iota_i \colon Y \hookrightarrow \bigsqcup_{i=1}^d Y\) makes a commutative diagram
\begin{center}
\begin{tikzcd}
{f_i}_* \circ f_i^\times \arrow[d, "\mathrm{Tr}_{f_i}"] \arrow[r, Rightarrow, no head] & f_* \circ {\iota_i}_* \circ {\iota_i}^\times \circ f^\times \arrow[d, "\mathrm{Tr}_{\iota_i}"] \\
\id                                                                                             & f_* \circ f^\times \arrow[l, "\mathrm{Tr}_f"]                                        
\end{tikzcd}
\end{center}
with \(f_i \coloneqq f \circ \iota_i\) by applying \cite[Proposition 25.19 and Proposition 21.41]{GW2} and the adjunction \(f_* \dashv f^! = f^*\). Indeed the composition \(f_* \circ {\iota_i}_* \circ \iota_i^\times \circ f^\times \to f_* \circ f^\times \to \id\) corresponds to \({\iota_i}_* \circ \iota_i^\times \circ f^\times \to f^\times \xrightarrow{\id} f^\times\) by the adjunction \(f_* \dashv f^\times\) and this corresponds to the identity map on \(f_i^\times = \iota_i^\times \circ f^\times\) by the adjunction \({\iota_i}_* \dashv \iota_i^\times\). Here, \(\mathrm{Tr}_{f_i}\) also corresponds to this identity map by the adjunction \({f_i}_* \dashv f_i^\times\) and the above diagram is commutative.
  The map \(f_i \colon Y \to Y\) is the identity map \(\id_{Y}\) and the source \(f_*f^*\mathcal{O}_{Y} = f_*\mathcal{O}_X\) of \(\mathrm{Tr}_f\) is \(\prod_{i=1}^d \mathcal{O}_{Y}\).
  So the trace map \(\mathrm{Tr}_f \colon f_*\mathcal{O}_{X} \to \mathcal{O}_{Y}\) is given by the summation map \(\oplus_{i=1}^d \mathcal{O}_{Y} \to \mathcal{O}_{Y}\). This shows that the composite mapping
$$
\mathcal{O}_{Y} \xrightarrow{f^\sharp} f_* \mathcal{O}_{X} \xrightarrow{\mathrm{Tr}_f} \mathcal{O}_{Y} 
$$
is multiplication by $d$.
\end{proof}

Let us prove the first main result, which will be quite useful for constructing new classes of projective varieties over a field of characteristic $p>0$ with a quasi-canonical lifting. Notice that there is a classical result opposite to what is to be proven below (see \citeSta{09ZT}).

\begin{proof}[Proof of Main Theorem \ref{algquotient}]
$(1)$: We aim to construct a tower $\{Y_n\}_{n \ge 1}$ inductively. Fix $n \ge 1$. Suppose that the pullback diagram
$$
\vcenter{
\xymatrix@M=10pt{ 
Z_{j-1} \ar[d]_{f_{j-1}} \ar[r] &Z_j  \ar[d]_{f_j}\\
Y_{j-1} \ar[r] &Y_j \\
}}
$$
satisfying the properties $(a)$ and $(b)$ of Proof of Main Theorem \ref{algquotient} has been constructed for all $j \leq n$. By assumption, there is a flat $S_{n+1}$-scheme $Z'_{n+1}$ and a morphism $Z_n \to Z'_{n+1}$ such that $Z_n \cong Z'_{n+1} \times_{S_n} S_{n+1}$. Take a map of deformation tuples as in \cite[Definition A.3 and Definition A.5]{Z17}
\begin{equation}
\label{commcotangent1}
  \begin{tikzcd}
    Z_n \arrow[r] \arrow[d, "a_{Z_n}"]      &    Z'_{n+1} \arrow[d]                \\
    S_n \arrow[r] \arrow[d] & S_{n+1} \arrow[ld] \\
    S_{n+1}                      &                   
  \end{tikzcd} \xrightarrow{\footnotesize{\mbox{induced by}~f_n}} \begin{tikzcd}
    Y_n \arrow[d, "a_{Y_n}"]      &                    \\
    S_n \arrow[r] \arrow[d] & S_{n+1} \arrow[ld] \\
    S_{n+1}                    &                   
  \end{tikzcd}
\end{equation}
with maps of \(\mathcal{O}_{Z_n}\)-modules \(a_{Z_n}^*(p^n\mathcal{O}_{S_n}) \to p^n\mathcal{O}_{Z_n}\) on \(Z_n\) and \(\mathcal{O}_{Y_n}\)-modules \(a_{Y_n}^*(p^n\mathcal{O}_{S_n}) \to p^n\mathcal{O}_{Y_n}\) on \(Y_n\) induced from \(a_{Z_n}\) and \(a_{Y_n}\) respectively.
In the derived category $D_{\mathrm{q.coh}}(Y_n)$, we have the following commutative diagram (see \cite[Lemma A.6 and Lemma 3.1]{Z17}) associated to $(\ref{commcotangent1})$:
$$
\vcenter{\xymatrix@M=30pt{
\mathbb{L}_{Y_n/S_n} \ar[r]^{\ob(Y_n)} \ar[d]_{df_n} & p^n\mathcal{O}_{Y_n}[2] \ar@{^{(}->}[r] \ar[d]_{f_n^*[2]} & \mathcal{O}_{Y_n}[2] \ar[d]_{f_n^*[2]} \\
{Rf_{n}}_*\mathbb{L}_{Z_n/S_n} \ar[r]^{{Rf_n}_*\ob(Z_n)} & {Rf_n}_*(p^n\mathcal{O}_{Z_n}[2]) \ar@{^{(}->}[r] & {Rf_n}_*(\mathcal{O}_{Z_n}[2])
}}
$$
We want to prove that the obstruction $\ob(Y_n)$ vanishes. By our assumption that $Z_n \to Z'_{n+1}$ exists, we have $\ob(Z_n)=0$ and thus, it suffices to check that $f_n^*$ splits in $D_{\mathrm{q.coh}}(Y_n)$. By Leray spectral sequence, we have ${Rf_n}_*\mathcal{O}_{Z_n} \simeq_{q.i.} {f_n}_*\mathcal{O}_{Z_n}$. We know that $f_n:Z_n \to Y_n$ is a surjective finite \'etale morphism which has constant degree $d$ by the induction hypothesis (b). 
By \Cref{TraceTimesd}, we have a trace map $\mathrm{Tr}:{f_n}_*\mathcal{O}_{Z_n} \to \mathcal{O}_{Y_n}$ such that the composite mapping
$$
\mathcal{O}_{Y_n} \to {f_n}_* \mathcal{O}_{Z_n} \xrightarrow{\mathrm{Tr}} \mathcal{O}_{Y_n} 
$$
is multiplication by $d$: 
Note that $d$ is a unit in $\mathcal{O}_{Y_n}$ because \(d = [K(X): K(Y)]\) is not divisible by \(p\). It follows that $f_n^*$ and in particular, $f_n^*[2]$ is injective, which gives $\ob(Y_n)=0$. By \cite[Theorem A.4]{Z17} or \cite[Th\'eor\`eme 2.1.7 (ii)]{Ill1}, there exist a flat $S_{n+1}$-scheme $Y_{n+1}$ and a commutative diagram of schemes
\begin{center}
  \begin{tikzcd}
    Y_n \arrow[d] \arrow[r] & Y_{n+1} \arrow[d]  \\
    S_n \arrow[r] \arrow[d] & S_{n+1} \arrow[ld] \\
    S_{n+1}                    &                   
  \end{tikzcd}
\end{center}
such that \(Y_n \cong Y_{n+1} \times_{S_{n+1}} S_n\). Now the existence of $Y_{n+1}$ is established.\footnote{There may not exist a morphism from $Z'_{n+1}$ to $Y_{n+1}$.}
Take a deformation tuple as in \cite[Definition A.3]{Z17}:
\begin{center}
  \begin{tikzcd}
    Z_n \arrow[d, "f_n"]      &                    \\
    Y_n \arrow[r] \arrow[d] &Y_{n+1} \arrow[ld] \\
    S_{n+1}                      &                   
  \end{tikzcd}
\end{center}
with a map of \(\mathcal{O}_{Z_n}\)-modules \(f_n^*(p^n\mathcal{O}_{Y_n}) \to p^n\mathcal{O}_{Z_n}\) on \(Z_n\) induced from \(f_n\).

Then by \cite[Theorem A.4]{Z17} and the \'etaleness of \(f_n\), there is $Z_{n+1}$\footnote{It may differ from $Z'_{n+1}$ in general.} so that we have a pullback diagram
$$
\vcenter{
\xymatrix@M=10pt{ 
Z_{n} \ar[d]_{f_{n}} \ar[r] &Z_{n+1}  \ar[d]_{f_{n+1}}\\
Y_{n} \ar[r] &Y_{n+1} \\
}}
$$
where $Z_{n+1}$ is flat over $S_{n+1}$ and $Z_n \cong Z_{n+1} \times_{S_{n+1}} S_n$: the last isomorphism follows from \(Y_{n+1} \times_{S_{n+1}} S_n \cong Y_n\).
Moreover, we have $\mathbb{L}_{Z_n/Y_n} \simeq_{q.i.} 0$ by \'etaleness of $f_n$. Now \citeSta{08UZ} implies the uniqueness of $f_{n+1}:Z_{n+1} \to Y_{n+1}$ as a (flat) lifting of $f_n$. It follows from \citeSta{06AG} that $f_{n+1}$ is finite \'etale since \(Z_{n+1}\) is flat over \(S_{n+1}\). Since the reduced part of $f_{n+1}$ is identified with $f$, it follows that $f_{n+1}$ is surjective and has constant degree $d$. Hence we have the desired \(p\)-adic formal scheme \(\{Y_n\}_{n \geq 1}\) and the morphism \(\{f_n\}_{n \geq 1}\) satisfying the properties $(a)$ and $(b)$.

$(2)$: Assume that $\mathcal{Z}$ is an algebraization of the $p$-adic formal scheme $\{Z_n\}_{n \ge 0}$ which is a flat projective \(W(k)\)-scheme. Then there is an ample line bundle $\mathcal{L}$ over $\mathcal{Z}$. Write $L_n$ for the pullback of $\mathcal{L}$ along $Z_n \hookrightarrow \mathcal{Z}$ which is also an ample line bundle over \(Z_n\) by \citeSta{0892}. Now consider $\{Y_n\}_{n \ge 1}$. Let us check that $Y_n$ is a flat projective $S_n$-scheme. Flatness was already verified above. Here is one way to see the projectivity. Set $R_n:=Z_n \times_{Y_n} Z_n$. Then we have a morphism $R_n \to Z_n \times_{S_n} Z_n$. By composing this with the projections $p_1,p_2: Z_n \times_{S_n} Z_n \rightrightarrows Z_n$, we get $\sigma_{1},\sigma_{2}:R_n \rightrightarrows Z_n$. Then this defines a finite \'etale equivalence relation $X_n$ over $S_n$ in view of \citeSta{022P, Tag 02WS, Tag 0262}. The resulting quotient sheaf $Z_n/R_n$ taken in the category $\mathrm{Sh}_{fppf}({\mathrm{Sch}}/S_n)$ is isomorphic to $Y_n$ by \Cref{algspacepre}. 
The base change \(Z_n \to S_n\) of the projective morphism \(\mathcal{Z} \to \Spec(W(k))\) is projective. Then a result of Altman-Kleiman \cite{AK80} (see \cite[Theorem 5.25]{N05} for a readable account) shows that $Y_n$ is a quasi-projective $S_n$-scheme. Since \(Z_n\) is proper over \(S_n\), using \cite[Proposition 12.59]{GW1}, we can show that \(Y_n\) is proper over \(S_n\) and thus it is projective over \(S_n\).

We want to show that $\{Y_n\}_{n \ge 1}$ admits an algebraization. To this aim, consider the norm $E_n:=\Nr_{f_n}(L_n)$ defined from \(f_n \colon Z_n \to Y_n\) (see \citeSta{0BD3} for the existence of norm in our setting and \citeSta{0BCY} for the construction and properties of $\Nr_{f_n}$). Another reference is \cite[Remark 12.25 and Exercise 12.25]{GW1}.
Since $L_n$ is ample and $f_n$ is finite, the proof of \citeSta{0BD0} claims that $E_n$ is ample on $Y_n$. So $\{E_n\}_{n \ge 1}$ forms a $p$-adic formal ample invertible sheaf. Finally \citeSta{089A} (or \cite[Theorem 8.4.10]{I05}) provides an algebraization $\mathcal{Y}$ and \(\mathcal{E}\), where \(\mathcal{E}\) is an ample line bundle over \(\mathcal{Y}\) and \(\mathcal{Y}\) is a projective scheme over \(\Spec(W(k))\).
Also, the morphism \(\{f_n\}_{n \geq 1}\) of \(p\)-adic formal schemes is uniquely algebraizable by \citeSta{0A42}. So there exists a morphism $\mathcal{Z} \to \mathcal{Y}$ of proper \(W(k)\)-schemes which reduces to the finite \'etale morphism $f \colon X \to Y$ along $\Spec(k) \to \Spec(W(k))$, and $\mathcal{Z} \to \mathcal{Y}$ is a finite \'etale surjection in view of \Cref{FiniteLiftings} and \Cref{OpenNeighborhoodofClosedFiber}.
\end{proof}

\begin{corollary}
\label{LiftingCor}
Let $X$ be a smooth projective variety over a perfect field $k$ of characteristic $p>0$. If $H^2(X,T_{X})=H^2(X,\mathcal{O}_X)=0$, then \(X\) satisfies the assumption of \Cref{algquotient} and it admits a flat projective lifting \(\mathcal{X}\) over $W(k)$.
In particular, let $X \to Y$ be a surjective finite \'etale morphism such that $[K(X):K(Y)]$ is not divisible by $p$. 
Then there is a finite \'etale surjective morphism $\mathcal{X} \to \mathcal{Y}$ of smooth projective $W(k)$-schemes which is a flat lifting of $X \to Y$ along $\Spec(k) \to \Spec(W(k))$. In particular, $Y$ admits a flat projective lifting over $W(k)$.
\end{corollary}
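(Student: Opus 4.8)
The plan is to verify that $X$ meets the condition in \Cref{algquotient}(1) by a cotangent-complex obstruction computation, then algebraize the resulting $p$-adic formal lifting by transporting an ample line bundle along the tower, and finally invoke \Cref{algquotient}(2) to descend everything to $Y$. For the first step, let $X = X_1 \to X_2 \to \cdots \to X_k$ be any tower with $X_j$ flat over $S_j$ and $X_j \cong X_{j+1}\times_{S_{j+1}}S_j$. A flat lifting of the finite-type $k$-scheme $X$ over the Artinian local ring $W_j(k)$ is again of finite type, and the unique fibre of $X_j \to S_j$ is $X$, which is smooth over $k$, so each $X_j$ is smooth over $S_j$; in particular $\mathbb{L}_{X_j/S_j}\simeq\Omega^1_{X_j/S_j}$ is locally free. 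The square-zero extension $S_k \hookrightarrow S_{k+1}$ has ideal $I := p^kW_{k+1}(k)$, which is isomorphic to $k$ as a $W_k(k)$-module because $k$ is perfect; hence $\mathcal{O}_{X_k}\otimes_{\mathcal{O}_{S_k}}I \cong \mathcal{O}_{X_k}/p\mathcal{O}_{X_k} \cong \mathcal{O}_X$, and the obstruction to a flat $S_{k+1}$-lift of $X_k$ lies in $\Ext^2_{\mathcal{O}_{X_k}}\big(\Omega^1_{X_k/S_k},\,\mathcal{O}_{X_k}\otimes_{\mathcal{O}_{S_k}}I\big)\cong H^2(X,T_X)=0$. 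Thus $X_{k+1}$ exists by \cite[Theorem A.4]{Z17} (equivalently \cite[Th\'eor\`eme 2.1.7 (ii)]{Ill1}), which is exactly the condition in \Cref{algquotient}(1); iterating it produces a $p$-adic formal lifting $\{X_n\}_{n\ge1}$ of $X$ by smooth proper $S_n$-schemes, and the same computation applies to the tower $\{Z_n\}$ obtained when \Cref{algquotient}(1) is fed the morphism $f$.

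Next I would algebraize such a tower. Fix an ample line bundle $L=L_1$ on the projective variety $X$ and lift it inductively along the tower: the obstruction to extending $L_n$ from $X_n$ to $X_{n+1}$ sits in $H^2(X_n,\mathcal{O}_{X_n}\otimes_{\mathcal{O}_{S_n}}I)\cong H^2(X,\mathcal{O}_X)=0$, via the truncated exponential sequence $0\to\mathcal{O}_X\otimes I\to\mathcal{O}_{X_{n+1}}^{\times}\to\mathcal{O}_{X_n}^{\times}\to1$ (see \cite[Proposition 24.95]{GW2}, or \cite[Appendix: Canonical liftings, Proposition 2]{MS87}), and each $L_{n+1}$ remains ample because ampleness is insensitive to nilpotent thickenings (\citeSta{0892}). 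Hence $\{(X_n,L_n)\}$ is a $p$-adic formal scheme carrying a formal ample line bundle, and Grothendieck's existence theorem (\citeSta{089A}, or \cite[Theorem 8.4.10]{I05}) algebraizes it to a flat \emph{projective} $W(k)$-scheme $\mathcal{X}$. This $\mathcal{X}$ is smooth over $W(k)$ by the fibral criterion for smoothness: it is flat over $W(k)$, its closed fibre is the smooth variety $X$, and its generic fibre is smooth by generic smoothness in characteristic $0$ (see \cite[Proposition 3.11]{Sh17}, as in the proof of \Cref{FlatCanonical}). This settles the first two assertions.

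For the last assertion, given the surjective finite \'etale $f\colon X\to Y$ with $[K(X):K(Y)]$ prime to $p$, I would apply \Cref{algquotient}(1) to $f$, algebraize the resulting tower $\{Z_n\}$ (with $Z_1 = X$) to a flat projective $W(k)$-scheme $\mathcal{Z}$ exactly as above, and feed $\mathcal{Z}$ into \Cref{algquotient}(2). That produces: each $Y_n$ is flat projective over $S_n$, the colimit $\varinjlim_n Y_n$ algebraizes to a flat projective $W(k)$-scheme $\mathcal{Y}$, and $\widetilde{f}\colon\mathcal{Z}\to\mathcal{Y}$ is a finite \'etale surjection whose reduction modulo $p$ is $f$; we set $\mathcal{X}:=\mathcal{Z}$. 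Smoothness of $\mathcal{Y}$ over $W(k)$ follows from the fibral criterion once more: $\mathcal{Y}$ is flat over $W(k)$, its closed fibre $Y$ is smooth over $k$ (it is covered by the smooth $X$ through a finite \'etale surjection, and regularity passes down a flat local homomorphism of Noetherian local rings), and its generic fibre is smooth by generic smoothness. Thus $Y$ admits a smooth projective flat lifting over $W(k)$, as asserted.

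The point I expect to require the most care is the bookkeeping in the first two paragraphs: one must be sure that every $X_j$ in an \emph{arbitrary} admissible tower is genuinely smooth over $S_j$, and that the coefficient module $p^nW_{n+1}(k)$ of the relevant square-zero extension is $\cong k$ (using perfectness of $k$), so that the scheme-deformation obstruction truly lands in $H^2(X,T_X)$ and the line-bundle obstruction in $H^2(X,\mathcal{O}_X)$ rather than in twists of these groups. Granting these identifications, the remainder is an assembly of \Cref{algquotient}, Illusie's deformation theory, ``ampleness modulo nilpotents'', Grothendieck's existence theorem, and standard facts about descent of smoothness.
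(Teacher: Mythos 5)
Your proposal is correct and follows essentially the same route as the paper's proof: verify the hypothesis of \Cref{algquotient}(1) via vanishing of the scheme-deformation obstruction in $H^2(X,T_X)$, algebraize the tower by lifting an ample line bundle using the vanishing of the line-bundle obstruction in $H^2(X,\mathcal{O}_X)$ together with Grothendieck's existence theorem, and then feed the resulting $\mathcal{Z}$ into \Cref{algquotient}(2). The paper compresses the two obstruction computations into citations of \cite[Remark 3.27]{BT24} and \cite[Corollary 8.5.6]{I05}, whereas you spell them out directly from the cotangent complex and the truncated exponential sequence (correctly noting that perfectness of $k$ gives $p^nW_{n+1}(k)\cong k$, so the obstruction groups are untwisted); you also explicitly verify smoothness of $\mathcal{X}$ and $\mathcal{Y}$ over $W(k)$ via the fibral criterion and descent of regularity along the flat finite map, a point the paper leaves implicit.
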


\begin{proof}
We check the condition of \Cref{algquotient}. Let $X=X_1 \to X_2 \to \cdots \to X_k$ be a sequence such that $X_j$ is a flat $S_j$-scheme and $X_j \cong X_{j+1} \times_{S_{j+1}} S_j$ for any $1 \le j \le k$. Then we want to construct $X_k \to X_{k+1}$ such that $X_{k+1}$ is a flat $S_{k+1}$-scheme and $X_{k} \cong X_{k+1} \times_{S_{k+1}} S_k$. 
By \cite[Remark 3.27]{BT24}, the obstruction class \(\ob(X_k)\) lies in \(H^2(X, T_X)\), which vanishes by assumption.
Hence we get $X_{k+1} \to S_{k+1}$ extending $X_k \to S_k$. Since $X$ is projective, then \cite[Corollary 8.5.6]{I05} and the vanishing \(H^2(X, \mathcal{O}_X) = 0\) will give a projective flat scheme $\mathcal{X}$ over $W(k)$ such that $\widehat{\mathcal{X}} \cong \varinjlim_n Z_n$.

By \Cref{algquotient}, there is a $p$-adic formal scheme $Y=Y_1 \to Y_2 \to \cdots$ and there is a $p$-adic formal scheme $X=Z_1 \to Z_2 \to \cdots$ compatible with $Y=Y_1 \to Y_2 \to \cdots$. So it follows from the second assertion of \Cref{algquotient} that $X \to Y$ lifts to $\mathcal{X} \to \mathcal{Y}$, where $\mathcal{Y}$ is the algebraization of $Y=Y_1 \to Y_2 \to \cdots$, which completes the proof of the corollary.
\end{proof}

\begin{remark}
\label{HironakaExamp}
\begin{enumerate}
\item
In connection with the hypothesis of \Cref{algquotient} (2), one might wonder if it is possible to deal with an arbitrary finite \'etale quotient of a proper scheme in the category of schemes. However, this is not necessarily true. In \cite[Example B.3.4.2]{H83}, Hironaka constructed an example of a 3-dimensional non-projective, complete complex variety $X$ which has a fixed-point free involution $\sigma:X \to X$. Then the quotient $X \to X/\langle\sigma\rangle$ is finite \'etale and $X/\langle\sigma\rangle$ is a Moishezon manifold which is not a scheme. In other words, the field of meromorphic functions of $X/\langle\sigma\rangle$ has $\mathbb{C}$-transcendence degree equal to $\dim X$. For more examples, we refer the reader to \cite[Example 14]{K12}. See also \citeSta{0AGG} for the deviation of the presentation of $Y$ as a sheaf $X/R$ beyond the \'etale equivalence relations.

\item 
Even if $X_n \to S_n$ is proved to be proper for $n \ge 0$, the formal scheme $\{X_n\}_{n \ge 0}$ is not necessarily algebraizable, meaning that the formal moduli space lifting $X_0$ over $W(k)$ may be large. Such an example is already known to exist for formal Abelian schemes (see \cite[Remarks 8.5.24 (b)]{I05}).
\end{enumerate}
\end{remark}

\section{Smooth projective varieties with quasi-canonical liftings}

Let us start with the next lemma. Although we are mainly interested in the case that $T_{X}$ is trivial, we decided to include the case of numerically flat vector bundles for wide applicability (see \cite{EY23} and \cite{KW23} for these topics). Recall that a vector bundle $E$ on a smooth variety $X$ is \emph{numerically flat} if $E$ and $E^\vee:={\mathcal{H}om}(E,\mathcal{O}_X)$ are numerically effective vector bundles.

\begin{lemma}
\label{numericallyflat}
Let $X$ be a smooth projective variety over an algebraically closed field $k$. Then the following assertions hold.
\begin{enumerate}
\item
Assume that the tangent bundle $T_{X}$ is numerically flat (resp. trivial). Assume that $f:Y \to X$ is \'etale. Then the tangent bundle $T_{Y}$ is numerically flat (resp. trivial).

\item
Assume that $X$ is ordinary with $\omega_X \cong \mathcal{O}_X$ and $Y \to X$ is a surjective finite \'etale morphism. Then $Y$ is ordinary.

\end{enumerate}
\end{lemma}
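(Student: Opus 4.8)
The plan is to prove the two assertions separately, in both cases using that an \'etale morphism has vanishing relative cotangent complex, and for (2) additionally the Frobenius-splitting characterization of ordinarity from \Cref{ordinarysplit}.

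For (1): since $f$ is \'etale, $\mathbb{L}_{Y/X} \simeq_{q.i.} 0$, so the canonical map $f^{*}\Omega^{1}_{X} \to \Omega^{1}_{Y}$ is an isomorphism, and dualizing (both sheaves are locally free, as $X$ and $Y$ are smooth) yields $T_{Y} \cong f^{*}T_{X}$. If $T_{X} \cong \mathcal{O}_{X}^{\oplus \dim X}$ is trivial, then $T_{Y} \cong \mathcal{O}_{Y}^{\oplus \dim Y}$ is trivial. If $T_{X}$ is numerically flat, then $T_{X}$ and $T_{X}^{\vee}$ are nef; pullback of vector bundles preserves nefness (pass to the projectivization, where $\mathcal{O}(1)$ pulls back to $\mathcal{O}(1)$) and commutes with duals, so $T_{Y} = f^{*}T_{X}$ and $T_{Y}^{\vee} = f^{*}(T_{X}^{\vee})$ are nef, i.e. $T_{Y}$ is numerically flat. (For the numerically flat case I take $Y$ projective, which is automatic whenever $f$ is finite \'etale, so that numerical flatness on $Y$ has its usual meaning.)

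For (2): I would argue in three steps. First, $f$ \'etale gives $\omega_{Y} \cong f^{*}\omega_{X}$, so $\omega_{X} \cong \mathcal{O}_{X}$ forces $\omega_{Y} \cong \mathcal{O}_{Y}$; hence $\dim_{k}\Gamma(Y,\omega_{Y}) = 1$ and $\mathrm{id}_{Y}$ is a surjective finite \'etale cover of $Y$ with trivial canonical sheaf, so \Cref{ordinarysplit} applies both to $X$ (with the cover $\mathrm{id}_{X}$) and to $Y$ (with $\mathrm{id}_{Y}$). Second, since $X$ is ordinary, \Cref{ordinarysplit} shows $X$ is globally Frobenius-split, and I claim this property passes to $Y$: \'etaleness of $f$ makes the relative Frobenius $F_{Y/X}$ an isomorphism, equivalently the square formed by the absolute Frobenii of $X$, $Y$ and the map $f$ is cartesian, so by base change along this square the structure map $\mathcal{O}_{Y} \to (F_{Y})_{*}\mathcal{O}_{Y}$ is the $f$-pullback of $\mathcal{O}_{X} \to (F_{X})_{*}\mathcal{O}_{X}$; pulling back an $\mathcal{O}_{X}$-linear splitting of the latter then produces an $\mathcal{O}_{Y}$-linear splitting of the former. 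Third, $Y$ is now globally Frobenius-split with $\omega_{Y} \cong \mathcal{O}_{Y}$, so \Cref{ordinarysplit} applied to $Y$ gives that $Y$ is ordinary. (If $Y$ is disconnected, each connected component is smooth over $k$, hence integral, and maps finite \'etale and surjectively onto $X$; apply the above to each component.)

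The step I expect to be the main subtlety is the second step of (2). Because part (2) makes no assumption on the degree $[K(Y):K(X)]$, one cannot split $\mathcal{O}_{X} \to f_{*}\mathcal{O}_{Y}$ by the normalized trace and read off the vanishing $H^{i}(Y, B\Omega^{j}_{Y}) = H^{i}(X, B\Omega^{j}_{X} \otimes f_{*}\mathcal{O}_{Y}) = 0$ directly from the ordinarity of $X$, which only controls the cohomology of $B\Omega^{j}_{X}$ with trivial coefficients. Passing through Frobenius-splitting — a condition local on $X$ and stable under \'etale pullback with no constraint on the degree — is precisely what makes the degree-free statement go through; the rest is bookkeeping with $\mathbb{L}_{Y/X} \simeq_{q.i.} 0$ and \Cref{ordinarysplit}.
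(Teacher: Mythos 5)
Your proof of part (1) is essentially identical to the paper's: both derive $T_Y \cong f^*T_X$ from étaleness and then observe that pullback preserves triviality and numerical flatness. Nothing to flag there.

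For part (2), your argument is correct but takes a genuinely different route than the paper's. Both proofs invoke \Cref{ordinarysplit} to convert ordinarity into another property that is more visibly local, and then show that property ascends along étale maps. The paper chooses the ``$(X,F)$ lifts over $W_2(k)$'' characterization and cites \cite[Lemma 3.3.5]{AWZ21} for the ascent; you choose the ``globally Frobenius-split'' characterization and prove the ascent directly, by noting that for $f$ étale the relative Frobenius $F_{Y/X}$ is an isomorphism, so the square built from $F_X$, $F_Y$ and $f$ is cartesian, and flat base change turns an $\mathcal{O}_X$-linear retraction of $\mathcal{O}_X \to (F_X)_*\mathcal{O}_X$ into an $\mathcal{O}_Y$-linear retraction of $\mathcal{O}_Y \to (F_Y)_*\mathcal{O}_Y$. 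Your route is more self-contained — it replaces the external citation with a one-line flat base change argument — and it makes transparent exactly why no degree condition is needed, which you highlight correctly as the crux. The paper's route has the advantage of plugging into the same $W_2$-lifting machinery that the rest of \cite{AWZ21} is built on. Your handling of the disconnected case and of the hypotheses of \Cref{ordinarysplit} (taking $\mathrm{id}_Y$ as the cover, and using $\Gamma(Y_0,\mathcal{O}_{Y_0}) = k$ for each connected component) is also careful and correct.
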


\begin{proof}
$(1)$: Assume that $T_{X}$ is numerically flat. It is a general fact that the pullback $f^*T_{X}$ is numerically flat on $Y$. Since $f$ is \'etale, we have a short exact sequence $0 \to f^*\Omega^1_X \to \Omega^1_Y \to \Omega^1_{Y/X} \to 0$ and $\Omega^1_{Y/X}=0$ in view of \cite[Proposition 18.18 and Proposition 18.29]{GW2}. So we have an isomorphism $f^*\Omega^1_X \cong \Omega^1_Y$. Since $X$ is a smooth variety, $\Omega^1_X$ is a locally free $\mathcal{O}_X$-module and we get
$$
f^*T_{X} \cong f^*{\mathcal{H}om}(\Omega_X^1,\mathcal{O}_X) \cong {\mathcal{H}om}(f^*\Omega_X^1,\mathcal{O}_Y) \cong {\mathcal{H}om}(\Omega_Y^1, \mathcal{O}_Y) = T_Y
$$
(see, for example, \cite[Exercise 7.10]{GW1}). Hence $T_{Y}$ is numerically flat.

$(2)$: By \Cref{ordinarysplit}, \((X, F)\) has a lifting over \(W_2(k)\). This shows that \((Y, F)\) also has a lifting over \(W_2(k)\) by \cite[Lemma 3.3.5]{AWZ21}. (Note that the terminology \emph{Frobenius liftings} of a \(k\)-scheme \(X\) in \cite{AWZ21} means a flat \(W_2(k)\)-scheme together with a Frobenius lift whose closed fiber is \(X\)). Then \Cref{ordinarysplit} implies that \(Y\) is ordinary.
\end{proof}

\subsection{The proof of \Cref{mtheorem1}}
We will need \Cref{liftcanonical}, \Cref{algquotient}, and the following lemma for the proof of the main result. The following splitting lemma is based on the proof of \cite[Lemma 3.1]{Z17}.

\begin{lemma}
\label{SplittingLemma}
Let \(g \colon Z \to X\) be a surjective finite \'etale morphism of smooth projective varieties over a field \(k\) of characteristic \(p > 0\). If the degree \([K(Z): K(X)]\) of \(g\) is prime to \(p\), then the canonical map \(B\Omega_X^1 \to g_*B\Omega_Z^1\) splits in the category of \(\mathcal{O}_X\)-modules. In particular, we have a split injection
\begin{equation} \label{splitCoh1}
H^k(X, T_X \otimes B\Omega_X^1) \hookrightarrow H^k(Z, T_Z \otimes B\Omega_Z^1)
\end{equation}
for each \(k \in \mathbb{Z}\). Similarly, we obtain the following split injection for a nc pair \((X, D)\) and \((Z, D_Z \coloneqq g^*D)\):
\begin{equation} \label{splitCoh2}
H^k(X, T_X(-\log D) \otimes B\Omega_X^1) \hookrightarrow H^k(Z, T_Z(-\log D_Z) \otimes B\Omega_Z^1)
\end{equation}
for each \(k \in \mathbb{Z}\).
\end{lemma}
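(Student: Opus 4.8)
The plan is to prove the $\mathcal{O}_X$-linear splitting of $B\Omega^1_X\to g_*B\Omega^1_Z$ first, and then to transfer it to the two cohomology statements by a tensor-and-pushforward argument.

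For the sheaf-level splitting I would use that, by the fundamental exact sequence $(\ref{CartierExact2})$ (whose first map is the absolute Frobenius), $B\Omega^1_X$ is the cokernel of $F_X^\sharp\colon\mathcal{O}_X\to F_{X*}\mathcal{O}_X$, and likewise $B\Omega^1_Z$ on $Z$. Since $g$ is finite, $g_*$ is exact, and from $g\circ F_Z=F_X\circ g$ one gets $g_*F_{Z*}\mathcal{O}_Z=F_{X*}(g_*\mathcal{O}_Z)$, so that $g_*B\Omega^1_Z$ is the cokernel of a Frobenius map $\beta\colon g_*\mathcal{O}_Z\to F_{X*}(g_*\mathcal{O}_Z)$. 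The ring homomorphism $g^\sharp\colon\mathcal{O}_X\to g_*\mathcal{O}_Z$ commutes with these Frobenius maps (as $g^\sharp(a^p)=(g^\sharp a)^p$) and induces on cokernels the canonical map $B\Omega^1_X\to g_*B\Omega^1_Z$. On the other hand $X$ is integral and normal and $g$ is finite surjective of constant degree $d=[K(Z):K(X)]$, which is a unit in $\mathcal{O}_X$ because it is prime to $p$; so \Cref{TraceTimesd} (or \Cref{tracemap}) yields a trace $\mathrm{Tr}_g\colon g_*\mathcal{O}_Z\to\mathcal{O}_X$ with $\mathrm{Tr}_g\circ g^\sharp=d\cdot\mathrm{id}$, and $r\coloneqq\tfrac1d\mathrm{Tr}_g$ is an $\mathcal{O}_X$-linear retraction of $g^\sharp$. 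The step I would actually have to check is that $r$ too commutes with the Frobenius maps, i.e.\ $\mathrm{Tr}_g(s^p)=\mathrm{Tr}_g(s)^p$ for local sections $s$; I would verify this \'etale-locally on $X$, where $g$ becomes $\bigsqcup_{i=1}^d X\to X$, $g_*\mathcal{O}_Z=\mathcal{O}_X^{\oplus d}$ and $\mathrm{Tr}_g$ is the sum of components (as in the proof of \Cref{TraceTimesd}), so that $\mathrm{Tr}_g(s^p)=\sum_i s_i^p=(\sum_i s_i)^p=\mathrm{Tr}_g(s)^p$ since $\mathcal{O}_X$ has characteristic $p$. Then $r$ descends to a retraction $g_*B\Omega^1_Z\to B\Omega^1_X$ of the canonical map, which gives the splitting.

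For the cohomological consequences I would proceed as follows. Since $g$ is \'etale, $\Omega^1_{Z/X}=0$, hence $g^*\Omega^1_X\cong\Omega^1_Z$ and dually $g^*T_X\cong T_Z$ (cf.\ \Cref{numericallyflat}(1)). Tensoring the split injection $B\Omega^1_X\hookrightarrow g_*B\Omega^1_Z$ with the locally free sheaf $T_X$ keeps it split, and the projection formula for the finite morphism $g$ gives $T_X\otimes g_*B\Omega^1_Z\cong g_*(g^*T_X\otimes B\Omega^1_Z)\cong g_*(T_Z\otimes B\Omega^1_Z)$; hence $T_X\otimes B\Omega^1_X\hookrightarrow g_*(T_Z\otimes B\Omega^1_Z)$ is a split $\mathcal{O}_X$-linear injection. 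Applying $H^k(X,-)$, which preserves split injections, and using $H^k(X,g_*\mathcal{F})\cong H^k(Z,\mathcal{F})$ (as $g$ is affine, so $R^{>0}g_*=0$), I obtain the split injection $(\ref{splitCoh1})$. For the logarithmic version, \Cref{pullbacknc} shows $(Z,D_Z)$ is an nc pair, and since $g$ is \'etale with $D_Z=g^*D$ the local description of the logarithmic tangent sheaf (local basis $x_1\partial_1,\dots,x_r\partial_r,\partial_{r+1},\dots,\partial_n$ adapted to $D=\{x_1\cdots x_r=0\}$) pulls back to the corresponding basis on $Z$, so $g^*\bigl(T_X(-\log D)\bigr)\cong T_Z(-\log D_Z)$; running the same argument with $T_X(-\log D),\,T_Z(-\log D_Z)$ in place of $T_X,\,T_Z$ gives $(\ref{splitCoh2})$.

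The only non-formal ingredient is the Frobenius-compatibility of the normalized trace, $\mathrm{Tr}_g(s^p)=\mathrm{Tr}_g(s)^p$, used to make the trace retraction descend to $B\Omega^1$; everything else is the projection formula, exactness of $g_*$, and vanishing of higher direct images of a finite morphism.
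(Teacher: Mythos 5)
Your proposal is correct and follows the paper's overall blueprint: pushforward the exact sequences of the form $(\ref{CartierExact2})$ along $g$, observe that $g_*B\Omega^1_Z$ is the cokernel of $g_*\mathcal{O}_Z \to F_{X*}(g_*\mathcal{O}_Z)$, use the normalized trace $r = \tfrac1d\mathrm{Tr}_g$ to retract $g^\sharp$, descend the retraction to the cokernels, and then obtain the cohomological statements from the projection formula, the isomorphism $g^*T_X\cong T_Z$ (resp.\ $g^*T_X(-\log D)\cong T_Z(-\log D_Z)$), and vanishing of higher direct images along the affine morphism $g$.

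The genuine difference is how you verify the Frobenius-compatibility of the trace. The paper identifies $F_*$ with $(-)^{1/p}$, introduces the trace of $K(Z)^{1/p}/K(X)^{1/p}$ as the pushed-forward retraction, and computes with minimal polynomials (the coefficients of the minimal polynomial of $x^p$ are the $p$-th powers of those of $x$). You instead reduce \'etale-locally on $X$ to the split case $\bigsqcup_{i=1}^d X \to X$, where $\mathrm{Tr}_g$ is the sum map, and use $\sum_i s_i^p = (\sum_i s_i)^p$ in characteristic $p$; this is legitimate because the trace commutes with flat pullback (as the paper itself records in the proof of \Cref{TraceTimesd}) and $\mathcal{O}_X\to h_*\mathcal{O}_U$ is injective for $h$ an \'etale surjection of varieties. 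Your route is shorter and more conceptual, and it sidesteps the minimal-polynomial bookkeeping.

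One small point you should make explicit: what you must verify for the \emph{normalized} trace is $r(s^p)=r(s)^p$, whereas you verify $\mathrm{Tr}_g(s^p)=\mathrm{Tr}_g(s)^p$. These are indeed equivalent, but only because $d$ is prime to $p$, so $\overline{d}\in\mathbb{F}_p^\times\subseteq k$ and $\overline{d}^{\,p}=\overline{d}$ by Fermat; equivalently $(\tfrac1d)^p=\tfrac1d$ in $k$. With that one-line remark added, the argument is complete.
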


\begin{proof}
As in $(\ref{CartierExact2})$, we have short exact sequences:
$$
0 \to \mathcal{O}_Z \to F_*\mathcal{O}_Z \to B\Omega^1_Z \to 0~\mbox{and}~0 \to \mathcal{O}_X \to F_*\mathcal{O}_X \to B\Omega^1_X \to 0.
$$
Since $f$ is an affine morphism, we get the induced exact sequence $0 \to f_*\mathcal{O}_Z \to f_*F_*\mathcal{O}_Z \to f_*B\Omega^1_Z \to 0$ by \citeSta{0G9R}. Since the Frobenius morphism commutes with an arbitrary morphism, we get $f_*F_*\mathcal{O}_Z \cong F_*f_*\mathcal{O}_Z$ by \cite[(7.8.2)]{GW1}, which also gives $f_*B\Omega^1_Z \cong \coker\big(f_*\mathcal{O}_Z \to F_*f_*\mathcal{O}_Z\big)$. Thus, the map of sheaves $\mathcal{O}_X \to f_*\mathcal{O}_Z$ induces a commutative diagram:
\begin{equation}
\label{F-splitdiagram}
\vcenter{
\xymatrix@M=10pt{ 
0 \ar[r] &\mathcal{O}_X \ar[r]^{F^\sharp}\ar[d] &F_*\mathcal{O}_X \ar[r]\ar[d]& B\Omega^1_X \ar[d]\ar[r] &0&\\
0 \ar[r] &f_*\mathcal{O}_Z \ar[r]^{f_*(F^\sharp)} &F_*f_*\mathcal{O}_Z \ar[r]& f_*B\Omega^1_Z \ar[r] &0 \\
}}
\end{equation}
Since $f:Z \to X$ is a finite surjective morphism between smooth projective varieties, \Cref{tracemap} provides a trace map of $\mathcal{O}_X$-modules $\mathrm{Tr}_f :f_*\mathcal{O}_Z \to \mathcal{O}_X$ defined by $x \mapsto -ea_1$ where \(T^n + a_1T^{n-1} + \cdots + a_n \in K(X)[T]\) is the minimal polynomial of $x$ over $K(X)$ and \(en = [K(Z):K(X)]\).
Since \(d \coloneqq [K(Z):K(X)]\) is prime to \(p\), this \(\sigma \coloneqq -\frac{1}{d}\mathrm{Tr}_f\) splits the inclusion $\mathcal{O}_X \to f_*\mathcal{O}_Z$.
  
Then we can construct the normalized trace $F_*\sigma:F_*f_*\mathcal{O}_Z \to F_*\mathcal{O}_X$ as follows. Take an affine open subset \(U \subseteq X\) and let \(R \coloneqq \Gamma(U, \mathcal{O}_X)\) and let \(S \coloneqq \Gamma(f^{-1}(U), \mathcal{O}_Z)\). It is enough to construct \(F_*S \to F_*R\). Since $F$ is finite free of degree $p$ and $X$ and $Z$ are integral, we have $F_*\mathcal{O}_X(U) = F_*S \cong S^{1/p}$ and $F_*f_*\mathcal{O}_Z(U) = F_*R \cong R^{1/p}$, in which case $F^\sharp: \mathcal{O}_X(U) \to F_*\mathcal{O}_X(U)$ is identified with the natural inclusion $R \hookrightarrow R^{1/p}$ and $f_*(F^{\sharp})$ is also $S \hookrightarrow S^{1/p}$.
Then the trace map \(\mathrm{Tr}_{K(Z)^{1/p}/K(X)^{1/p}}\) induces a map \(F_*\sigma \colon S^{1/p} \to R^{1/p}\) as in \Cref{tracemap}. Explicitly, an element \(x \in S^{1/p}\) sends to $\frac{e}{d}a_1^{1/p} \in K(X)^{1/p}$, where \(a_1\) is an element of \(K(X)\) such that \(T^n + a_1T^{n-1} + \cdots + a_n\) is the minimal polynomial of \(x^p \in S \subseteq K(Z)\) over \(K(X)\) and \(ed = [K(Z):K(X)]\). This is because \(T^n + a_1^{1/p}T^{n-1} + \cdots + a_n^{1/p} \in K(X)^{1/p}[T]\) is the minimal polynomial of \(x \in S^{1/p}\) over \(K(X)^{1/p}\).

Take an element \(x \in S\) and its minimal polynomial \(T^n + a_1T^{n-1} + \cdots + a_n \in K(X)[T]\) with \(en = [K(Z):K(X)]\). Note that the minimal polynomial of \(x^p \in S\) over \(K(X)\) is \(T^n + a_1^pT^{n-1} + \cdots + a_n^p \in K(X)[T]\), namely, \(F_*\sigma(x) = \frac{e}{d}(a_1^p)^{1/p}\). Under the above identification \(F_*R \cong R^{1/p}\), we get
\begin{align*}
(F^\sharp \circ \sigma)(x)=F^\sharp\Big(\frac{e}{d}a_1\Big)=\frac{e}{d}a_1 \in R^{1/p}~\mbox{and}~  (F_*\sigma \circ f_*F^\sharp)(x)=F_*\sigma(x)=\frac{e}{d}(a_1^p)^{1/p} =\frac{e}{d}a_1 \in \mathcal{O}_X^{1/p}.
\end{align*}
The above observation shows that the following diagram commutes:
\[ 
\xymatrix@M=10pt{ 
\mathcal{O}_X \ar[r]^{F^\sharp}&F_*\mathcal{O}_X &\\
f_*\mathcal{O}_Z \ar[r]^{f_*(F^\sharp)} \ar[u]^{\sigma} &F_*f_*\mathcal{O}_Z. \ar[u]^{F_*\sigma}&\\
}
\]
By taking the cokernel of horizontal maps, this induces a map of $\mathcal{O}_X$-modules $f_*B\Omega^1_Z \to B\Omega^1_X$ splitting the inclusion $B\Omega^1_Z \to f_*B\Omega^1_Z$ in (\ref{F-splitdiagram}). So we conclude that all vertical maps of $\mathcal{O}_X$-modules appearing in $(\ref{F-splitdiagram})$ split.
Taking cohomology, we have an injection:
\begin{equation}
H^k(X,T_{X} \otimes B\Omega^1_X) \hookrightarrow H^k(X,T_{X} \otimes g_*B\Omega^1_Z).
\end{equation}
On the other hand, we can show that \(H^k(X, T_X \otimes g_*B\Omega_Z^1) \cong H^k(Z, T_Z \otimes B\Omega_Z^1)\): we use the fact that \(T_X \otimes g_*B\Omega_Z^1 \cong \mathcal{H}om(\Omega_X^1, g_*B\Omega_Z^1)\) by \cite[Proposition 7.7]{GW1} and $g^*\Omega^1_X \cong \Omega^1_Z$ by the \'etaleness of $g$ as follows:
\begin{align}
H^k(X,T_{X} \otimes g_*B\Omega^1_Z) & \cong H^k(X,\mathcal{H}om(\Omega^1_X, g_*B\Omega^1_Z)) \cong H^k(X,g_*\mathcal{H}om(g^*\Omega^1_X, B\Omega^1_Z)) \nonumber \\
& \cong H^k(Z,\mathcal{H}om(\Omega^1_Z, B\Omega^1_Z)) \cong H^k(Z,T_{Z} \otimes B\Omega^1_Z), 
\label{TangentBundleIsom}
\end{align}
where the second isomorphism follows from \cite[Proposition 7.11]{GW1} by checking locally. The logarithmic variant (\ref{splitCoh2}) is proved in the same way.
\end{proof}

The prime to \(p\) degree condition is stable under composition and base change. The composition is clear, and the base change is given by the following lemma.

\begin{lemma}
\label{StableDegree}
Let \(f \colon X \to Y\) and \(g \colon Z \to Y\) be finite morphisms of integral schemes. Assume that the fiber product \(X \times_Y Z\) is also an integral scheme. If the degree \([K(Z):K(Y)]\) of \(g\) is prime to \(p\), then so is the base change \(X \times_Y Z \to X\).
\end{lemma}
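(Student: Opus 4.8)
The plan is to reduce the assertion to the case where $Y$ is the spectrum of a field, where it becomes the observation that a tensor product of fields which happens to be an integral domain is automatically a field.

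First I would record that $g$ is surjective: it is finite, hence closed, and it is dominant (otherwise the degree $[K(Z):K(Y)]$ would not be defined), and a closed dominant morphism is surjective. Consequently the base change $X\times_Y Z\to X$ is surjective, so the generic point $\eta$ of the integral scheme $X\times_Y Z$ lies over the generic point $\eta_X$ of $X$; in particular $X\times_Y Z\to X$ is dominant and its degree $[K(X\times_Y Z):K(X)]$ is a well-defined finite number, computable on the generic fibre. Next I would reduce to the case where $f$ is dominant: the generic fibre of $X\times_Y Z\to X$ is $Z\times_Y\Spec K(X)$, and since $\Spec K(X)\to Y$ factors through the residue field of the point $y_0\coloneqq f(\eta_X)$, this fibre is already $Z_{y_0}\times_{\kappa(y_0)}\Spec K(X)$, where $Z_{y_0}$ is the fibre of $g$ over $y_0$; hence nothing is lost by replacing $Y$ by the integral closed subscheme $\overline{\{y_0\}}$. (In every application of the lemma $f$ is itself finite surjective, so this step is free.)

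Now, with $f$ and $g$ both finite and dominant between integral schemes, I would base change the whole situation along $\Spec K(Y)\hookrightarrow Y$. The generic fibre of a finite dominant morphism of integral schemes is the spectrum of its function field — on an affine chart the pertinent ring is a localization of an integral, module-finite algebra, hence an integral domain finite over a field, hence a field. So this base change carries $X$ to $\Spec K(X)$, carries $Z$ to $\Spec K(Z)$, and carries $X\times_Y Z$ to $\Spec\bigl(K(X)\otimes_{K(Y)}K(Z)\bigr)$. This last scheme contains $\eta$ (which lies over $\eta_X$, hence over $\eta_Y$), and its coordinate ring $K(X)\otimes_{K(Y)}K(Z)$, being a localization of the domain $\mathcal{O}(X\times_Y Z)$, is an integral domain; as it is moreover finite over $K(X)$, it is a field, and therefore coincides with $K(X\times_Y Z)$. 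Hence
\[
[K(X\times_Y Z):K(X)]=\dim_{K(X)}\bigl(K(X)\otimes_{K(Y)}K(Z)\bigr)=[K(Z):K(Y)],
\]
which is prime to $p$ by hypothesis.

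The only steps that require any real care are the reduction to the dominant case and the identification of the generic fibre of a finite dominant morphism of integral schemes with the spectrum of its function field; everything after that is formal. If one prefers not to invoke that reduction, one can argue in place: integrality of $X\times_Y Z$ forces the finite $\kappa(y_0)$-algebra $\mathcal{O}(Z_{y_0})$ appearing above to be reduced (a nilpotent would survive the field base change) and indecomposable (a product decomposition would too), hence a single finite field extension $\ell$ of $\kappa(y_0)$, so that $[K(X\times_Y Z):K(X)]=[\ell:\kappa(y_0)]=\dim_{\kappa(y_0)}\mathcal{O}(Z_{y_0})$; since the fibre $Z_{y_0}$ of the finite morphism $g$ is then the spectrum of a field, $g$ is flat over $y_0$, and this dimension again equals the generic degree $[K(Z):K(Y)]$.
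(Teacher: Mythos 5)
Your proof is correct on its main line and uses essentially the same mechanism as the paper's: both reduce the computation to the tensor product $K(X)\otimes_{K(Y)}K(Z)$ sitting over the generic point of $X\times_Y Z$. The paper merely observes that $K(X\times_Y Z)$ embeds into this $[K(Z):K(Y)]$-dimensional $K(X)$-vector space and deduces that the degree divides $[K(Z):K(Y)]$. You push this one step further: since $K(X)\otimes_{K(Y)}K(Z)$ is a localization of the (affine pieces of the) integral scheme $X\times_Y Z$ and is finite over the field $K(X)$, it is itself a field and hence equals $K(X\times_Y Z)$ on the nose, giving equality of degrees. That is a genuinely sharper statement and, incidentally, plugs a slight terseness in the paper's "is a divisor of" step, which needs the small additional remark that $K(X)\otimes_{K(Y)}K(Z)$ is a finite-dimensional vector space over the subfield $K(X\times_Y Z)$.

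One caution on the trimmings. Both the paper and your proof implicitly need $f$ (and $g$) to be dominant — otherwise $\Spec K(X)\to Y$ does not factor through $\Spec K(Y)$, which is what makes $\Spec\bigl(K(X)\otimes_{K(Y)}K(Z)\bigr)$ the generic fibre. You try to dispense with this by two devices, and both have gaps. The reduction "replace $Y$ by $\overline{\{y_0\}}$" is not free: you would also have to replace $Z$ by $Z\times_Y\overline{\{y_0\}}$, and that scheme need not be integral, nor need the degree of its function-field extension over $\kappa(y_0)$ agree with $[K(Z):K(Y)]$ or remain prime to $p$. The in-place alternative hinges on "fibre of a finite morphism is the spectrum of a field $\Rightarrow$ flat at that point", which is not a standard fact and I do not see how to justify it in general (it is fine when the base is regular, by miracle flatness, or when the residue extension is separable, by factoring an unramified map through an \'etale cover, but neither is available here). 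Since, as you yourself note, $f$ is a finite \'etale surjection in every use of the lemma, the dominant case is the only one that matters, and there your argument is clean; but if you want the lemma in the generality stated, it is cleaner to simply add dominance of $f$ and $g$ to the hypotheses (as the paper does tacitly) rather than attempt the reduction.
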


\begin{proof}
Since \(f\) and \(g\) are morphisms of integral schemes, they induce \(\Spec(K(X)) \to \Spec(K(Y)) \leftarrow \Spec(K(Z))\) compatible with \(f\) and \(g\). This makes a unique morphism \(\Spec(K(X) \otimes_{K(Y)} K(Z)) \to X \times_Y Z\) and thus, we can get \(K(X) \hookrightarrow K(X \times_Y Z) \hookrightarrow K(X) \otimes_{K(Y)} K(Z)\). The last term is isomorphic to \(K(X)^{\oplus [K(Z):K(Y)]}\) as a \(K(X)\)-module, so the degree of the extension \(K(X) \hookrightarrow K(X \times_Y Z)\) is a divisor of \([K(Z):K(Y)]\) which is also prime to \(p\).
\end{proof}

Lemma \ref{StableDegree} enables us to descend the vanishing from \(H^k(Z, T_Z \otimes B\Omega_Z^1)\) to \(H^k(X, T_X \otimes B\Omega_X^1)\). We are now ready to prove \Cref{mtheorem1}.

\begin{proof}[Proof of \Cref{mtheorem1}]
(2) follows from (1) and \Cref{LineBundleLift}. So let us prove (1). Our first task is to construct $(\mathcal{X},\widetilde{F}_X)$ and establish its uniqueness. Let $g:Z \to X$ be the map as in $(\natural)$. We have the vanishing $H^k(Z,T_{Z}(-\log D_Z) \otimes B\Omega^1_Z) \cong 0$ for $k=0,1$ by the condition $(\natural)$. Then by the logarithmic version \((\ref{splitCoh2})\) of $(\ref{splitCoh1})$,
\begin{equation}
\label{ordinaryvanishing}
H^k(X,T_{X}(-\log D) \otimes B\Omega^1_X) \cong 0~\mbox{for}~k=0,1.
\end{equation}

By applying \cite[Variant 3.3.2]{AWZ21} (see also \cite[Appendix: Canonical liftings, Proposition 1]{MS87}), we get a unique $p$-adic formal nc scheme with Frobenius lift
$$
(X,D,F_X) \to (X_2,D_2,F_{X,2}) \to \cdots \to (X_n,D_n,F_{X,n}) \to \cdots
$$
such that $X_n$ is a flat $S_n$-scheme, $X_n \cong X_{n+1} \times_{S_{n+1}} S_n$ $D_{n+1}|_{X_n}=D_n$ and $F_{X,n+1}|_{X_n}=F_{X,n}$. Since $Z \to X$ is a finite \'etale surjection, \Cref{liftcanonical} may be supplied to yield the following commutative diagram of $p$-adic formal schemes:
\[
\xymatrix@M=10pt{
(Z,D_Z,F_Z)\ar[r] \ar[d]^f & (Z_2,D_{Z_2},F_{Z,2}) \ar[r] \ar[d]^{f_2}&\ar@{.}[r] & \ar[r] & (Z_n,D_{Z_n},F_{Z,n}) \ar[r]\ar[d]^{f_n} & \ar@{.}[r]&\\
(X,D,F_X)\ar[r] & (X_2,D_2,F_{X,2}) \ar[r] & \ar@{.}[r] & \ar[r] & (X_n,D_n,F_{X,n}) \ar[r] & \ar@{.}[r]& \\
}
\]
Now the $p$-adic formal scheme $\{(Z_n,D_{Z_n},F_{Z,n})\}_{n \ge 0}$ is unique among all $p$-adic formal schemes starting with $(Z,D_Z,F_Z)$ in view of \cite[Variant 3.3.2]{AWZ21}. Since we assume in $(\natural)$ that \(Z\) admits a quasi-canonical lifting \((\mathcal{Z},\mathcal{D}_Z, \widetilde{F}_Z)\) and \(H^0(Z, T_Z(-\log D_Z) \otimes B\Omega_Z^1) = 0\), the uniqueness of $\{(Z_n,D_{Z_n},F_{Z,n})\}_{n \ge 0}$ yields that $(\mathcal{Z},\mathcal{D}_Z,\widetilde{F}_Z)$ is an algebraization of $\varinjlim_n (Z_n,D_{Z_n},F_{Z,n})$ and this is the canonical lifting of $(Z,D_Z,F_Z)$ by \Cref{CanonicalUnique}. Now as in the proof of \Cref{algquotient}, one can use the norm of line bundles to conclude that there is a flat proper scheme $\mathcal{X}$ over $W(k)$ such that $\widehat{\mathcal{X}} \cong \varinjlim_{n} X_n$. As in the proof of \Cref{liftcanonical}, $\varinjlim_n F_{X,n}$ and \(\varinjlim_n f_n\) can be algebraized to give a unique Frobenius lift $\widetilde{F}_X$ on $\mathcal{X}$ and a finite \'etale surjective morphism \(\widetilde{f} \colon \mathcal{Z} \to \mathcal{X}\) compatible with Frobenius lifts. It remains to show the uniqueness of $(\mathcal{X},\mathcal{D},\widetilde{F}_X)$ up to isomorphism. However, this is shown readily by \((\ref{ordinaryvanishing})\) and \Cref{CanonicalUnique}.
\end{proof}

\subsection{The case of ordinary Abelian varieties}
In the case that $X$ is an \'etale quotient of an ordinary Abelian variety, we have the following result. This is stated in \cite[Remark 3.1.6]{AWZ21} without a proof. Also the existence of a flat lifting (not a quasi-canonical lifting) over \(W(k)\) of \(X\) is shown in \cite[Proposition 4.12]{BBKW22}.

\begin{corollary}
\label{AbVarLiftWitt}
Let $X$ be a smooth projective variety over an algebraically closed field $k$ of characteristic $p>0$. Assume $X$ is a finite \'etale quotient of an ordinary Abelian variety. Then $X$ has a quasi-canonical lifing $(\mathcal{X},\widetilde{F}_X)$ over the Witt vectors $W(k)$.

If the degree of the quotient map is prime to $p$, then \((\mathcal{X}, \widetilde{F}_X)\) is the canonical lifting of \((X, F_X)\) over \(W(k)\).
Moreover, we have the functoriality of the canonical liftings:
\begin{itemize}
\item
Let $\psi:X \to Y$ be a morphism of smooth projective varieties over $k$ such that $X$ (resp. $Y$) admits an ordinary Abelian variety as a finite \'etale covering whose degrees are prime to \(p\). Then there exists a $W(k)$-morphism $\widetilde{\psi}:\mathcal{X} \to \mathcal{Y}$ such that $\widetilde{\psi}$ is a lifting of $\psi$, $(\mathcal{X},\widetilde{F}_X)$ and $(\mathcal{Y},\widetilde{F}_Y)$ are the canonical liftings of $\mathcal{X}$ and $\mathcal{Y}$ respectively, and the following diagram commutes
  \[
  \xymatrix@M=10pt{ 
  \mathcal{X} \ar[r]^{\widetilde{F}_X} \ar[d]_{\widetilde{\psi}} &\mathcal{X} \ar[d]_{\widetilde{\psi}}\\
  \mathcal{Y} \ar[r]^{\widetilde{F}_Y} & \mathcal{Y} \\
  }
  \]
\end{itemize}

\end{corollary}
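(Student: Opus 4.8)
The plan is to build everything from the Serre--Tate canonical lifting of the covering abelian variety and then descend. Write $X=A/G$ with $A$ an ordinary abelian variety, $G$ a finite group acting freely, and $q\colon A\to X$ the quotient. By Serre--Tate theory (\cite{Ka81,Me72}) $A$ has a canonical lifting to an abelian scheme $\mathcal A$ over $W(k)$ equipped with the canonical Frobenius lift $\widetilde F_A$; ordinarity of $A$ is what makes $\widetilde F_A$ exist. Since $T_A$ is trivial and $A$ is ordinary, $H^0(A,T_A\otimes B\Omega^1_A)=H^1(A,T_A\otimes B\Omega^1_A)=0$, and each $g\in G$ is a self-morphism of $A$ commuting with the absolute Frobenius; so, by the same cotangent-complex deformation theory used in the proofs of \Cref{algquotient} and \Cref{liftcanonical} (and \cite[Appendix: Canonical liftings, Proposition 1]{MS87}, \cite[Variant 3.3.2]{AWZ21}), $g$ lifts to a \emph{unique} automorphism of the formal completion $\widehat{\mathcal A}$ compatible with the Frobenius lift, hence, $\mathcal A$ being projective, to a unique automorphism $\widetilde g$ of $\mathcal A$ over $W(k)$ with $\widetilde g\circ\widetilde F_A=\widetilde F_A\circ\widetilde g$. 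Uniqueness forces $g\mapsto\widetilde g$ to be a homomorphism, so $G$ acts on $\mathcal A$ compatibly with $\widetilde F_A$.

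Next I would check that the lifted action is still free: for $g\ne e$ the fixed locus of $\widetilde g$ is closed in $\mathcal A$, proper over $W(k)$, with empty special fibre, hence empty by \Cref{OpenNeighborhoodofClosedFiber}. Thus $\pi\colon\mathcal A\to\mathcal X:=\mathcal A/G$ is a finite étale $G$-torsor; its formation commutes with arbitrary base change, so $\mathcal X\times_{W(k)}k\cong A/G=X$, and $\mathcal X$ is smooth, flat and projective over $W(k)$ (descend an ample line bundle via a norm, as in the proof of \Cref{algquotient}). As $\widetilde F_A$ is $G$-equivariant, $\pi\circ\widetilde F_A$ is $G$-invariant and descends to $\widetilde F_X\colon\mathcal X\to\mathcal X$; a diagram chase with $q$ (using naturality of the absolute Frobenius and that $q$ is an epimorphism) shows $\widetilde F_X$ reduces to $F_X$, so $(\mathcal X,\widetilde F_X)$ is a quasi-canonical lifting of $X$. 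If moreover $|G|=[K(A):K(X)]$ is prime to $p$, then \Cref{SplittingLemma} gives a split injection $H^0(X,T_X\otimes B\Omega^1_X)\hookrightarrow H^0(A,T_A\otimes B\Omega^1_A)=0$, so \Cref{CanonicalUnique} promotes $(\mathcal X,\widetilde F_X)$ to the canonical lifting.

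For the functoriality assertion, take $\psi\colon X\to Y$ with $X=A/G$, $Y=B/H$, covers $q,p$, and $|G|,|H|$ prime to $p$; by the previous paragraph $\mathcal X,\mathcal Y$ are the canonical liftings. The idea is to pull the cover $p\colon B\to Y$ back to $A$: set $W:=A\times_{\psi q,\,Y,\,p}B$, finite étale over $A$ of degree $|H|$, and let $W_0$ be a connected component containing a point over $0\in A$. By the standard structure theory of finite étale covers of abelian varieties, $W_0$ is an ordinary abelian variety with $W_0\to A$ an isogeny of degree prime to $p$ and $W_0\to B$ a morphism of abelian varieties. Applying \Cref{liftcanonical} to $W_0\to A$ and the functoriality of canonical liftings to $W_0\to B$ (here $H^\bullet\bigl(W_0,(W_0\to B)^*T_B\otimes B\Omega^1_{W_0}\bigr)=H^\bullet(W_0,B\Omega^1_{W_0})^{\oplus\dim B}=0$ since $T_B$ is trivial and $W_0$ is ordinary), I obtain from the canonical lift $\widetilde{W_0}$ two Frobenius-compatible maps $g_0\colon\widetilde{W_0}\to\mathcal A\to\mathcal X$ (finite étale surjective) and $h_0\colon\widetilde{W_0}\to\mathcal B\to\mathcal Y$ lifting $W_0\to X$ and $\psi\circ(W_0\to X)$. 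Then I would descend $h_0$ along $g_0$: the two pullbacks of $h_0$ to $\widetilde{W_0}\times_{\mathcal X}\widetilde{W_0}$ agree on special fibres, are Frobenius-compatible, and factor through $\mathcal B\to\mathcal Y$; since $\widetilde{W_0}\times_{\mathcal X}\widetilde{W_0}$ is a disjoint union of canonical lifts of ordinary abelian varieties and the pulled-back $T_Y$ is trivial there, the same cohomological uniqueness forces the two pullbacks to coincide, and fppf descent of morphisms yields $\widetilde\psi\colon\mathcal X\to\mathcal Y$ with $\widetilde\psi\circ g_0=h_0$. Reducing mod $p$ and using faithful flatness of $g_0$ then shows $\widetilde\psi$ lifts $\psi$ and intertwines $\widetilde F_X$ with $\widetilde F_Y$.

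The main obstacle is the functoriality part: $\psi$ is an arbitrary morphism, not a morphism of abelian varieties, and $\psi^*T_Y$ need not be trivial on $X$ or even pull back to a trivial bundle on $A$, so one cannot simply lift $\psi$ by a cohomological argument on $X$. Passing to the abelian cover $W_0=A\times_YB$ is the device that repairs this, since on $W_0$ every pullback of a tangent bundle occurring in the argument becomes trivial and ordinarity is at hand --- exactly what is needed for the uniqueness statement underlying the descent. A secondary difficulty, already present in part (1), is keeping track of Frobenius-compatibility along the chains $\widetilde{W_0}\to\mathcal A\to\mathcal X$ and $\widetilde{W_0}\to\mathcal B\to\mathcal Y$, which throughout relies on the uniqueness of Frobenius-compatible lifts over ordinary bases.
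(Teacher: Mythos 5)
Your proposal is correct, and the first half is essentially the paper's argument in a slightly repackaged form, but the functoriality step takes a genuinely different route. For the existence of the quasi-canonical lifting, the paper also reduces to a $G$-Galois cover $A\to X$, invokes \cite[Proposition 4.12]{BBKW22} to obtain a finite \'etale surjection $\mathcal A\to\mathcal X$ lifting $A\to X$, forms $\mathcal R=\mathcal A\times_{\mathcal X}\mathcal A$, uses the cohomological uniqueness from \cite[Appendix]{MS87} to correct the projections so they commute with the Frobenius lifts, and then recovers $\mathcal X$ and $\widetilde F_X$ as the coequalizer. You instead lift the $G$-action directly (by the same uniqueness argument), check freeness of the lifted action via \Cref{OpenNeighborhoodofClosedFiber}, and form $\mathcal X:=\mathcal A/G$. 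Since $\mathcal R\cong G\times\mathcal A$ for a torsor, these are really two presentations of the same construction, though yours is a bit more self-contained (it does not need \cite{BBKW22} to produce the flat lift of $X$ as input). The promotion to a canonical lifting when the degree is prime to $p$ goes through \Cref{SplittingLemma} and \Cref{CanonicalUnique} in both treatments.

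Where you truly diverge is functoriality. The paper goes through \cite[Appendix: Canonical liftings, Proposition 3]{MS87}: it reduces to checking the single cohomological vanishing $H^0(X,\psi^*T_Y\otimes B\Omega^1_X)=0$, which it transfers via \Cref{SplittingLemma} to the ordinary abelian variety $A_1:=A_1'\times_X(X\times_Y A_2)$, where $\phi^*g^*\Omega^1_Y$ is trivial, and concludes by the vanishing in \cite[Appendix, Theorem 1]{MS87}. You instead build a Frobenius-compatible lift on the auxiliary abelian cover $\widetilde{W_0}$ (using the same cohomological criterion, but applied to the abelian morphism $W_0\to B$), then descend it along the finite \'etale surjection $g_0:\widetilde{W_0}\to\mathcal X$ by fppf descent, with the cocycle equality on $\widetilde{W_0}\times_{\mathcal X}\widetilde{W_0}$ enforced by uniqueness of Frobenius-compatible lifts componentwise. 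Both routes rest on the same uniqueness input from \cite{MS87}, but the paper applies it once, directly on $X$, via a vanishing transported by the trace splitting, whereas you apply it on a cover and pay for it with a descent step. The paper's version is leaner; yours has the advantage of never needing to control $H^0(X,\psi^*T_Y\otimes B\Omega^1_X)$ on $X$ itself, only on abelian covers where the relevant bundles are trivially trivial — a pattern that may generalize more easily. One place to tighten your write-up: when you invoke the equality $h_0\circ\mathrm{pr}_1=h_0\circ\mathrm{pr}_2$, make explicit that the unique Frobenius lift on $\widetilde{W_0}\times_{\mathcal X}\widetilde{W_0}$ is simultaneously compatible with both projections (this again follows from uniqueness on each connected abelian component), since the Frobenius-compatibility is what feeds the uniqueness that you use to identify the two pullbacks.
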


\begin{proof}
By assumption, we can take a finite \'etale surjection $A' \to X$ from an ordinary Abelian variety $A'$. Then $A' \to X$ extends to a Galois covering $A \to A' \to X$ (see, for example, \cite[Remark 4.11]{BBKW22}). Hence $A$ is also an ordinary Abelian variety by \cite[Theorem at page 168]{Mumford} and \Cref{numericallyflat} (2). So without losing generality, it is sufficient to consider the case when $f:A \to X$ is a $G$-Galois covering from an ordinary Abelian variety $A$. This means that $X$ is a universal quotient of $A$ by $G$.
By the existence of canonical lifting for ordinary Abelian varieties (see \cite{MS87} and \cite{Me72}) and by \cite[Proposition 4.12]{BBKW22}, we have a finite \'etale surjection $\mathcal{A} \to \mathcal{X}$ whose mod-$p$ reduction is identified with $A \to X$.

Let $\mathcal{R}:=\mathcal{A} \times_{\mathcal{X}} \mathcal{A}$. Denote by $\sigma_i: \mathcal{R} \to \mathcal{A}$ the $i$-th projection map with $i=1,2$, respectively. By \Cref{algspacepre}, the coequalizer of $\sigma_1,\sigma_2:\mathcal{R} \rightrightarrows \mathcal{A}$ is $\mathcal{X}$. On the other hand, it follows from the proof of \Cref{liftcanonical} (2) that $\mathcal{R}$ admits a Frobenius lift $\widetilde{F}_R$. However, it is not clear whether $\widetilde{F}_A \circ \sigma_i=\sigma_i \circ \widetilde{F}_R$ holds or not for $i=1,2$. In order to remedy this issue, we need to make an adjustment to $\sigma_i$ $(i=1,2)$ as follows.

Let $R:=\mathcal{R} {\pmod p}$. Since $\mathcal{R}$ is finite \'etale over $\mathcal{A}$ via $\sigma_1$ or $\sigma_2$, we see that $R$ is finite \'etale over $A$ via $\sigma_1 \pmod{p}$ or $\sigma_2 \pmod{p}$. By Lemma \ref{numericallyflat}, $R$ is a (possibly non-connected) smooth projective variety over $k$ which is ordinary and the tangent bundle $T_R$ is trivial. It follows from \cite[Appendix: Canonical liftings, Theorem 1 (2)]{MS87} that we can find unique morphisms $\sigma_i':\mathcal{R} \to \mathcal{A}$ $(i=1,2)$ such that $\sigma_i \equiv \sigma_i' {\pmod p}$ and $\widetilde{F}_A \circ \sigma_i'=\sigma_i' \circ \widetilde{F}_R$ $(i=1,2)$. However the uniqueness in \Cref{liftcanonical} (1) gives $\sigma_i=\sigma_i'$ ($i=1,2$). Hence we get the commutative diagram:
\begin{equation}
\label{coequalizer123}
\vcenter{
\xymatrix@M=10pt{ 
\mathcal{R} \ar[r]^{\widetilde{F}_R} \pdarrow{\sigma_{2}}{\sigma_{1}}&\mathcal{R} \pdarrow{\sigma_{2}}{\sigma_{1}}
\\ 
\mathcal{A} \ar[r]^{\widetilde{F}_A} & \mathcal{A}  \\
}}
\end{equation}
Namely, $(\mathcal{R},\widetilde{F}_R)$ is a canonical lifting of $(R,F_R)$ and by construction, $\sigma_1,\sigma_2: \mathcal{R} \rightrightarrows \mathcal{A}$ defines a finite \'etale equivalence relation. After taking the coequalizers of $(\ref{coequalizer123})$, we obtain a smooth projective scheme $\mathcal{X}$ over $W(k)$ and a morphism $\widetilde{F}_X : \mathcal{X} \to \mathcal{X}$. It follows from \Cref{Giraud} that the mod-$p$ reduction of $\widetilde{F}_X : \mathcal{X} \to \mathcal{X}$ is the Frobenius morphism $F_X:X \to X$. Thus, we have proved the existence of a quasi-canonical lifting $(\mathcal{X},\widetilde{F}_X)$. Assume that \(X\) has a finite \'etale covering \(A \to X\) from an ordinary Abelian variety \(A\) of degree prime to \(p\). Then \Cref{mtheorem1} asserts that \((\mathcal{X}, \widetilde{F}_X)\) is the canonical lifting of \((X, F_X)\) over \(W(k)\).

We next prove the functoriality. By assumptions, there are finite \'etale covers $A_1' \to X$ and $g:A_2 \to Y$ from ordinary Abelian varieties whose degrees are prime to \(p\). Let $X':=X \times_Y A_2$. Then $X' \to X$ is a finite \'etale morphism, because $A_2 \to Y$ is so. Set $A_1:=A_1' \times_X X'$. Since $A_1 \to A_1'$ is the base change of \(A_2 \to Y\) along \(A_1' \to X \to Y\), it is finite \'etale and we see that $A_1$ is an ordinary Abelian variety (\Cref{numericallyflat} (2)). Let $f$ denote the composite morphism $A_1 \to A_1' \to X$ and let $\phi$ denote the composite morphism $A_1 \to X'=X \times_Y A_2 \to A_2$.
Note that the prime to \(p\) property of degrees of finite morphisms between integral schemes is stable under composition and base change (\Cref{StableDegree}).
So we have a commutative diagram:
$$
\vcenter{
\xymatrix@M=10pt{ 
A_1 \ar[d]^f \ar[r]^\phi & A_2 \ar[d]^g\\
X \ar[r]^\psi & Y \\
}}
$$
where all maps in the vertical direction are surjective finite \'etale morphisms whose degrees are prime to \(p\).

By the above proof, \((X, F)\) and \((Y, F)\) have quasi-canonical liftings \((\mathcal{X}, \widetilde{F}_X)\) and \((\mathcal{Y}, \widetilde{F}_Y)\), respectively. For each mod \(p^n\)-reduction \((X_n, F_{X,n})\) and \((Y_n, F_{Y,n})\), we have to take a morphism \(f_n \colon X_n \to Y_n\) of \(S_n\)-schemes which is compatible with \(f_{n-1}\), \(F_{X,n}\), and \(F_{Y,n}\) (if it exists, it can be extended to \(\mathcal{X} \to \mathcal{Y}\) uniquely by \citeSta{0A42}). Now according to \cite[Appendix: Canonical liftings, Proposition 3]{MS87}, it suffices to check the vanishing:
$$
H^0(X,\psi^*T_Y \otimes B\Omega^1_X) \cong 0.
$$
Since the degree of \(f \colon A_1 \to X\) is prime to \(p\), we can apply \Cref{SplittingLemma} and we can get an injection
\begin{equation*}
H^0(X,\psi^*T_Y \otimes B\Omega^1_X) \hookrightarrow H^0(X, \psi^*T_Y \otimes f_*B\Omega_{A_1}^1).
\end{equation*}
It suffices to show that the latter group is zero. As in \Cref{SplittingLemma} (\ref{TangentBundleIsom}), we compute
$$
H^0(X,\psi^*T_{Y/k} \otimes f_*B\Omega^1_{A_1}) \cong H^0(X, \mathcal{H}om(\psi^*\Omega^1_Y,  f_*B\Omega^1_{A_1})) \cong H^0(X, f_*\mathcal{H}om(f^*\psi^*\Omega^1_Y,  B\Omega^1_{A_1}))
$$
$$
\cong H^0(X, f_*\mathcal{H}om(\phi^*g^*\Omega^1_Y,  B\Omega^1_{A_1})) \cong H^0(A_1, \mathcal{H}om(\phi^*g^*\Omega^1_Y,  B\Omega^1_{A_1})) \cong 0,
$$
where the first isomorphism follows from \cite[Exercise 7.20]{GW1} and the vanishing in the last step follows from \cite[Appendix: Canonical liftings, Theorem 1]{MS87} applied to the ordinary Abelian variety $A_1$, together with the following fact: Since $g$ is \'etale and the tangent bundle of $A_2$ is trivial, the pullback $\Omega^1_{A_2} \cong g^*\Omega^1_Y$ is trivial, thus $\phi^*g^*\Omega^1_Y$ is also trivial.
%
\end{proof}

\begin{remark}
\begin{enumerate}
\item
If $X$ is an $\mathbb{F}_p$-scheme and $L$ is a line bundle on $X$, then it is true that $F^*_X(L) \cong L^p$. This can be checked by looking at the Frobenius action on $\mathcal{O}_X^\times$ combined with anj isomorphism $\Pic(X) \cong H^1(X,\mathcal{O}_X^\times)$. However, it is not always true that $\widetilde{F}_X^*(\mathcal{L}) \cong \mathcal{L}^p$ for $\widetilde{F}_X:\mathcal{X} \to \mathcal{X}$ as constructed in \Cref{mtheorem1}.

\item
There is a non-logarithmic version of \Cref{mtheorem1}, in which case $Z$ is taken to be an ordinary Abelian variety. However, the logarithmic case has a wide applicability. For example, one could try to consider the case that $Z$ is a toric fibration. See \cite{AWZ21}, \cite{AWZ23} and \cite{AZ21} for extensive studies of Frobenius liftability question of toric fibrations over $W_2(k)$.

\item 
Let $f:X \to Y$ be a surjective finite \'etale morphism from a smooth projective variety $X$ that is liftable over the Witt vectors $W(k)$. Then is it true that $Y$ is also liftable over $W(k)$? This question has a negative answer. Serre has constructed $X \to Y$, where $X$ is a smooth complete intersection such that the degree of $f$ is divisible by $p$ and $Y$ is not liftable even over $W_2(k)$ (Serre-Godeaux varieties). Such an example also appears in \cite[Remark 3.1.7]{AWZ21}.
By using the flatness criterion using Hilbert polynomial, it can be proved that any global complete intersection projective variety has a flat lifting over the Witt vectors. The main point of the construction is that there is an automorphism of $X$ that does not lift over $W_2(k)$. The details of this construction can be found in \cite[Corollary 8.6.7]{I05}.

\item 
Let $\widetilde{F}_X:\mathcal{X} \to \mathcal{X}$ be as in \Cref{AbVarLiftWitt}. Then it restricts to the morphism $F:\overline{X} \to \overline{X}$, where $\overline{X}$ is the generic fiber of $\mathcal{X} \to \Spec(W(k))$. Since $\mathcal{X}$ is a finite \'etale quotient of an Abelian scheme over $W(k)$, $\overline{X}$ is also an \'etale quotient of an Abelian variety. In particular, the Kodaira dimension of $\overline{X}$ is $0$. Moreover, $\widetilde{F}_X$ has degree $>1$. By \cite[Proposition 2]{Be01}, $\widetilde{F}_X$ must be \'etale.
\end{enumerate}
\end{remark}

In the article \cite{MS87}, the authors showed that any smooth projective variety $X$ that is globally Frobenius-split with trivial cotangent bundle is an \'etale quotient from an ordinary Abelian variety of $p$-power degree. However, an answer to the following question seems to be unknown.

\begin{question}
Let $X$ be a finite \'etale quotient of an ordinary Abelian variety of $p$-power degree. Then is it true that $\omega_X$ is trivial?
\end{question}

What kind of projective variety does occur as a finite \'etale quotient of an ordinary Abelian variety? Recently, Ejiri and Yoshikawa in \cite{EY23} (see also \cite{KW23}) proved that a globally Frobenius-split variety with numerically trivial tangent bundle $T_X$, meaning that both $T_X$ and $T_X^\vee$ are nef vector bundles, arises as a finite \'etale quotient of an ordinary Abelian variety. Another interesting question is the following.

\begin{question}
Classify all finite \'etale quotients of a smooth projective variety.
\end{question}

There is a topological obstruction for the degree of surjective finite \'etale morphisms. Let $f:X \to Y$ be a surjective finite \'etale mortphism and let $d$ be the degree of $f$. Then we have
$\chi(X,\mathcal{O}_X)=d \cdot \chi(Y,\mathcal{O}_Y)$. In particular, if $\chi(X,\mathcal{O}_X)=1$, then there is no non-trivial finite \'etale quotient of $X$.

\begin{example}
It is possible to construct a variety $X$ such that $X$ is an \'etale quotient of some ordinary Abelian variety, $T_X$ is not trivial, but numerically flat. We learned this example from S. Yoshikawa through a private communication. Let $E$ be an ordinary elliptic curve over an algebraically closed field of characteristic $3$, and let $f:E \times E \to E \times E$ be an automorphism defined by the matrix
\[
\left(
\begin{array}{rrr}
0 & -1 \\
1 & -1 \\
\end{array}
\right)
\]
Then this has order 3. Choose a 3-torsion point $a \in E$ and consider the translation $T_a:E \to E$. Let $X$ be the quotient of $E \times E \times E$ by the action $T_a \times f$. Then it is shown that $T_X$ is not trivial. It is numerically flat as $X$ is an \'etale quotient of $E \times E \times E$. In particular, $X$ is globally Frobenius-split. The details are found in \cite[Remark 5.6]{ES19} and the references therein.
\end{example}

In \cite{Xin16}, Xin has given a classification of smooth minimal projective surfaces
with liftable Frobenius on $W_2(k)$ with some corrections made in \cite[Theorem 6.9]{AWZ23}. Among them, some (ordinary) hyperelliptic surface has a non-trivial canonical bundle, but its tangent bundle is numerically flat.

\subsection{Lifting of automorphisms}

In \cite{BT24}, Brantner and Taelman have succeeded in applying the method of derived algebraic geometry to prove an existence of flat lifting over the Witt vectors for a large class of smooth Calabi-Yau projective varieties in positive characteristic, including ordinary Calabi-Yau varieties. We prove a result which generalizes \cite[Theorem 4.5]{Sr17} (see also \cite[Proposition 2.6]{LT22}) to the higher dimensional case on the lifting of an automorphism.

\begin{proposition}
Let $X$ be a smooth projective variety defined over a perfect field $k$ of characteristic $p>0$ of dimension $d$ such that $X$ is geometrically integral over $k$. Assume $\omega_X \cong \mathcal{O}_X$ and the following conditions.
\begin{enumerate}
\item
$H^i(X,B\Omega_X^j)=0$ for all $i \ge 0$ and $j=0,1,2$ (Bloch-Kato $2$-ordinarity condition).

\item
$H^d_{et}(X_{\overline{k}},\mathbb{Z}_p)$ is $\mathbb{Z}_p$-torsion free.
\end{enumerate}
Then there is a projective scheme $\mathcal{X}$ flat over $W(k)$, called a canonical lifting of $X$ in the sense of \cite[Definition 8.31]{BT24}. Moreover, if $f:X \to Y$ is a $k$-isomorphism, then it lifts to a $W(k)$-isomorphism $\widetilde{f}:\mathcal{X} \to \mathcal{Y}$, where $\mathcal{X}$ (resp. $\mathcal{Y}$) is the canonical lifting of $X$ (resp. $Y$).
\end{proposition}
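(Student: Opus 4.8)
The plan is to obtain $\mathcal{X}$ directly from the work of Brantner--Taelman and to deduce the functoriality statement from the \emph{uniqueness} of the canonical lift together with a transport-of-structure argument. First I would observe that hypotheses (1) and (2), together with $\omega_X \cong \mathcal{O}_X$ and geometric integrality, are precisely the input of \cite[Definition 8.31]{BT24}: condition (1) is Bloch--Kato $2$-ordinarity and condition (2) is the required $\mathbb{Z}_p$-torsion-freeness of $H^d_{et}(X_{\overline{k}},\mathbb{Z}_p)$. Hence \cite{BT24} produces the projective flat $W(k)$-scheme $\mathcal{X}$, which is \emph{canonical} in the sense that it is characterised, up to isomorphism, by an intrinsic condition on the lift built functorially out of $X$, the trivialisation of $\omega_X$, and the relevant Hodge/crystalline cohomology; in particular it is unique.

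For the functoriality, let $f \colon X \to Y$ be a $k$-isomorphism. I would first check that $Y$ again satisfies all the hypotheses: $\omega_Y \cong (f^{-1})^*\omega_X \cong \mathcal{O}_Y$; the formation of $B\Omega_X^j$ from the de Rham complex and the absolute Frobenius is functorial, so $f^*B\Omega_Y^j \cong B\Omega_X^j$ and therefore $H^i(Y,B\Omega_Y^j) \cong H^i(X,B\Omega_X^j) = 0$ for all $i \ge 0$ and $j \le 2$; and $H^d_{et}(Y_{\overline{k}},\mathbb{Z}_p) \cong H^d_{et}(X_{\overline{k}},\mathbb{Z}_p)$ is torsion-free since $f \times_k \overline{k}$ is an isomorphism. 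Thus $Y$ admits a canonical lifting $\mathcal{Y}$. Now I would transport $\mathcal{X}$ along $f$: regarding $\mathcal{X}$ as a flat projective $W(k)$-scheme but identifying its special fibre with $Y$ by means of $f$ rather than by the original isomorphism, one obtains a flat projective lifting $\mathcal{X}_f$ of $Y$ over $W(k)$. Since the condition singling out canonical lifts in \cite{BT24} is natural with respect to isomorphisms of the special fibre, $\mathcal{X}_f$ is again canonical; by uniqueness, $\mathcal{X}_f \cong \mathcal{Y}$, and composing this with the tautological isomorphism $\mathcal{X} \cong \mathcal{X}_f$ of $W(k)$-schemes yields a $W(k)$-isomorphism $\widetilde{f} \colon \mathcal{X} \to \mathcal{Y}$ whose reduction modulo $p$ is $f$.

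Alternatively, and closer to the deformation-theoretic methods used elsewhere in this paper, one can lift $f$ step by step along the $p$-adic formal towers $\{X_n\}$ and $\{Y_n\}$ underlying $\mathcal{X}$ and $\mathcal{Y}$: because $f$ is an isomorphism and both towers are canonical, the obstruction to extending $f_n \colon X_n \to Y_n$ to $f_{n+1}$ is forced to vanish, and the resulting formal isomorphism algebraises, the schemes being projective, by \cite[Corollary 8.4.7]{I05} (or \citeSta{0A42}); that $\widetilde{f}$ is an isomorphism then follows by running the same construction for $f^{-1}$ and invoking uniqueness once more.

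I expect the only genuine point to verify to be the single input of the first approach, namely that the defining condition of the canonical lift in \cite[Definition 8.31]{BT24} is preserved under $k$-isomorphisms (equivalently, that it is intrinsic enough that transport of structure along $f$ carries a canonical lift of $X$ to a canonical lift of $Y$). This should be immediate from the construction, since that construction only uses functorial invariants of $X$; everything else above is either cited or a routine transport/algebraization argument.
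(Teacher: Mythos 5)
Your first (transport-of-structure) argument matches the paper's strategy: both proofs obtain $\mathcal{X}$ from Brantner--Taelman, transport it along $f$ to get a flat projective lifting of $Y$, and conclude by uniqueness of the canonical lift. The point you flag as ``the only genuine point to verify'' --- that the condition singling out the canonical lift in \cite[Definition 8.31]{BT24} is preserved under $k$-isomorphisms of the special fibre --- is in fact exactly what the paper spends most of its proof establishing, so it is not something one should simply assert as ``immediate from the construction.'' The paper does this carefully: it uses the Serre--Tate period domains $\mathrm{ST}_{X,\lambda}(-)$ of \cite[Definition 8.27]{BT24}, in which the canonical lift is characterised as the unit element; it then shows that $f^*$ induces an equivalence of period domains commuting with the truncation maps $W_{n+1}(k) \to W_n(k)$ and carrying the unit to the unit, whence $Y_n = Y_n'$. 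Without this computation, or some substitute for it, your argument has a gap. Note also that the paper, like you, first algebraizes the transported tower $\{Y_n'\}$ by pushing forward an ample line bundle along $f_n$ and invoking Grothendieck's existence theorem, and only afterwards identifies the result with $\mathcal{Y}$; your phrasing elides this step but it is the same move. Your alternative ``obstruction vanishes because both towers are canonical'' sketch is too vague to serve as a proof on its own --- the vanishing of the obstruction to lifting $f_n$ is not obvious from canonicity alone and is precisely what the Serre--Tate argument is supplying --- so that paragraph should be regarded as a heuristic rather than a second proof.
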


\begin{proof}
The existence and construction of $\mathcal{X}$ is \cite[Theorem C]{BT24}. For a given $f:X \cong Y$, we will build a lift over $W(k)$ using the Serre-Tate coordinates constructed in \cite{BT24}. We follow the ideas of the proof of \cite[Proposition 2.6]{LT22}. The deformation functors (more precisely, formal moduli problems) ${\mathrm{Def}}_{Y}$ and ${\mathrm{Def}}_X$ will provide an isomorphism of $p$-adic formal schemes $\widehat{f}:\{X_n\}_{n \ge 1} \cong \{Y_n'\}_{n \ge 1}$ specializing to the isomorphism $f:X = X_1 \cong Y = Y_1$ such that $\mathcal{X}$ is an algebraization of the canonical lifting $\mathcal{X}$.

Since $f_n:X_n \cong Y_n'$ is compatible with $f_{n+k}:X_{n+k} \cong Y_{n+k}'$ for all $k \ge 0$ and there is an ample line bundle $\{L_n\}_{n \ge 1}$ on $\{X_n\}_{n \ge 1}$, we can construct an ample line bundle $\{(f_n)_*L_n\}_{n \ge 1}$ on $\{Y_n'\}_{n \ge 1}$. So by applying \cite[Tag 089A]{Stacks}, there is a projective flat scheme $\mathcal{Y}'$ over $W(k)$ together with an isomorphism $\widetilde{f}:\mathcal{X} \cong \mathcal{Y}'$ such that $\widetilde{f}$ is an algebraization of $\widehat{f}$.

It remains to prove that $\mathcal{Y}'$ is the canonical lifting of $Y$ in the sense of \cite[Definition 8.31]{BT24}. To this aim, let $\{Y_n\}_{n \ge 0}$ be the $p$-adic formal scheme attachd to the canonical lifting $\mathcal{Y}$ of $Y$. Let ${\mathrm{ST}}_X(-)$ and ${\mathrm{ST}}_{X,\lambda}(-)$ be Serre-Tate period domains respectively, as introduced in \cite[Definition 8.27]{BT24}, where $\{\lambda:\mathbb{Z}[-1] \to \mathbb{G}_{m,X}\} \in {\mathrm{Mod}}(X_{et},\mathbb{Z})$ corresponds to a choice of an ample line bundle (see \cite[\S~2.6]{BT24} for the notation). By \cite[Definition 8.31]{BT24}, the pair $\{(X_n,L_n)\}_{n \geq 1}$ corresponds to the unit of ${\mathrm{ST}}_{X,\lambda}(W_n(k))$. Since the Serre-Tate period domain is functorial, we have a commutative diagram
$$
\vcenter{
\xymatrix@M=10pt{ 
{\mathrm{ST}}_{Y,f_*(\lambda)}(W_{n+1}(k)) \ar[d]^{f^*} \ar[r] &{\mathrm{ST}}_{Y,f_*(\lambda)}(W_{n}(k))  \ar[d]^{f^*}\\
{\mathrm{ST}}_{X,\lambda}(W_{n+1}(k)) \ar[r] & {\mathrm{ST}}_{X,\lambda}(W_{n}(k)) \\
}}
$$
where the horizontal map is induced by the trancation $W_{n+1}(k) \to W_n(k)$ and the unit element of ${\mathrm{ST}}_{Y,f^*(\lambda)}(W_{n}(k))$ is $(Y_n',(f_n)_*L_n)$. Since $f^*$ is an equivalence, which sends the unit to the unit, it follows that $Y_n=Y_n'$. So we conclude that $\mathcal{Y}'$ is the canonical lifting of $Y$, as desired.
\end{proof}



\begin{equation*}
\end{equation*}

\end{document}